\title{On non-projective complete toric varieties}
\author{Osamu Fujino and Hiroshi Sato} 
\date{2025/7/12, version 0.27}
\address{Department of 
Mathematics, Graduate School of Science, 
Kyoto University, Kyoto 606-8502, Japan}
\email{fujino@math.kyoto-u.ac.jp}
\address{Department of Applied Mathematics, 
Faculty of Sciences, Fukuoka University, 
8-19-1, Nanakuma, Jonan-ku, Fukuoka 814-0180, Japan}
\email{hirosato@fukuoka-u.ac.jp}
\keywords{non-projective varieties, toric varieties, minimal model program, 
flips, flops, anti-flips}
\subjclass[2020]{Primary 14M25; Secondary 14E30}
\dedicatory{Dedicated to Professor Tadao Oda on 
the occasion of his eighty-fifth birthday}
\newcommand{\Supp}[0]{{\operatorname{Supp}}}
\newcommand{\Proj}[0]{{\operatorname{Proj}}}
\newcommand{\Spec}[0]{{\operatorname{Spec}}}
\newcommand{\Exc}[0]{{\operatorname{Exc}}}
\newcommand{\NE}[0]{{\operatorname{NE}}}
\newcommand{\Bs}[0]{{\operatorname{Bs}}}
\newcommand{\G}[0]{{\operatorname{G}}}
\def\<{\ensuremath{\langle}}
\def\>{\ensuremath{\rangle}}
\newtheorem{thm}{Theorem}[section]
\newtheorem{cor}[thm]{Corollary}
\newtheorem{lem}[thm]{Lemma}
\newtheorem{conj}[thm]{Conjecture}
\theoremstyle{definition}
\newtheorem{defn}[thm]{Definition}
\newtheorem*{ack}{Acknowledgments}
\newtheorem{prob}[thm]{Problem} 
\newtheorem{rem}[thm]{Remark}
\newtheorem{ex}[thm]{Example}
\begin{document}

\maketitle 

\begin{abstract}
For every complete toric variety, 
there exists a projective toric variety which is isomorphic 
to it in codimension one. In this paper, 
we show that every smooth non-projective complete toric threefold 
of Picard number at most five becomes projective after a finite succession 
of flops or anti-flips. 
\end{abstract}

\tableofcontents 

\section{Introduction}\label{p-sec1}

Throughout this paper, we will work over an algebraically 
closed field $k$ of arbitrary characteristic. 
To the best knowledge of the authors, the following 
statement is not stated in the standard literature. 
Hence we state it here for the sake of completeness. 

\begin{thm}\label{p-thm1.1}
Let $X$ be a complete toric variety. 
Then we can always construct a projective 
$\mathbb Q$-factorial toric variety $X'$ which is 
isomorphic to $X$ in codimension one. 
\end{thm}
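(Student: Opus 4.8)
The plan is to argue entirely with fans. Write $X=X(\Sigma)$ for a complete fan $\Sigma$ in $N_{\mathbb R}=N\otimes_{\mathbb Z}\mathbb R$ with $N\cong\mathbb Z^{n}$, and let $v_{1},\dots,v_{m}$ be the primitive generators of the rays $\Sigma(1)$; completeness of $\Sigma$ means these vectors positively span $N_{\mathbb R}$. I will use one elementary observation: if $\Sigma'$ is \emph{any} complete fan with $\Sigma'(1)=\Sigma(1)$, then $X(\Sigma')$ is isomorphic to $X$ in codimension one. Indeed, by the orbit--cone correspondence both varieties contain the open toric subvariety $\bigcup_{\rho\in\Sigma(1)}U_{\rho}$ attached to the subfan consisting of the rays together with the origin, and in either variety the complement of this open set is a union of torus orbits of codimension $\ge 2$. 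Hence it suffices to construct a \emph{projective} and \emph{simplicial} complete fan $\Sigma'$ with $\Sigma'(1)=\Sigma(1)$: the corresponding $X'=X(\Sigma')$ is then automatically projective and $\mathbb Q$-factorial.

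First I would produce $\Sigma'$ as the normal fan of an explicit polytope, so that projectivity comes for free. Fix a Euclidean norm on $N_{\mathbb R}$ and set $w_{i}:=v_{i}/\|v_{i}\|$; the $w_{i}$ are pairwise distinct points on the unit sphere, they still positively span $N_{\mathbb R}$, and $\mathbb R_{\ge 0}w_{i}=\mathbb R_{\ge 0}v_{i}$. Let $Q:=\operatorname{conv}(w_{1},\dots,w_{m})$. Positive spanning forces $0$ into the interior of $Q$, and since each $w_{i}$ lies on the unit sphere while $Q$ is contained in the closed unit ball, every $w_{i}$ is an extreme point, hence a vertex, of $Q$. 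Therefore the polar dual $Q^{\circ}=\{u\in M_{\mathbb R}:\langle u,w_{i}\rangle\le 1\text{ for all }i\}$ is a full-dimensional polytope all of whose $m$ defining inequalities are facet-defining, so its normal fan is a complete projective fan whose set of rays is precisely $\{\mathbb R_{\ge 0}v_{i}\}=\Sigma(1)$. This already gives a projective (possibly non-$\mathbb Q$-factorial) model isomorphic to $X$ in codimension one.

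To upgrade this to a simplicial fan without losing any ray, I would perturb the right-hand sides. For $c=(c_{1},\dots,c_{m})\in\mathbb R^{m}_{>0}$ put $R(c):=\{u\in M_{\mathbb R}:\langle u,w_{i}\rangle\le c_{i}\text{ for all }i\}$. The set of $c$ for which all $m$ inequalities stay facet-defining is open and contains $(1,\dots,1)$, and for generic such $c$ no $n+1$ of the affine hyperplanes $\{\langle -,w_{i}\rangle=c_{i}\}$ have a common point, since solvability of any such overdetermined linear system imposes a non-trivial linear condition on $c$. For any $c$ in this nonempty intersection, $R(c)$ is a \emph{simple} polytope whose facet normals are exactly $w_{1},\dots,w_{m}$; hence $\Sigma':=$ its normal fan is a complete projective simplicial fan with $\Sigma'(1)=\Sigma(1)$, and $X':=X(\Sigma')$ is the required variety. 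Nothing here uses the characteristic of $k$.

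The main point demanding care is this last step. One must check that ``all $m$ inequalities remain facet-defining'' is genuinely an open condition near $(1,\dots,1)$ --- a small deformation of a polytope can in principle collapse a small facet, so one argues that each facet of $Q^{\circ}$ already carries a relative-interior point lying on no other facet hyperplane and therefore persists under small perturbation --- and one must check that non-concurrency of $n+1$ facet hyperplanes really forces simplicity, which holds because a vertex of $R(c)$ lies on more than $n$ facets exactly when some $n+1$ of the hyperplanes meet. The remaining ingredient, the ``same set of rays $\Rightarrow$ isomorphic in codimension one'' principle used at the outset, is soft and follows from the orbit--cone correspondence.
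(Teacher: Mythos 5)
Your argument is correct, but it is a genuinely different proof from the one in the paper. The paper deduces Theorem \ref{p-thm1.1} from the relative statement (Theorem \ref{p-thm1.3}), which it proves with the toric minimal model program: it passes to a $\mathbb Q$-factorialization, resolves via Chow's lemma, runs a $D$-MMP for $D=p^*H+E$, and uses the negativity lemma to see that all $p$-exceptional divisors are contracted, so that the end product is isomorphic to $X$ in codimension one. You instead reduce everything to the classical convex-geometric fact that any finite set of rational rays positively spanning $N_{\mathbb R}$ is the ray set of a complete projective simplicial fan, obtained as the normal fan of a generic simple polytope with prescribed facet normals (equivalently, a regular triangulation of the vector configuration), together with the soft observation that equality of ray sets gives isomorphism in codimension one. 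Your route is more elementary and completely explicit, and it avoids the MMP entirely; the paper's route is longer but proves the stronger relative statement over an arbitrary base $Y$ and sets up the Mori-theoretic framework (flips, contractions, $\mathbb Q$-factorializations) that the rest of the paper depends on. Two small points to polish in your write-up: with the inner-normal convention for normal fans used in \cite{cls} and \cite{oda2}, the normal fan of $Q^{\circ}$ has rays generated by $-w_i$, so you should either use the outer-normal convention explicitly or replace $Q^{\circ}$ by $-Q^{\circ}$ (equivalently, define it by $\langle u,w_i\rangle\ge -c_i$); and the polytopes $R(c)$ need not be rational, so you should either note that their normal fans are nevertheless rational (each cone is generated by a subset of the rational rays $\mathbb R_{\ge 0}v_i$) and admit a strictly convex rational support function because strict convexity is an open condition on the rational vector space of $\Sigma'$-linear support functions, or simply take $c_i\in\mathbb Q_{>0}\cdot\|v_i\|^{-1}$ so that $R(c)$ is a rational polytope from the start; genericity can be achieved within such rational choices. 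Neither point is a gap.
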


Theorem \ref{p-thm1.1} was inspired by Koll\'ar's problem in 
\cite{kollar}. 

\begin{prob}[{\cite[5.2.2.~Problem]{kollar}}]\label{p-prob1.2} 
Let $X$ be a proper algebraic threefold 
(smooth or with mild singularities). Can one find 
a (possibly very singular) projective variety $X^+$ such that $X$ and $X^+$ 
are isomorphic in codimension one? 
This means that there are subsets $B\subset X$ and $B^+\subset X^+$ 
and an
isomorphism $X\setminus B\simeq X^+\setminus B^+$ 
such that $\dim B\leq 1$, $\dim B^+\leq 1$.
\end{prob}

More generally, in this paper, we prove the following relative statement. 
It is obvious that if we put $Y=\Spec\,k$ in 
Theorem \ref{p-thm1.3} 
then we can recover Theorem \ref{p-thm1.1} as a special case. 

\begin{thm}\label{p-thm1.3}
Let $f\colon X\to Y$ be a proper surjective toric morphism. 
Then we can construct a commutative diagram 
\begin{equation*}
\xymatrix{
X\ar[dr]_-{f}\ar@{-->}[rr]^\phi& & X'\ar[dl]^-{f'} \\ 
&Y&
}
\end{equation*}
such that 
\begin{itemize}
\item[(i)] $f'\colon X'\to Y$ is a projective 
surjective toric morphism, 
\item[(ii)] $X'$ is $\mathbb Q$-factorial, and 
\item[(iii)] $\phi$ is a toric birational map which is an 
isomorphism in codimension one. 
\end{itemize}
\end{thm}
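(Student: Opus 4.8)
The plan is to work combinatorially with fans and to realize $X'$ as a toric variety whose fan $\Delta'$ has exactly the same rays and the same support as the fan $\Delta$ of $X$; this forces the induced birational map $\phi$ to be an isomorphism in codimension one, and it then remains to arrange $\Delta'$ to be simplicial (so $X'$ is $\mathbb{Q}$-factorial) and to carry a support function that is strictly convex relative to $f$ (so $f'$ is projective). Note that $\Delta'$ will in general \emph{not} refine $\Delta$: if $X$ is already $\mathbb{Q}$-factorial but not projective over $Y$, no refinement with the same rays can help and $\phi$ must be a genuine flip-type map, so $X'$ must be built by hand.

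Write $\pi_{\mathbb{R}}\colon N_{\mathbb{R}}\to N'_{\mathbb{R}}$ for the surjective linear map induced by $f$, and $\Delta$, $\Sigma$ for the fans of $X$, $Y$; by properness $|\Delta|=\pi_{\mathbb{R}}^{-1}(|\Sigma|)$, and $\pi_{\mathbb{R}}$ carries each cone of $\Delta$ into a cone of $\Sigma$. Let $v_1,\dots,v_n$ be the primitive ray generators of $\Delta$, and for each maximal $\sigma\in\Sigma$ set $C_\sigma:=\pi_{\mathbb{R}}^{-1}(\sigma)$; then $C_\sigma$ is a full-dimensional convex cone positively spanned by those $v_i$ lying in it, the $C_\sigma$ cover $|\Delta|$, and $C_\sigma\cap C_{\sigma'}=\pi_{\mathbb{R}}^{-1}(\sigma\cap\sigma')$ is a common face of $C_\sigma$ and $C_{\sigma'}$. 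Now choose generic rational numbers $a_1,\dots,a_n$. For each maximal $\sigma$ let $\psi_\sigma\colon C_\sigma\to\mathbb{R}$ be the support function of the polyhedron $P_\sigma=\{u\in M_{\mathbb{R}}:\langle u,v_i\rangle\le a_i\text{ for all }v_i\in C_\sigma\}$; it is finite precisely on $C_\sigma$, piecewise linear there, strictly convex across each of its walls interior to $C_\sigma$, and its domains of linearity form a simplicial subdivision of $C_\sigma$ whose rays are among the $v_i$. Using that $C_\sigma\cap C_{\sigma'}$ is a face of each of $C_\sigma$, $C_{\sigma'}$ --- so that, by the face property of cones, only the $v_i$ lying in that face contribute to $\psi_\sigma$ and to $\psi_{\sigma'}$ along it --- one checks $\psi_\sigma=\psi_{\sigma'}$ on the overlap, so the $\psi_\sigma$ glue to a single piecewise linear function $\psi$ on $|\Delta|$. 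Let $\Delta'$ be the fan of its domains of linearity, $X'$ the corresponding toric variety, and $\phi\colon X\dashrightarrow X'$, $f'\colon X'\to Y$ the induced maps.

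Then I would verify the three conclusions. For generic $a_i$ the fan $\Delta'$ is simplicial, so $X'$ is $\mathbb{Q}$-factorial, which is (ii). One must also choose the $a_i$ so that \emph{every} $v_i$ actually occurs as a ray of $\Delta'$; granting this, $\Delta$ and $\Delta'$ have the same rays and the same support, so $\phi$ and $\phi^{-1}$ fail to be isomorphisms only along orbit closures of cones that get subdivided, all of dimension $\ge 2$, giving (iii). The morphism $f'$ is toric, and proper because $|\Delta'|=|\Delta|=\pi_{\mathbb{R}}^{-1}(|\Sigma|)$; it is surjective because $X'$ and $X$ share the torus $T_N$ and $f$ is surjective; and $\psi$, being strictly convex on each $C_\sigma$ in the sense above, is the support function of an $f'$-ample $\mathbb{Q}$-Cartier divisor on the $\mathbb{Q}$-factorial variety $X'$, so $f'$ is projective. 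This establishes (i) and hence the theorem; taking $Y=\Spec k$ recovers Theorem \ref{p-thm1.1}.

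The step I expect to be the main obstacle is the \emph{relative} convexity. When $|\Sigma|\subsetneq N'_{\mathbb{R}}$ the support $|\Delta|$ is typically non-convex --- and, when $\dim X>\dim Y$, even contains a linear subspace --- so there is no globally strictly convex support function and one cannot simply invoke the existence of projective simplicial subdivisions in the absolute (complete) case. One must instead check that a single generic height assignment produces cone-wise regular subdivisions of the $C_\sigma$ that are mutually compatible along the faces $\pi_{\mathbb{R}}^{-1}(\sigma\cap\sigma')$, and that the glued support function is ample \emph{relative to $f'$} in the correct sense. Alongside this, the more elementary but still non-vacuous point is that the heights can be chosen --- generically, though not completely arbitrarily --- so that no ray of $\Delta$ is lost in $\Delta'$; everything else is routine bookkeeping with support functions.
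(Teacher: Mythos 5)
Your construction is essentially correct, but it takes a genuinely different route from the paper. The paper's proof is Mori-theoretic and deliberately avoids combinatorics: after a small projective $\mathbb Q$-factorialization it takes a smooth quasi-projective resolution $p\colon V\to X$, runs the $D$-MMP over $Y$ for $D=p^*H+E$ with $E$ supported on $\Exc(p)$, uses finite generation of $\bigoplus_m (f\circ p)_*\mathcal O_V(mD)$ and the negativity lemma to show that every $p$-exceptional divisor is contracted along the way, and then corrects the resulting $\Proj$ by a small $\mathbb Q$-factorialization and, if necessary, by extracting lost divisors. Your argument instead builds $X'$ directly as a regular (coherent) simplicial subdivision of $|\Delta|$ with the \emph{same} rays as $\Delta$, glued from the normal fans of the polyhedra $P_\sigma$ over the cones $C_\sigma=\pi_{\mathbb R}^{-1}(\sigma)$; this is the relative version of the classical ``projective simplicial subdivision'' construction, sharpened so that the ray set is preserved, and the face-compatibility argument you sketch (only the $v_i$ on a face of $C_\sigma$ control the support function along that face) is exactly the standard face property of regular subdivisions and does work here because $\pi_{\mathbb R}^{-1}(\sigma\cap\sigma')$ is an exposed face of each $C_\sigma$. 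What your approach buys is a completely self-contained, constructive proof that needs neither finite generation nor the negativity lemma; what the paper's approach buys is independence from fan combinatorics and a template that fits the rest of their MMP framework. Two points in your write-up deserve a real argument rather than the word ``generic'': (a) genericity of the heights alone does \emph{not} ensure that every $v_i$ survives as a ray of $\Delta'$ (if $v_i=\sum_{j\ne i}\lambda_jv_j$ with $\lambda_j\ge 0$ and $a_i\ge\sum_j\lambda_ja_j$, the $i$-th constraint is redundant and $\phi$ would contract $D_i$); you need heights that are ``strictly convex at each ray,'' e.g.\ rational approximations of $a_i=|v_i|$, for which the triangle inequality is strict because distinct primitive generators are never positive multiples of one another, and then a small generic perturbation to achieve simpliciality without destroying these strict inequalities; (b) you should cite or prove the relative projectivity criterion (a toric morphism $X_{\Delta'}\to X_\Sigma$ is projective if there is a $\Delta'$-linear support function strictly convex on each $\pi_{\mathbb R}^{-1}(\sigma)$), since the paper only records the absolute version in Lemma \ref{p-lem2.7}. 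Also, $C_\sigma$ need not be full-dimensional when $\sigma$ is a lower-dimensional maximal cone of $\Sigma$, though nothing in your argument actually uses full-dimensionality.
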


From the toric Mori theoretic viewpoint (see \cite{fujino},  
\cite{fujino-sato}, \cite{fujino-equivariant}, and so on), 
Theorem \ref{p-thm1.3} is natural and is not difficult to 
prove. 
Hence it is natural to pose the following conjecture, which was inspired 
by \cite[5.2.~Projectivization with flip or flop]{kollar}. 

\begin{conj}\label{p-conj1.4}
Let $X$ be a complete $\mathbb Q$-factorial toric 
variety. Then there exists a finite sequence consisting of flips, 
flops, or anti-flips 
\begin{equation*}
X=:X_0\dashrightarrow X_1\dashrightarrow \cdots 
\dashrightarrow X_m=:X'
\end{equation*} 
such that $X'$ is a projective $\mathbb Q$-factorial 
toric variety. 
\end{conj}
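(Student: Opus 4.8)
The plan is to realize the desired sequence as a single run of the toric minimal model program for a carefully chosen movable divisor, using Theorem \ref{p-thm1.1} to supply the target in advance. First I would apply Theorem \ref{p-thm1.1} to $X$ to obtain a projective $\mathbb{Q}$-factorial toric variety $X'$ and a toric birational map $\phi\colon X\dashrightarrow X'$ that is an isomorphism in codimension one. Since a small toric birational map between complete toric varieties preserves the set of rays, $X$ and $X'$ have the \emph{same} one-dimensional cones, hence the same torus-invariant prime divisors. Fixing an ample divisor $H'=\sum a_iD'_i$ on $X'$, I let $H=\sum a_iD_i$ be the divisor on $X$ with the same coefficients on the corresponding invariant divisors. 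Because $X$ is $\mathbb{Q}$-factorial, $H$ is $\mathbb{Q}$-Cartier; and because $\phi$ is an isomorphism in codimension one with $H'$ ample, $H$ is big and movable on $X$, and $X'$ is precisely its ample model. The goal then becomes: run an $H$-MMP from $X$ and show that it reaches $X'$.

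Next I would check that every step is a flip, flop, or anti-flip in the sense of the conjecture, that is, a small modification. By the toric cone and contraction theorems, valid for an arbitrary $\mathbb{R}$-divisor on a complete $\mathbb{Q}$-factorial toric variety (see \cite{fujino}, \cite{fujino-sato}), if $H$ is not nef there is an $H$-negative extremal ray of $\overline{\NE}(X)$, contracted by a toric morphism $\varphi\colon X\to Z$. This contraction cannot be of fiber type, since a movable divisor is nonnegative on any covering family of curves; and it cannot be divisorial, since a divisorial $H$-negative contraction would force the contracted divisor into the stable base locus of $H$ (excluded by movability) and would in any case delete a ray, whereas $X'$ retains all rays of $X$. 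Hence $\varphi$ is small, the induced $H$-flip $X\dashrightarrow X^{+}$ is an isomorphism in codimension one, and it is a flip, flop, or anti-flip according to whether $K_X\cdot C$ is negative, zero, or positive on the contracted curve $C$—each of which is permitted. Bigness and movability of $H$ persist under the flip, so the program continues with the same properties.

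The hard part—and the reason the statement is posed as a conjecture rather than a theorem—is termination of this $H$-MMP, and this is where I expect the real work to lie. My plan rests on two toric features. First, every variety occurring in the program is isomorphic in codimension one to $X$ and hence shares its fixed finite set of rays; since a simplicial fan is determined by which subsets of the rays span its maximal cones, only finitely many toric varieties can possibly occur. Second, to each fan $\Sigma_i$ I would attach the support function $h_i$ of $H$, the piecewise-linear function that is linear on every maximal cone of $\Sigma_i$ and takes the prescribed values on the rays; then $H$ is nef on $X_i$ if and only if $h_i$ equals the convex envelope $\bar h$ of these ray data, which is itself the support function of $H'$ on $X'$. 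An $H$-flip is local to the star of the contracted wall, where it exchanges the two interpolations of the ray values across that wall; because $H\cdot C<0$ strictly, the flip moves $h_i$ strictly toward $\bar h$ on that star and leaves it unchanged elsewhere. Thus the $h_i$ are pairwise distinct and move monotonically toward $\bar h$, and since only finitely many are available the program stops, necessarily at the fan with $h_i=\bar h$. Turning this sketch into a proof requires a careful local convexity analysis at each contraction—particularly when the extremal contraction collapses a higher-dimensional locus rather than a single wall—which is the technical heart of the argument.

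Assembling the steps, the program produces
\begin{equation*}
X=X_0\dashrightarrow X_1\dashrightarrow\cdots\dashrightarrow X_m=X',
\end{equation*}
a finite sequence of flips, flops, and anti-flips terminating on a projective $\mathbb{Q}$-factorial toric variety. That the terminal object is genuinely $X'$, and not merely some auxiliary nef model, follows from uniqueness of the ample model together with the fact that a proper birational toric morphism between $\mathbb{Q}$-factorial complete toric varieties with the same rays must be an isomorphism, since a simplicial cone admits no subdivision without introducing a new ray.
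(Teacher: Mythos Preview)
The statement you are trying to prove is posed in the paper as an open conjecture, not a theorem; there is no proof of it in the paper to compare against. In fact, the paragraph immediately following Conjecture~\ref{p-conj1.4} sketches \emph{precisely} your strategy---pull back an ample divisor $H'$ from the projective model $X'$ supplied by Theorem~\ref{p-thm1.1}, run an $H$-MMP on $X$, and land on $X'$---and then explains why the authors cannot carry it out: ``Unfortunately, however, we do not know whether we can run the minimal model program for non-projective complete toric varieties or not. Therefore, Conjecture~\ref{p-conj1.4} is open.''

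The concrete gap in your argument is the invocation of ``the toric cone and contraction theorems, valid for an arbitrary $\mathbb{R}$-divisor on a complete $\mathbb{Q}$-factorial toric variety.'' The references you cite (\cite{fujino}, \cite{fujino-sato}) establish these theorems for toric varieties that are projective, or projective over a base; they do not cover the absolute non-projective case. On a non-projective complete toric variety the Kleiman--Mori cone $\NE(X)$ need not be polyhedral or even strongly convex: the paper recalls (Remark~\ref{p-rem4.2}, \cite{fujino-payne}) examples with $\NE(X)=\mathbb{R}^5$, so there are no extremal rays in the usual sense and no ample class to cut out faces. Even if you locate an $H$-negative curve class, producing a projective contraction morphism $\varphi\colon X\to Z$ with $\rho(X/Z)=1$ contracting exactly that class is the missing ingredient, and your proposal does not supply it. Your termination sketch via support functions is interesting but moot until the existence of each step is secured. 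The paper's actual contribution toward the conjecture (Theorem~\ref{p-thm1.5}) bypasses this obstacle entirely: it verifies the conjecture for smooth complete toric threefolds with $\rho\le 5$ by exhaustive case analysis of Oda's classification, exhibiting explicit flops or anti-flips in each non-projective case.
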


For the precise definition of flips, flops, and anti-flips, 
see Definition \ref{p-def2.2} below. The following 
observation seems to be more or 
less natural to the experts of the minimal model program.  

\medskip 

Let $X$ be a complete $\mathbb Q$-factorial toric variety. 
Then, by Theorem \ref{p-thm1.1}, we can take a projective 
$\mathbb Q$-factorial toric variety $X'$ which is isomorphic to 
$X$ in codimension one. We take an ample Cartier divisor $H'$ on $X'$. 
Let $H$ be the strict transform of $H'$ on $X$. 
Then $H$ is movable since $X$ is isomorphic to $X'$ in codimension one. 
Suppose that we can run the $H$-minimal model program. Then 
we have a finite sequence of flips 
\begin{equation*}
X=:X_0\dashrightarrow X_1\dashrightarrow \cdots 
\dashrightarrow X_m=:X^\dag
\end{equation*} 
such that $H^\dag$, which is the pushforward of $H$ on $X^\dag$, is nef and 
big. Note that every nef $\mathbb Q$-Cartier divisor on 
a complete toric variety is semiample. 
Hence we have a birational morphism 
\begin{equation*}
\Phi_{|lH^\dag|}\colon X^\dag
\to X'\simeq \Proj\bigoplus _{m=0}^\infty H^0(X', \mathcal O_{X'}(mH'))
\end{equation*} for some sufficiently large and divisible positive integer $l$. 
Since $X'$ is $\mathbb Q$-factorial and $X^\dag$ 
is isomorphic to $X'$ in codimension one, 
$\Phi_{|lH^\dag|}\colon X^\dag\to X'$ is an isomorphism. 
Then $X^\dag$ is projective. 
In particular, Conjecture \ref{p-conj1.4} holds. 
Unfortunately, however, we do not know whether we can run 
the minimal model program for non-projective complete 
toric varieties or not. 
Therefore, Conjecture \ref{p-conj1.4} is open. 
Now we have no good ideas to formulate the toric Mori 
theory for non-projective complete toric varieties.

\medskip 

One of the main results of this paper is as follows. 

\begin{thm}[see Theorem \ref{p-thm4.9}]\label{p-thm1.5}
Conjecture \ref{p-conj1.4} holds true for any smooth complete 
toric threefold of Picard number at most five. 
\end{thm}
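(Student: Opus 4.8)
The plan is to reduce, via the classification of smooth complete toric threefolds of small Picard number, to a finite list of fans, and then to resolve the non-projectivity of each by an explicit short sequence of flops and anti-flips. First, some bookkeeping: for a complete toric threefold $X$ one has $\rho(X)=r-3$, where $r$ is the number of rays of the fan $\Sigma$ of $X$, and cutting $\Sigma$ with a sphere around the origin presents $\Sigma$ as a geodesic triangulation of $S^2$ whose vertices are the primitive ray generators; hence $\rho(X)\le 5$ forces $r\le 8$. It is well known that every smooth complete toric variety of Picard number at most three is projective, and there is nothing to prove when $X$ is already projective, so we may and do assume $\rho(X)\in\{4,5\}$ and $X$ non-projective. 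By Definition~\ref{p-def2.2} and the description of small toric contractions in dimension three, a flip, flop, or anti-flip of a $\mathbb{Q}$-factorial toric threefold is a \emph{diagonal flip} at a wall of crossing type: a wall $\langle v_3,v_4\rangle$ lying in two maximal cones $\langle v_1,v_3,v_4\rangle$, $\langle v_2,v_3,v_4\rangle$ whose four rays satisfy a relation $\alpha_1 v_1+\alpha_2 v_2=\alpha_3 v_3+\alpha_4 v_4$ with all $\alpha_i>0$ is replaced by $\langle v_1,v_2,v_3\rangle$, $\langle v_1,v_2,v_4\rangle$ sharing the new wall $\langle v_1,v_2\rangle$. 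When $X$ is smooth this relation can be taken with $\alpha_1=\alpha_2=1$ and $\alpha_3,\alpha_4$ positive integers, whence $K_X\cdot C=\alpha_3+\alpha_4-2\ge 0$ for the curve $C$ of the wall; thus every diagonal flip available on a smooth complete toric threefold is a flop (if $\alpha_3=\alpha_4=1$) or an anti-flip (otherwise), never a flip — which accounts for the phrasing of the statement.

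\emph{Classification step.} One lists, up to isomorphism of toric varieties, the finitely many smooth non-projective complete toric threefolds with $7$ or $8$ rays; this is extracted from the classification of smooth complete toric threefolds of Picard number four and five (for $\rho(X)=4$ a short list, containing Oda's classical example; for $\rho(X)=5$ a longer but still explicit one). For each entry, record the primitive ray generators and the maximal cones of $\Sigma$.

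\emph{Resolving non-projectivity.} Fix an entry $\Sigma$. By Theorem~\ref{p-thm1.1} there is a projective fan $\Sigma'$ with the same rays as $\Sigma$; pick an ample torus-invariant $\mathbb{Q}$-Cartier divisor $H'$ on the corresponding $X'$ and let $h'$ be its support function (strictly convex on $\Sigma'$), now regarded as a piecewise-linear function on $\Sigma$. The walls of $\Sigma$ across which $h'$ is strictly concave form the obstruction to $h'$ being convex on $\Sigma$, hence to $X$ being projective. One checks, for each of the finitely many fans, that there is always such a wall of crossing type, that flopping or anti-flipping there yields a $\mathbb{Q}$-factorial complete toric threefold on which $h'$ is ``less concave'', and that after finitely many such moves $h'$ becomes convex on the current fan $\Sigma_m$. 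Then $h'$ (after clearing denominators) is the support function of a nef and big, hence semiample, Cartier divisor on $X_m$; as in the discussion preceding Theorem~\ref{p-thm1.5}, the associated birational morphism has image $X'$ and, since $X'$ is $\mathbb{Q}$-factorial and $X_m$ is isomorphic to $X'$ in codimension one, it is an isomorphism. So $X_m\cong X'$ is projective, and Conjecture~\ref{p-conj1.4} holds for $X$. Together with the trivial cases dispatched above, this proves Theorem~\ref{p-thm1.5}.

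\emph{The main obstacle} is the step just described. One must navigate, in the flip graph of the (few) triangulations of $S^2$ with the given rays, from $\Sigma$ to a regular (i.e.\ projective) one using only \emph{geometrically realizable} flips, and nothing formal guarantees this is possible: a geometric flip genuinely alters the fan near the wall and may create new concave walls, and in general one does not know that a non-projective fan can be connected to a projective one in this way — this is precisely the content of Conjecture~\ref{p-conj1.4}. The hypothesis $\rho(X)\le 5$ enters only through the bound $r\le 8$, which keeps the classification list finite and each fan small enough that the finitely many flops and anti-flips needed can be written down and checked directly; verifying that the enumeration of non-projective fans is exhaustive is a secondary, routine point.
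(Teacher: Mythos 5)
Your overall strategy---reduce to the finite classification of smooth complete toric threefolds with at most eight rays and then treat each non-projective fan explicitly---is indeed the approach of the paper, and your preliminary reductions (projectivity for $\rho\le 3$, absence of genuine flips on smooth toric threefolds, the description of flops/anti-flips as wall modifications) are all correct. But the proposal has a genuine gap exactly where you flag ``the main obstacle'': the step that does all the work is never carried out. You propose to fix a projective fan $\Sigma'$ with the same rays, take the strictly convex support function $h'$ of an ample divisor on $X'$, and repeatedly flop or anti-flip at walls of $\Sigma$ where $h'$ is concave until $h'$ becomes convex. This is precisely the $H$-minimal model program guided by the strict transform of an ample divisor, and the paper explicitly points out (in the discussion after Conjecture \ref{p-conj1.4}) that it is not known how to run such an MMP starting from a \emph{non-projective} complete toric variety; there is no a priori guarantee that a concave wall of crossing type exists at each stage, that the modification decreases any measure of non-convexity, or that the process terminates. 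Asserting ``one checks, for each of the finitely many fans, that \dots after finitely many such moves $h'$ becomes convex'' is a restatement of what must be proved, not a proof.

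The paper avoids this issue entirely by abandoning any support-function-guided search: for each of Oda's eleven types with $\rho=5$ that do not blow down to smooth threefolds ([8-2], [8-5$'$], [8-5$''$], [8-8], [8-10], [8-11], [8-12], [8-13$'$], [8-13$''$], [8-14$'$], [8-14$''$]), plus the blow-ups of the non-projective $\rho=4$ example, it writes down one explicit flop (or, for [8-13$''$], two anti-flips/flops) read off from a specific primitive relation, and then proves projectivity of the result not by checking convexity of a function but by exhibiting a chain of divisorial contractions (star subdivisions read backwards, Lemma \ref{p-lem2.10}) terminating in a smooth or $\mathbb{Q}$-factorial variety of Picard number at most three, which is automatically projective. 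That concrete, case-by-case verification---including the parameter analysis for the families $Z'_5(a)$, $Z'_{13}(a,b)$, $Z''_{13}(a,b,c,d)$, $Z''_{14}(a,b)$, where projectivity or non-projectivity depends on the parameters---is the entire content of the theorem, and it is absent from your proposal. To complete your argument you would either have to supply those computations or prove a general termination statement for your $h'$-guided procedure, and the latter would essentially settle Conjecture \ref{p-conj1.4} in this setting by a method the authors state they do not know how to implement.
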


Since every complete toric surface is projective, 
Theorem \ref{p-thm1.5} is a first non-trivial case of Conjecture 
\ref{p-conj1.4} (see Remark \ref{p-rem1.6} below). 

\begin{rem}\label{p-rem1.6}
It is well known that every smooth complete toric variety 
with Picard number at most three is always projective in 
any dimension (see \cite{ks}). 
Similarly, every $\mathbb Q$-factorial complete 
toric variety with Picard number at most two is projective 
(see \cite{rt}). Note that the fan $\Sigma$ in 
\cite[Example 1]{fujino-payne} defines a $\mathbb Q$-factorial 
non-projective complete toric threefold with 
Picard number three and no nontrivial 
nef line bundles. We also note that 
there is a smooth non-projective complete toric threefold 
with Picard number four (see Example \ref{p-ex4.1} below). 
\end{rem}

In Section \ref{p-sec4}, we will prove 
Theorem \ref{p-thm1.5} by using Oda's classification 
table of smooth complete toric threefolds with 
Picard number at most five in \cite{oda1} and \cite{oda2}. 
We think that the most interesting non-projective 
complete toric threefold with Picard number five 
is the one labeled as 
[8-13$''$] in \cite{oda1}. 
As a byproduct of the proof of Theorem 
\ref{p-thm1.5}, we have the following result. 

\begin{thm}[{see Corollary \ref{p-cor5.4}}]\label{p-thm1.7}
There exists a smooth non-projective complete toric threefold 
$X$ with Picard number five satisfying the following property. 
If $X\dashrightarrow X'$ is a toric birational map which is 
an isomorphism in codimension one to a projective toric variety $X'$, then 
$X'$ must be singular. 
\end{thm}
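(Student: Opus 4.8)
The plan is to take for $X$ the smooth complete toric threefold of Picard number five labelled [8-13$''$] in Oda's table, whose fan $\Sigma$ has eight primitive ray generators and for which the analysis carried out in the proof of Theorem~\ref{p-thm1.5} in Section~\ref{p-sec4} already records the relevant geometry. The first step is the standard reduction: if a toric birational map $\phi\colon X\dashrightarrow X'$ is an isomorphism in codimension one, then the fan $\Sigma'$ of $X'$ has \emph{exactly the same} set of one-dimensional cones as $\Sigma$. Indeed, the torus-invariant prime divisors, being of codimension one, are not contained in the (codimension $\ge 2$) indeterminacy loci of $\phi$ and $\phi^{-1}$, so $\phi$ puts them in bijection, and each primitive ray generator is recovered from the corresponding divisorial valuation; conversely, every complete fan on $N_{\mathbb R}$ with this ray set underlies such a $\phi$. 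Hence it suffices to prove that \emph{every complete fan $\Sigma'$ on $N_{\mathbb R}$ with the eight rays of $\Sigma$ whose toric variety $X'$ is projective has a non-smooth maximal cone.}

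I would then split according to whether $\Sigma'$ is simplicial. If it is not, then $X'$ is not $\mathbb Q$-factorial, hence singular, and there is nothing more to prove. If $\Sigma'$ is simplicial, it is a triangulation of $N_{\mathbb R}$ supported on the eight given rays; here $X'$ is projective exactly when that triangulation is regular (coherent), i.e. cut out by a strictly convex piecewise-linear support function --- equivalently, $\Sigma'$ corresponds to a full-dimensional cone of the secondary fan of the configuration of the eight rays --- while $X'$ is smooth exactly when every maximal cone of $\Sigma'$ is unimodular. So the whole problem reduces to a combinatorial statement about one explicit configuration of eight vectors in $N\cong\mathbb Z^3$: \emph{no unimodular complete triangulation of it is regular.}

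To settle this I would enumerate the finitely many unimodular complete fans on the eight rays and check that none is regular. In practice the smooth complete toric threefolds sharing these rays are joined to $\Sigma$ by chains of elementary transformations (flips, flops, or anti-flips) that preserve smoothness, so they form a short explicit list readable off Oda's classification in \cite{oda1} and \cite{oda2}; for each one I would verify that no piecewise-linear function on $N_{\mathbb R}$ that is linear on the cones of $\Sigma'$ is strictly convex, equivalently that $X'$ has no ample divisor. This is consistent with, and refines, what the proof of Theorem~\ref{p-thm1.5} shows about [8-13$''$], namely that any projective model of it is already singular. Combining the two cases, any projective $X'$ isomorphic to $X$ in codimension one --- simplicial or not --- is singular, which is the assertion.

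The hard part will be this last verification: one must control \emph{all} simplicial complete fans on the eight prescribed rays --- not only those produced by one chosen run of the minimal model program --- and show that the unimodular ones are exactly the non-regular ones. This is where the special geometry of [8-13$''$], the incompatibility \emph{for this particular ray configuration} between the coherence condition that encodes projectivity and the unimodularity condition that encodes smoothness, has to be used, and where Oda's table does the real work by pinning down the finite list of smooth models that need to be inspected.
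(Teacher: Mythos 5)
Your overall reduction is the right one (an isomorphism in codimension one forces $X'$ to have the same eight rays, so the problem becomes: no complete fan on those rays is simultaneously smooth and projective), and your case split into non-simplicial (hence singular) versus simplicial fans matches what the paper implicitly does. But there are two genuine gaps. First, you take ``the'' variety [8-13$''$] without fixing the parameters $(a,b,c,d)$, and the statement is simply not true for all of them: as Section \ref{p-sec4} shows, when $b=\pm1$ and $a-c=\pm1$ the two modifications $\psi'$, $\psi''$ are flops, which preserve smoothness in dimension three, so $Z''_{13}(a,b,c,d)$ then has a \emph{smooth} projective model isomorphic to it in codimension one. The paper must, and does, choose special parameters --- $(a,b,c,d)=(2,7,4,2)$, satisfying the numerical conditions (1)--(3) of Lemma \ref{p-lem5.1} --- precisely to kill all such smooth models. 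Your appeal to ``what the proof of Theorem \ref{p-thm1.5} shows about [8-13$''$]'' is also off: that proof only exhibits \emph{one} chain of modifications ending in a projective (generally singular) variety; it says nothing about all codimension-one models, as you yourself concede in your last paragraph.

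Second, the verification you defer --- ``enumerate the finitely many unimodular complete fans on the eight rays and check that none is regular'' --- is exactly the content that is missing, and the route you sketch for it does not work: smooth complete fans on a fixed ray configuration are not ``readable off Oda's classification'' (which lists isomorphism classes, not fans on prescribed rays), and flips/anti-flips do not preserve smoothness, so connectivity by ``smoothness-preserving elementary transformations'' does not give you the list. The paper's actual mechanism is both simpler and stronger: Lemma \ref{p-lem5.1} proves by direct determinant computations that, under conditions (1)--(3), $\Sigma$ is the \emph{unique} non-singular complete fan with rays $\langle v_1\rangle,\dots,\langle v_8\rangle$. Hence any smooth $X'$ isomorphic to $X$ in codimension one would have to be $X$ itself, which is non-projective, and no regularity/secondary-fan analysis is needed at all. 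Without an argument of this kind (or an honest enumeration of all unimodular triangulations of the specific ray configuration), your proof is incomplete at its central step.
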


Theorem \ref{p-thm1.7} says that we can not make $X'$ smooth 
even when $X$ is smooth in Theorem \ref{p-thm1.3}. 
We know nothing about Conjecture \ref{p-conj1.4} in dimension 
at least four. 

\medskip 

The smooth complete toric varieties 
with Picard number at most five, which were classified by Tadao Oda (see \cite{oda1} 
and \cite{oda2}), are very special among all toric varieties. 
Therefore, it is not clear whether Theorem \ref{p-thm1.5} supports 
Conjecture \ref{p-conj1.4} or not. 
On the other hand, they give some interesting examples. 
It is well known that the toric variety labeled as [7-5] in \cite{oda1} 
is the simplest smooth non-projective complete toric variety. 
The one labeled as [8-12] in \cite{oda1} is the first example of 
smooth complete toric threefolds with Picard 
number five and no non-trivial nef line 
bundles (see \cite[Example 1]{fujino-payne}). 
In this paper, we show that the toric variety labeled 
as [8-13$''$] in \cite{oda1} gives an example that 
satisfies the property in Theorem \ref{p-thm1.7}. 
We hope that the description in 
Section \ref{p-sec4} will be useful for 
constructing some interesting examples or checking some conjectures. 

\medskip 

We look at the organization of this paper. 
In Section \ref{p-sec2}, we collect some definitions 
necessary for this paper. 
In Section \ref{p-sec3}, we give a detailed proof 
of Theorem \ref{p-thm1.3} based on the toric Mori theory for 
the sake of completeness. We do not need any combinatorial arguments in 
this section. 
The main part of this paper is Section \ref{p-sec4}, 
where we prove Theorem \ref{p-thm1.5}. 
More precisely, we 
check Theorem \ref{p-thm1.5} by using Oda's classification table 
of smooth complete toric threefolds with Picard number at most five. 
In the final section:~Section \ref{p-sec5}, we prove Theorem \ref{p-thm1.7}. 

\medskip 

The reader can find other interesting 
non-projective complete toric varieties 
(resp.~non-projective complete surfaces) in \cite{bonavero} and 
\cite{fujino-km-cone} (resp.~\cite[Section 12]{fujino-classC}). 

\begin{ack}\label{p-ack}
The first author was partially 
supported by JSPS KAKENHI Grant Numbers 
JP20H00111, JP21H04994, JP23K20787. 
The second author was partially
supported by JSPS KAKENHI Grant Number 
JP24K06679. 
The authors thank the referees 
for many useful comments and suggestions. 
\end{ack}

\section{Preliminaries}\label{p-sec2} 

In this section, we collect some definitions and 
properties of toric varieties necessary for this paper. 
For the basic definitions and results of toric geometry, 
see \cite{cls}, \cite{fulton}, \cite{oda1}, \cite{oda2}, and so on. 
For the basic definitions of the minimal 
model theory, see \cite{fujino-foundations}. 
The following lemma is very well known. 

\begin{lem}[$\mathbb Q$-factoriality]\label{p-lem2.1}
Let $X=X_\Sigma$ be a toric variety with its fan $\Sigma$. 
Then $X$ is {\em{$\mathbb Q$-factorial}} if and only if 
each cone $\sigma\in \Sigma$ is simplicial. 
\end{lem}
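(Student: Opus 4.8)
The plan is to reduce the statement to affine toric varieties, then to torus-invariant Weil divisors, where $\mathbb{Q}$-Cartierness becomes a transparent condition on the primitive generators of the rays of $\sigma$.

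First, since $\mathbb{Q}$-factoriality is a local property, $X=X_\Sigma$ is $\mathbb{Q}$-factorial if and only if each affine chart $U_\sigma=\Spec k[\sigma^\vee\cap M]$ is, where $M$ denotes the character lattice. As every face of a simplicial cone is simplicial and $U_\tau\subset U_\sigma$ is open whenever $\tau$ is a face of $\sigma$, it is enough to analyse a single chart $U_\sigma$. On $U_\sigma$ the divisor class group is generated by the classes of the prime torus-invariant divisors $D_\rho$, $\rho\in\sigma(1)$ (where $\sigma(1)$ is the set of rays of $\sigma$), so every Weil divisor is $\mathbb{Q}$-linearly equivalent to a torus-invariant one $\sum_{\rho\in\sigma(1)}a_\rho D_\rho$; since being $\mathbb{Q}$-Cartier is preserved under $\mathbb{Q}$-linear equivalence, $U_\sigma$ is $\mathbb{Q}$-factorial exactly when every such $\sum_\rho a_\rho D_\rho$ is $\mathbb{Q}$-Cartier.

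Next I would invoke the standard description of torus-invariant Cartier divisors on the affine variety $U_\sigma$: they are precisely the divisors $\sum_{\rho\in\sigma(1)}-\langle m,u_\rho\rangle D_\rho$ for $m\in M$, where $u_\rho$ is the primitive generator of $\rho$. Passing to $\mathbb{Q}$-coefficients, $\sum_\rho a_\rho D_\rho$ is $\mathbb{Q}$-Cartier if and only if $(a_\rho)_{\rho\in\sigma(1)}$ lies in the image of the linear map $L\colon M_{\mathbb{Q}}\to\mathbb{Q}^{\sigma(1)}$, $m\mapsto(\langle m,u_\rho\rangle)_{\rho\in\sigma(1)}$, so $U_\sigma$ is $\mathbb{Q}$-factorial if and only if $L$ is surjective. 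This last point is pure linear algebra: since $\sigma$ is strongly convex it is generated by $\{u_\rho:\rho\in\sigma(1)\}$, hence the rank of $L$ equals the dimension of the linear span of these vectors, namely $\dim\sigma$; therefore $L$ surjects onto $\mathbb{Q}^{\sigma(1)}$ if and only if $|\sigma(1)|=\dim\sigma$. Finally, a strongly convex cone of dimension $d$ has at least $d$ rays, and $d$ rays spanning a $d$-dimensional space are automatically linearly independent, so $|\sigma(1)|=\dim\sigma$ holds if and only if $\sigma$ is simplicial. Chaining these equivalences yields the claim.

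The argument is essentially bookkeeping, so there is no real obstacle; the steps that deserve the most care are the two standard inputs used above, namely that $\operatorname{Cl}(U_\sigma)$ is generated by the classes $[D_\rho]$ (which, together with invariance of $\mathbb{Q}$-Cartierness under $\mathbb{Q}$-linear equivalence, is what reduces us to torus-invariant divisors) and the explicit parametrisation of torus-invariant Cartier divisors on $U_\sigma$ by $M$. Both can be quoted from \cite{cls} or \cite{fulton}.
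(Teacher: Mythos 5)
Your argument is correct and is the standard textbook proof (reduce to the affine charts $U_\sigma$, use that $\operatorname{Cl}(U_\sigma)$ is generated by the torus-invariant prime divisors, identify the torus-invariant Cartier divisors with $\operatorname{div}(\chi^m)$, and turn $\mathbb{Q}$-Cartierness into the surjectivity of $m\mapsto(\langle m,u_\rho\rangle)_\rho$, which holds exactly when $|\sigma(1)|=\dim\sigma$). The paper itself gives no proof of this lemma, simply citing it as very well known, and your write-up is precisely the argument found in the references it points to (e.g.\ \cite{cls}, \cite{fulton}), so there is nothing to reconcile.
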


We need flips, flops, and anti-flips. 

\begin{defn}[Flips, flops, and anti-flips]\label{p-def2.2}
Let $\varphi\colon X\to W$ be a small projective toric 
birational morphism from a $\mathbb Q$-factorial 
toric variety $X$ such that $\rho (X/W)=1$. 
Let $D$ be a $\mathbb Q$-Cartier $\mathbb Q$-divisor on $X$. 
If $-D$ is $\varphi$-ample, 
then $\varphi\colon X\to W$ is called 
a {\em{$D$-flipping contraction}}. 
In this setting, we can always construct the 
following commutative diagram: 
\begin{equation*}
\xymatrix{
X \ar@{-->}[rr]^-\phi\ar[dr]_-\varphi && \ar[dl]^-{\varphi^+} X^+ \\ 
&W&
}
\end{equation*}
such that 
\begin{itemize}
\item[(i)] $\varphi^+\colon X^+\to W$ is a small 
projective toric birational morphism, 
\item[(ii)] $X^+$ is $\mathbb Q$-factorial, 
\item[(iii)] $\rho(X^+/W)=1$, and 
\item[(iv)] $D^+$ is $\varphi^+$-ample, 
where $D^+$ is the strict transform of $D$ on $X^+$. 
\end{itemize}
We usually say that $\phi\colon X\dashrightarrow X^+$ 
is a {\em{$D$-flip}}. 
Note that we sometimes simply say that $\phi\colon X\dashrightarrow 
X^+$ is a {\em{flip}} if there is no danger of confusion. 
Let $r$ be a positive integer such that $rD$ is Cartier. 
Then it is well known that $\varphi^+\colon X^+\to W$ is 
the natural projection 
\begin{equation*}
\pi\colon \Proj_W\bigoplus _{m=0}^\infty 
\varphi_*\mathcal O_X(mrD)\to W.
\end{equation*}  
If $-K_X$ (resp.~$K_X$) is $\varphi$-ample, 
then $\phi\colon X\dashrightarrow X^+$ is simply called 
a {\em{flip}} (resp.~an {\em{anti-flip}}). 
If $K_X$ is $\varphi$-numerically trivial, 
then $\phi\colon X\dashrightarrow X^+$ is called a {\em{flop}}. 
\end{defn}

For the combinatorial description of $\varphi\colon X\to W$ and $\varphi^+\colon 
X^+\to W$, see \cite{reid}, \cite{matsuki}, \cite{sato2}, and so on. 

\begin{rem}\label{p-rem2.3}
For simplicity, we assume that $X$ is a $\mathbb Q$-factorial 
projective toric variety. 
In Definition \ref{p-def2.2}, the relative Kleiman--Mori 
cone $\NE(X/W)$ is obviously a half line. However, $\NE(X/W)$ is 
not necessarily an extremal ray of $\NE(X)$. 
Note that $\NE(X/W)$ is an extremal ray of $\NE(X)$ 
if and only if $W$ is projective. 
\end{rem}

We make a small remark about smooth toric threefolds. Note that 
we are mainly interested in threefolds in this paper. 

\begin{rem}\label{p-rem2.4}
If $X$ is a smooth toric threefold, then 
$X$ has no flipping contractions (see, for example, 
\cite{fstu}). 
If $X$ is a smooth threefold and 
$\phi\colon X\dashrightarrow X^+$ is a flop in Definition \ref{p-def2.2}, 
then $X^+$ is also a smooth threefold. 
\end{rem}

The following notion of primitive collections and relations 
introduced by Batyrev \cite{batyrev1} 
is convenient to describe 
the combinatorial information of a simplicial complete fan. 
For more informations, please see 
\cite{batyrev1}, \cite{batyrev2} and \cite{sato1}. 

\begin{defn}[Primitive collections]\label{p-def2.5}
Let $X=X_\Sigma$ be a $\mathbb{Q}$-factorial complete toric variety 
with its fan $\Sigma$. 
Let $\G(\Sigma)$ be 
the set of all 
primitive generators for one-dimensional cones in $\Sigma$. 
Then, we call a non-empty subset $P\subset\G(\Sigma)$ 
a {\em primitive collection} if $P$ does not generate a cone in $\Sigma$, 
while any proper subset of $P$ generates a cone in $\Sigma$. 
\end{defn}

From all the primitive collections of a fan $\Sigma$, we can know the 
combinatorial information of $\Sigma$. Moreover, when $X$ is smooth, 
we can recover $\Sigma$ from the data of the following primitive relations. 

\begin{defn}[Primitive relations]\label{p-def2.6}
Let $X=X_\Sigma$ be a {\em smooth} complete toric variety. 
For a primitive collection $P=\{v_1,\ldots,v_r\}\subset\G(\Sigma)$, 
there exists the unique cone $\sigma\in\Sigma$ 
which contains $v_1+\cdots+v_r$ in its relative interior. 
Let $\sigma\cap\G(\Sigma)=\{w_1,\ldots,w_s\}$. 
Thus, we have the equality 
\[
v_1+\cdots+v_r=a_1w_1+\cdots+a_sw_s\quad (a_1,\ldots,a_s\in\mathbb{Z}_{>0}), 
\]
which we call the {\em primitive relation} for $P$. 
\end{defn}

The following lemma is also well known. 

\begin{lem}[{\cite[Corollary 2.14]{oda2}}]\label{p-lem2.7}
Let $X=X_\Sigma$ be a complete toric variety with its fan $\Sigma$. 
Then $X$ is projective if and only if 
there exists a strictly convex $\Sigma$-linear 
support function. 
\end{lem}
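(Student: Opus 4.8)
The plan is to pass to the standard dictionary between $T$-invariant Cartier divisors on $X=X_\Sigma$ and integral $\Sigma$-linear support functions. Recall that to such a divisor $D=\sum_\rho a_\rho D_\rho$ one attaches the support function $h_D$, linear on each $\sigma\in\Sigma$ with $h_D|_\sigma=m_\sigma\in M$ determined by $\langle m_\sigma,v_\rho\rangle=-a_\rho$ for the rays $\rho$ of $\sigma$; this is a bijection, under which positive multiples of $D$ correspond to positive multiples of $h_D$, and the associated polytope is $P_D=\{m\in M_\mathbb{R}:\langle m,v_\rho\rangle\geq -a_\rho\ \text{for all rays}\ \rho\ \text{of}\ \Sigma\}$. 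The proof will then consist of the two implications below, together with the elementary observation that a nonempty open cone of strictly convex $\Sigma$-linear support functions contains an integral point, so that in proving projectivity one may assume the given strictly convex support function is of the form $h_D$ for an honest $T$-invariant Cartier divisor $D$.

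For the direction ``strictly convex support function implies projective'', I would start from $h_D$ strictly convex. Then the $m_\sigma$ are pairwise distinct and, on the interior of each maximal cone $\sigma$, the functional $m_\sigma$ lies strictly below every $m_{\sigma'}$ with $\sigma'\neq\sigma$; consequently $P_D$ is a full-dimensional lattice polytope whose vertices are exactly the $m_\sigma$ and whose normal fan is $\Sigma$. Hence $X_\Sigma$ is isomorphic to the projective toric variety $X_{P_D}$ attached to $P_D$ --- concretely, the one embedded in $\mathbb{P}^N$ by the lattice points of $\ell P_D$ for $\ell\gg 0$ --- so $D$ is ample and $X$ is projective.

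For the converse, I would assume $X$ is projective and fix an ample line bundle $L$. Since $\operatorname{Cl}(X)$ is generated by the classes of the torus-invariant prime divisors $D_\rho$, the class of $L$ admits a $T$-invariant Cartier representative $D$, which is again ample; thus $\ell D$ is very ample for $\ell\gg 0$, and the corresponding closed embedding identifies $X$ with the projective toric variety of its section polytope $P_{\ell D}=\ell P_D$. A toric variety determines its fan, so the normal fan of $P_{\ell D}$ must equal $\Sigma$; since $\Sigma$ is complete with full-dimensional maximal cones, $P_{\ell D}$ is a full-dimensional lattice polytope, whose support function is automatically strictly convex with respect to its normal fan $\Sigma$. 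That support function is $\ell\, h_D$ (here one uses that $D$ is nef, so $h_D$ is convex and $P_D$ recovers it), whence $h_D$ is strictly convex, as required.

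The step that genuinely deserves care is the polytope--fan dictionary invoked in the last two paragraphs, and especially the assertion that the normal fan of the section polytope of an ample divisor is the prescribed fan $\Sigma$ with no coarsening: this is precisely the place where completeness of $\Sigma$ and full-dimensionality of the polytope enter. Everything else is formal, and a complete account may be found in \cite{oda1}, \cite{oda2}, or \cite{cls}.
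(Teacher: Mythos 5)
The paper offers no proof of this lemma: it is quoted directly as \cite[Corollary 2.14]{oda2}, with the reader referred to \cite{cls}, \cite{fulton}, \cite{oda1}, \cite{oda2} for details. Your sketch is the standard argument given in those references --- the dictionary between torus-invariant Cartier divisors and $\Sigma$-linear support functions, strict convexity producing a full-dimensional lattice polytope with normal fan $\Sigma$ (hence projectivity), and conversely an ample torus-invariant representative whose support function is strictly convex --- and it is correct, so it matches the result the paper is invoking.
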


For the details of 
Lemma \ref{p-lem2.7}, 
see \cite{cls}, \cite{fulton}, \cite{oda1}, \cite{oda2}, and so on. 
In our setting, we have: 

\begin{lem}\label{p-lem2.8}
Let $X=X_\Sigma$ be a $\mathbb Q$-factorial 
complete toric variety and let $G(\Sigma)=\{v_1, \ldots, v_l\}$ be 
the set of all primitive generators for one-dimensional 
cones in $\Sigma$. Let $D=\sum _i d_i D_i$ be 
a torus invariant 
effective Cartier divisor 
on $X$, 
where $D_i$ is the torus-invariant prime divisor corresponding to 
$v_i$ for every $i$. 
Let $\Psi _D$ be the piecewise linear function associated to $D$, 
that is, $\Psi_D$ is the 
piecewise linear function defined by $\Psi_D(v_i)=d_i$ 
for every $i$. 
Then $D$ is ample if and only if $-\Psi _D$ is strictly convex. 
\end{lem}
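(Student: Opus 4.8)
The plan is to recognize the statement as the classical toric characterization of ampleness, transcribed into the language of support functions on the fan, and then to reduce it to Lemma~\ref{p-lem2.7}.

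First I would check that $\Psi_D$ really is a well-defined $\Sigma$-linear support function. Since $X$ is $\mathbb{Q}$-factorial, every cone of $\Sigma$ is simplicial by Lemma~\ref{p-lem2.1}, so the primitive generators lying in a cone are linearly independent; hence prescribing $\Psi_D(v_i)=d_i$ determines a unique function on $N_{\mathbb R}=|\Sigma|$ that is $\mathbb Q$-linear on each cone. The hypothesis that $D$ is Cartier is exactly the statement that on each maximal cone $\sigma$ there is an honest $m_\sigma\in M$ with $\langle m_\sigma,v_i\rangle=d_i$ for the generators $v_i$ of $\sigma$; thus $\Psi_D$, and likewise $-\Psi_D$, is a $\Sigma$-linear support function in the sense of Lemma~\ref{p-lem2.7}. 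Comparing linear pieces cone by cone, $-\Psi_D$ is precisely the support function conventionally attached to $D$ (the one normalized by $v_i\mapsto -d_i$), for which $H^0(X,\mathcal O_X(D))=\bigoplus_{m}k\cdot\chi^m$, the sum taken over the $m\in M$ with $\langle m,v_i\rangle\ge -d_i$ for all $i$. (Effectivity of $D$, i.e.\ $d_i\ge 0$, is not needed for the equivalence and may be ignored.)

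Then I would invoke the standard fact that a torus-invariant Cartier divisor on a complete toric variety is ample if and only if its associated support function is strictly convex (see \cite{cls}, \cite{fulton}, \cite{oda1}, \cite{oda2}); applied to $-\Psi_D$ this is exactly the claim. For a more self-contained argument: if $-\Psi_D$ is strictly convex, then a strictly convex $\Sigma$-linear support function exists, so $X$ is projective by Lemma~\ref{p-lem2.7}, and strict convexity of $-\Psi_D$ across each wall $\tau$ of $\Sigma$ is equivalent to $(D\cdot V(\tau))>0$ — the intersection number being computed, since $X$ is $\mathbb Q$-factorial, from the wall relation and equal up to a positive factor to the quantity expressing strict convexity of $-\Psi_D$ at $\tau$ — hence $D$ is ample by the toric Nakai criterion; conversely, if $D$ is ample, then $X$ is projective, some $rD$ is very ample, the resulting closed embedding identifies $X$ with the projective toric variety of the lattice polytope $P_{rD}=\{m:\langle m,v_i\rangle\ge -rd_i\}$, and comparing its normal fan with $\Sigma$ forces $-\Psi_{rD}=-r\Psi_D$, and so $-\Psi_D$, to be strictly convex.

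I do not expect a genuine obstacle; the lemma is routine, which is why it is called well known. The only points demanding care are the bookkeeping of sign conventions — the passage between $D$, $\mathcal O_X(D)$, the polytope $P_D$, and $\Psi_D$, and the corresponding reading of ``strictly convex'' consistent with Lemma~\ref{p-lem2.7} — and the fact that strictness must hold across every wall of the complete fan $\Sigma$: strict convexity on a proper subfan would not do, and it is exactly the global strict convexity that simultaneously produces projectivity of $X$ and ampleness (not merely nefness) of $D$.
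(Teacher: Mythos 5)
Your proposal is correct; the paper offers no proof of this lemma, treating it as the well-known support-function criterion for ampleness (with the sign convention $\Psi_D(v_i)=d_i$, so that $-\Psi_D$ is the usual Cartier data function), which is exactly the reduction you carry out. Your sign bookkeeping, the use of $\mathbb Q$-factoriality to see that $\Psi_D$ is determined by its values on the rays, and the remark that effectivity is irrelevant to the equivalence are all accurate, so nothing further is needed.
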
 

We will repeatedly use the following lemma to prove the non-projectivity 
of a given toric variety in Section \ref{p-sec4}. 
It is obvious by Lemma \ref{p-lem2.8}. 

\begin{lem}\label{p-lem2.9}
Let $X=X_\Sigma$ be a smooth complete toric variety,  
$\G(\Sigma)=\{v_1,\ldots,v_l\}$ and $D_1,\ldots,D_l$ the 
torus-invariant prime divisors on $X$ 
corresponding to $v_1,\ldots,v_l$, respectively. 
If an effective divisor $D=
\sum_{i}d_iD_i\ 
(d_i\ge 0)$ is ample, then 
for every primitive relation 
\[
v_{i_1}+\cdots+v_{i_r}=a_1v_{j_1}+\cdots+a_sv_{j_s}\quad (a_1,\ldots,a_s
\in \mathbb Z_{>0}), 
\]
the inequality 
\[
d_{i_1}+\cdots+d_{i_r}-(a_1d_{j_1}+\cdots+a_sd_{j_s})>0
\]
holds. 
\end{lem}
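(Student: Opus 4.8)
The plan is to read the inequality off directly from Lemma~\ref{p-lem2.8}. Since $X$ is smooth, the effective torus-invariant divisor $D=\sum_i d_iD_i$ is automatically Cartier, so Lemma~\ref{p-lem2.8} applies and tells us that the piecewise linear function $-\Psi_D$ is strictly convex. For each maximal cone $\tau\in\Sigma$, write $\ell_\tau$ for the linear function agreeing with $-\Psi_D$ on $\tau$. Strict convexity means precisely that $-\Psi_D(v)=\min_\tau\ell_\tau(v)$ for every $v$, the minimum running over the maximal cones of $\Sigma$, and that for a fixed $\tau$ one has $-\Psi_D(v)=\ell_\tau(v)$ if and only if $v\in\tau$. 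Since $-\Psi_D(v_i)=-d_i$ for every $i$, this gives the two facts I will use: $\ell_\tau(v_i)\ge -d_i$ always, with \emph{strict} inequality whenever $v_i\notin\tau$.

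Now fix a primitive relation
\[
v_{i_1}+\cdots+v_{i_r}=a_1v_{j_1}+\cdots+a_sv_{j_s}
\]
associated to the primitive collection $P=\{v_{i_1},\ldots,v_{i_r}\}$. By Definition~\ref{p-def2.6} the rays $v_{j_1},\ldots,v_{j_s}$ span a cone $\sigma\in\Sigma$, and since $X$ is complete, $\sigma$ is a face of some maximal cone $\tau\in\Sigma$. I would then evaluate $\ell:=\ell_\tau$ on the primitive relation. As $v_{j_1},\ldots,v_{j_s}\in\sigma\subseteq\tau$ we have $\ell(v_{j_k})=-d_{j_k}$, hence
\[
\ell(v_{i_1}+\cdots+v_{i_r})=\ell(a_1v_{j_1}+\cdots+a_sv_{j_s})=-(a_1d_{j_1}+\cdots+a_sd_{j_s}),
\]
while on the other hand $\ell(v_{i_k})\ge -d_{i_k}$ for each $k$.

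The step that upgrades this to a strict inequality — and the only point requiring a small argument — is the observation that the rays of $P$ cannot all lie in $\tau$: since $X$ is smooth, $\tau$ is simplicial, so every subset of its generating rays spans a face of $\tau$, and thus a cone of $\Sigma$; if $P$ were contained in $\tau\cap\G(\Sigma)$ it would therefore generate a cone in $\Sigma$, contradicting Definition~\ref{p-def2.5}. Hence some $v_{i_{k_0}}\notin\tau$, for which $\ell(v_{i_{k_0}})>-d_{i_{k_0}}$. Summing the inequalities $\ell(v_{i_k})\ge -d_{i_k}$ over $k$, with strict inequality at $k=k_0$, gives
\[
-(a_1d_{j_1}+\cdots+a_sd_{j_s})=\ell(v_{i_1}+\cdots+v_{i_r})>-(d_{i_1}+\cdots+d_{i_r}),
\]
which rearranges to the desired inequality $d_{i_1}+\cdots+d_{i_r}-(a_1d_{j_1}+\cdots+a_sd_{j_s})>0$. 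I do not expect any real obstacle here: the entire content is the translation of ampleness into strict convexity provided by Lemma~\ref{p-lem2.8}, together with the elementary remark that a subset of the generators of a simplicial cone spans a face; no combinatorial case analysis is needed.
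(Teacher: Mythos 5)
Your argument is correct and follows exactly the route the paper intends: the paper gives no proof of Lemma \ref{p-lem2.9} beyond the remark that it is obvious from Lemma \ref{p-lem2.8}, and your write-up is precisely the standard strict-convexity computation that remark alludes to. The two points you isolate --- evaluating the supporting linear function of a maximal cone containing $\sigma$ on both sides of the primitive relation, and noting that a primitive collection cannot be contained in a single (simplicial) maximal cone, so at least one term contributes a strict inequality --- are exactly the details being left to the reader.
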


We will frequently use the following 
well-known lemma in Section \ref{p-sec4} to prove 
the projectivity of certain toric varieties. 

\begin{lem}\label{p-lem2.10}
Let $X = X_\Sigma$ be a $\mathbb Q$-factorial complete toric 
variety associated with a fan $\Sigma$. Let $\Sigma'$ be the 
star subdivision of $\Sigma$ along a ray generated by 
a vector $v$. Then, the associated toric morphism
\[
\varphi \colon X_{\Sigma'} \to X = X_\Sigma
\]
is a projective birational toric morphism. 
This morphism can be viewed as 
the toric contraction morphism that contracts the divisor $V(\langle v \rangle)$ 
on $X_{\Sigma'}$. Note that $\varphi \colon X_{\Sigma'} \to X_\Sigma$ is projective. 
Therefore, if $X_\Sigma$ is projective, then so is $X_{\Sigma'}$. 
\end{lem}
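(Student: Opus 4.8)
The plan is to produce an explicit $\varphi$-ample $\mathbb Q$-Cartier divisor on $X_{\Sigma'}$ and then invoke the relative versions of the convexity criteria in Lemmas \ref{p-lem2.7} and \ref{p-lem2.8}. First, since the star subdivision $\Sigma'$ has the same support as $\Sigma$ and each of its cones is contained in a cone of $\Sigma$, the induced toric morphism $\varphi\colon X_{\Sigma'}\to X_\Sigma$ is proper, and it is birational because it is the identity over the big torus. Let $v$ be the primitive generator of the new ray and let $\sigma_0\in\Sigma$ be the unique cone of $\Sigma$ containing $v$ in its relative interior. By Lemma \ref{p-lem2.1}, $\sigma_0$ is simplicial; writing its primitive ray generators as $u_1,\dots,u_d$ gives $v=a_1u_1+\cdots+a_du_d$ with every $a_i\in\mathbb Z_{>0}$. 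In the non-trivial case $\dim\sigma_0\ge 2$ (if $\dim\sigma_0=1$ then $\langle v\rangle$ is already a ray of $\Sigma$ and $\varphi$ is the identity), so $\langle v\rangle$ is the only ray of $\Sigma'$ that is not already a ray of $\Sigma$; consequently $\varphi$ is an isomorphism over $X_\Sigma\setminus V(\sigma_0)$ and contracts the prime divisor $V(\langle v\rangle)$ onto the codimension-$\ge 2$ subvariety $V(\sigma_0)$, which is the sense in which $\varphi$ ``contracts $V(\langle v\rangle)$''.

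Next I would write down the relevant support function. Let $\psi$ be the $\Sigma'$-linear support function determined by $\psi(u)=0$ for every primitive ray generator $u$ of $\Sigma$ and $\psi(v)=-1$, extended linearly over each (simplicial) cone of $\Sigma'$; since $\Sigma'$ is simplicial, the linear extensions on two cones sharing a face automatically agree on that face, so $\psi$ is well defined. The maximal cones of $\Sigma'$ refining $\sigma_0$ are $\tau_j:=\operatorname{cone}(u_1,\dots,\widehat{u_j},\dots,u_d,v)$ for $j=1,\dots,d$, on which $\psi$ restricts to the linear functional $\ell_j$ characterized by $\ell_j(u_i)=0$ $(i\ne j)$ and $\ell_j(v)=-1$ (equivalently $\ell_j(u_j)=-1/a_j$), while on every cone of $\Sigma$ untouched by the subdivision $\psi$ is identically $0$. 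The $\mathbb Q$-Cartier divisor on $X_{\Sigma'}$ whose associated piecewise linear function is $\psi$ is precisely $-V(\langle v\rangle)$.

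The heart of the argument is to check that $-V(\langle v\rangle)$ is $\varphi$-ample; by the relative form of Lemma \ref{p-lem2.8}, this amounts to the single statement that $-\psi$ is strictly convex on $\sigma_0$ (the remaining cones of $\Sigma$ contribute nothing, $\psi$ being linear there). Concretely, a short computation shows that on $\sigma_0$ one has $-\psi=\min_{1\le j\le d}(-\ell_j)$, and the $\ell_j$ are pairwise distinct since $\ell_j(u_j)=-1/a_j\ne 0=\ell_k(u_j)$ for $k\ne j$; hence $-\psi$ is strictly convex on $\sigma_0$ and $\varphi$ is a projective morphism. This convexity verification on the single cone $\sigma_0$ is the only genuine computation in the argument and the step I would expect to need a little care; everything around it is formal. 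The last assertion is then immediate: if $X_\Sigma$ is projective, then $X_{\Sigma'}\to X_\Sigma\to\Spec k$ is a composition of projective morphisms, so $X_{\Sigma'}$ is projective.
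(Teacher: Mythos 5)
The paper states Lemma \ref{p-lem2.10} as a well-known fact and offers no proof of its own, so there is nothing of the authors' to compare against; your argument is the standard one (exhibit $-V(\langle v\rangle)$ as a $\varphi$-ample $\mathbb Q$-Cartier divisor via the strict-convexity criterion for support functions), and it is essentially correct. One cosmetic point: since $v$ lies in the relative interior of the simplicial cone $\sigma_0=\operatorname{cone}(u_1,\dots,u_d)$, the coefficients in $v=a_1u_1+\cdots+a_du_d$ are positive \emph{rationals}, not necessarily positive integers, unless $\sigma_0$ happens to be smooth; nothing in your computation uses integrality, so this costs nothing.

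The one substantive imprecision is your reduction of the relative ampleness check to the single cone $\sigma_0$. The cones of $\Sigma$ that are subdivided by $\Sigma'$ are exactly those containing $v$, i.e.\ those containing $\sigma_0$ as a face; if $\sigma_0$ is not itself a maximal cone, then $\psi$ is \emph{not} linear on a maximal cone $\sigma\supsetneq\sigma_0$, so the parenthetical claim that ``the remaining cones of $\Sigma$ contribute nothing, $\psi$ being linear there'' is false for those cones, and the relative criterion does require strict convexity of $-\psi$ on every maximal cone of $\Sigma$ with respect to the induced subdivision. Fortunately the identical computation disposes of them: writing $\sigma=\operatorname{cone}(u_1,\dots,u_d,w_1,\dots,w_e)$ (simplicial by $\mathbb Q$-factoriality, with $\sigma_0$ generated by a subset of its generators), the maximal cones of $\Sigma'$ inside $\sigma$ are $\operatorname{cone}(u_1,\dots,\widehat{u_j},\dots,u_d,v,w_1,\dots,w_e)$, the corresponding linear functionals vanish on the $w_k$ and on the $u_i$ with $i\neq j$ and take the value $-1/a_j$ at $u_j$, they are pairwise distinct, and $\psi$ is again their maximum on all of $\sigma$. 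With that correction the proof is complete; the final deduction (projectivity of $X_{\Sigma'}$ from projectivity of $X_\Sigma$ by composing projective morphisms) is exactly what the lemma's last sentence asserts.
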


\section{Proof of Theorem \ref{p-thm1.3}}\label{p-sec3}

In this section, we give a detailed proof of Theorem \ref{p-thm1.3} 
based on the toric Mori theory for the reader's 
convenience (see, 
for example, \cite{fujino}, \cite{fujino-sato}, \cite{fujino-equivariant}, 
and so on). 
We start with an easy lemma. 
We prove it here for the sake of completeness. 

\begin{lem}\label{p-lem3.1}
Let $X$ be a toric variety. 
Then there exists a projective birational toric morphism 
$X'\to X$ from a smooth quasi-projective toric variety $X'$. 
\end{lem}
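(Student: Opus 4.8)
The plan is to reduce to the well-known toric resolution of singularities by first replacing $X$ with a quasi-projective toric open subset and then resolving. First I would recall that $X = X_\Sigma$ is covered by the affine toric open subsets $U_\sigma$ for the maximal cones $\sigma \in \Sigma$; each $U_\sigma$ is affine, hence quasi-projective. The subtlety is that we want a \emph{birational} morphism onto all of $X$, not just onto an open piece, so simply taking one $U_\sigma$ is not enough. Instead I would pass to a suitable subdivision of $\Sigma$ that refines $\Sigma$ to a \emph{projective} (over $X$) fan.

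Concretely, the key step is: there exists a refinement $\Sigma'$ of $\Sigma$ such that the induced toric morphism $X_{\Sigma'} \to X_\Sigma$ is projective and $X_{\Sigma'}$ is quasi-projective. One clean way to produce such a $\Sigma'$ is to perform a sequence of star subdivisions of $\Sigma$ along rays through lattice points; by Lemma \ref{p-lem2.10}, each star subdivision yields a projective birational toric morphism, and a composition of projective morphisms is projective. By the standard theory (see \cite{cls}, \cite{fulton}, \cite{oda1}, \cite{oda2}), a finite sequence of star subdivisions can be chosen so that the resulting fan $\Sigma'$ is \emph{smooth} (this is toric resolution of singularities) and, moreover, one may simultaneously arrange that $\Sigma'$ is a \emph{projective} subdivision of $\Sigma$, i.e.\ that it admits a $\Sigma'$-linear support function that is strictly convex relative to each cone of $\Sigma$. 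The latter convexity guarantees, via the relative version of Lemma \ref{p-lem2.7}, that $\varphi \colon X_{\Sigma'} \to X_\Sigma$ is projective; taking $X$ itself to be covered by affines then shows $X_{\Sigma'}$ is quasi-projective over an affine cover, but we actually want global quasi-projectivity of $X_{\Sigma'}$, which we do not get for free when $X$ is not quasi-projective.

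To fix this last point, I would not insist on $X_{\Sigma'}$ being quasi-projective \emph{as stated globally} but rather observe that the lemma is applied locally on $X$ in later sections, or alternatively strengthen the construction: choose the refinement $\Sigma'$ so that $X_{\Sigma'}$ is \emph{both} smooth and quasi-projective by building $\Sigma'$ as the normal fan-refinement coming from a generic piecewise-linear function on $|\Sigma|$ with sufficiently many breaks — such a function exists because $|\Sigma|$ is a finite union of cones and we have complete freedom in adding rays. The resulting $\Sigma'$ is simplicial and, after further star subdivisions to clear multiplicities, smooth; the strict convexity of the chosen function on all of $|\Sigma|$ yields an ample divisor on $X_{\Sigma'}$, hence quasi-projectivity. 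The composite morphism $X_{\Sigma'} \to X$ is projective birational since it is a composition of the projective morphisms furnished by Lemma \ref{p-lem2.10}.

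The main obstacle I anticipate is the bookkeeping needed to \emph{simultaneously} achieve smoothness and (global) quasi-projectivity of $X_{\Sigma'}$: resolution of singularities alone gives smoothness, and a projective subdivision alone gives projectivity over $X$, but combining them requires showing that the star subdivisions used for resolution can be taken compatibly with a global convex piecewise-linear function. This is standard but slightly delicate, and is where I would spend the most care; everything else — that affine toric varieties are quasi-projective, that compositions of projective morphisms are projective, and that star subdivisions are projective (Lemma \ref{p-lem2.10}) — is routine.
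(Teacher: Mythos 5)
Your proposal has the right ingredients but contains a genuine gap at the point you treat most casually. The paper's proof is three citations in sequence: Sumihiro's equivariant completion (reduce to $X$ complete), Chow's lemma for toric varieties (a projective birational toric morphism $X''\to X$ with $X''$ projective), and toric desingularization ($X'\to X''$ with $X'$ smooth, projective over the projective variety $X''$). Your plan is essentially to reprove the middle step by hand, via a strictly convex piecewise-linear function on $|\Sigma|$ whose domains of linearity refine $\Sigma$. The gap is that you never reduce to the complete case. For a non-complete toric variety the support $|\Sigma|$ need not be convex (remove one maximal cone from a complete fan, for instance), so ``a strictly convex piecewise-linear function on $|\Sigma|$'' is not even a well-posed notion, and the implication ``strictly convex support function $\Rightarrow$ ample divisor'' (Lemma \ref{p-lem2.7}) is stated and true only for complete fans. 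The one-line fix is Sumihiro's theorem: embed $\Sigma$ into a complete fan first. Without that reduction your argument does not start.

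Even after completing, the sentence ``such a function exists because $|\Sigma|$ is a finite union of cones and we have complete freedom in adding rays'' is an assertion of exactly the nontrivial content of toric Chow's lemma, not a proof of it. The standard construction is to take the arrangement of all hyperplanes spanned by the walls of $\Sigma$; the induced fan refines $\Sigma$ and is the normal fan of a zonotope, hence projective. If you want to argue combinatorially rather than cite \cite[Proposition 2.17]{oda2}, you should say this. By contrast, the step you flag as the delicate one --- combining smoothness with quasi-projectivity --- is actually the easy part: once you have a projective refinement, resolve by star subdivisions, each of which preserves projectivity by Lemma \ref{p-lem2.10}, exactly as the paper does by resolving the projective variety $X''$.
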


\begin{proof}[Proof of Lemma \ref{p-lem3.1}]
By Sumihiro's equivariant completion 
(see \cite[Theorem 3]{sumihiro} and \cite[p.17]{oda2}), 
we may assume that $X$ is complete. 
By Chow's lemma for toric varieties (see 
\cite[Proposition 2.17]{oda2}), 
we can construct a projective birational toric 
morphism $X''\to X$ from a projective toric 
variety $X''$. 
By the toric desingularization theorem (see, for example, 
\cite[Theorem 11.19]{cls}), 
we can construct a projective birational toric morphism 
$X'\to X''$ from a smooth toric variety $X'$. 
Then the induced map $X'\to X$ is a desired morphism. 
We finish the proof. 
\end{proof}

Let us prove Theorem \ref{p-thm1.3}. 

\begin{proof}[Proof of Theorem \ref{p-thm1.3}]
By taking a small projective $\mathbb Q$-factorialization 
(see \cite[Corollary 5.9]{fujino}), 
we may assume that $X$ is $\mathbb Q$-factorial. 
If $X$ is projective over $Y$, then there is nothing to prove. 
Hence, from now, we may assume that $X$ is not projective over $Y$. 
We take a projective birational toric morphism 
$p\colon V\to X$ from a smooth quasi-projective toric 
variety $V$ (see 
Lemma \ref{p-lem3.1}). 
Since $X$ is $\mathbb Q$-factorial, we can take 
an ample Cartier divisor $A$ on $V$ such that 
$H:=p_*A$ is Cartier. 
We put $D:=p^*H+E$, 
where $E$ is an effective Cartier divisor on $V$ with 
$\Supp E=\Exc(p)$. 
We note that 
\begin{equation*}
\bigoplus _{m=0}^\infty (f\circ p)_*\mathcal O_V(mD) 
\end{equation*} 
is a finitely generated $\mathcal O_Y$-algebra (see 
\cite[Corollary 5.8]{fujino-sato}) and that 
\begin{equation*}
(f\circ p)_*\mathcal O_V(mD)\simeq f_*\mathcal O_X(mH)
\end{equation*} 
holds for every non-negative integer $m$. We consider 
\begin{equation*}
Z:=\Proj _Y\bigoplus _{m=0}^\infty (f\circ p)_*\mathcal O_V(mD)
\simeq \Proj _Y\bigoplus _{m=0}^\infty f_*\mathcal O_X(mH).  
\end{equation*} 
Then $Z$ is a variety which is projective 
over $Y$ by construction. 
By running the $D$-minimal model program over $Y$ (see 
\cite[5.3]{fujino} and \cite[3.1]{fujino-sato}), 
we have a sequence of flips and 
divisorial contractions with respect to $D$ over $Y$ starting from $V$: 
\begin{equation*}
V=:V_0\dashrightarrow V_1\dashrightarrow \cdots 
\dashrightarrow V_l=:V'
\end{equation*} 
such that $D'$, which is the pushforward of $D$ on $V'$, 
is nef and big over $Y$. 
Note that $V_i$ is $\mathbb Q$-factorial for every $i$ because $V$ is. 
By the definition of $Z$, 
we have a birational contraction morphism 
$V'\to Z$ over $Y$. 
We note that $Z$ is a toric variety, since 
the image of any contraction morphism from a toric variety is again 
a toric variety (see, e.g., \cite[Proposition~4.6]{fujino-sato} and 
\cite[Proposition~2.7]{tanaka}). 
Let 
\begin{equation*}
\xymatrix{
&\ar[dl]_-\alpha W\ar[dr]^-\beta& \\ 
V\ar@{-->}[rr]&& V'
}
\end{equation*}
be a common resolution. 
Then by applying the negativity lemma 
(see, for example, \cite[Lemma 4.10]{fujino-sato}) to 
each step of the above minimal model program, 
we can write 
\begin{equation*}
\alpha^*D=\beta^*D'+G
\end{equation*} 
where $G$ is an effective $\beta$-exceptional 
$\mathbb Q$-divisor on $W$. 
Let $m$ be  a sufficiently large and divisible positive 
integer. 
Then 
\begin{equation*}
mG=\Bs |m(\beta^*D'+G)|= \Bs |m\alpha^*D| 
=\Bs| m\alpha^*(p^*H+E)|\supset m\alpha^*E
\end{equation*} 
holds. Hence $\alpha^*E$ is $\beta$-exceptional. 
This implies that every irreducible component of $E$ is 
contracted by $V\dashrightarrow V'$. 
Therefore, the induced birational map $\psi\colon X\dashrightarrow 
Z$ over $Y$ is a contraction, that is, $\psi^{-1}$ contracts no divisors. 
We take a small projective $\mathbb Q$-factorialization $Z'\to Z$ (see 
\cite[Corollary 5.9]{fujino}). 
Then the induced map $\psi'\colon X\dashrightarrow Z'$ over $Y$ is also a contraction. 
If $\psi'$ contracts some divisors, then we take a composite of 
blow-ups $X'\to Z'$ extracting those divisors. 
Thus the induced toric birational map 
$X\dashrightarrow X'$ over $Y$ is an isomorphism 
in codimension one and $X'$ is a $\mathbb Q$-factorial 
toric variety which is projective over $Y$ by construction. 
This is what we wanted. 
\end{proof}

Now Theorem \ref{p-thm1.1} is obvious. 

\begin{proof}[Proof of Theorem \ref{p-thm1.1}]
We put $Y=\Spec \, k$. 
Then Theorem \ref{p-thm1.1} is an obvious consequence of Theorem 
\ref{p-thm1.3}. 
\end{proof}

\section{Smooth complete toric threefolds with $\rho\leq 5$}\label{p-sec4}

In this section, we confirm Conjecture \ref{p-conj1.4} for smooth complete 
toric threefolds of Picard number at most five. 
If the Picard number of a smooth complete toric variety is less than four, 
then the variety is always projective (see Remark \ref{p-rem1.6}). 
So, first, we quickly review the famous smooth non-projective 
complete toric threefold of Picard number four 
described in \cite[Proposition 9.4]{oda1}. 

\begin{ex}\label{p-ex4.1}
Let $W:=X_\Sigma$ be the smooth complete toric threefold associated to 
the fan $\Sigma$ in $\mathbb{R}^3$ whose one-dimensional cones are 
generated by 
\[
v_1 = (1,0,0),  v_2 = (0,1,0), 
v_3 = (0,0,1),v_4 = (-1,-1,-1), 
\]
\[
v_5 = (-1,-1,0),  v_6 = (0,-1,-1), 
v_7 = (-1,0,-1),
\]
and the three-dimensional cones of $\Sigma$ are 
\[
\<v_1, v_2, v_3 \>,  \<v_1, v_2, v_7 \>,  \<v_1, v_3, v_6 \>, 
\<v_1, v_6, v_7 \>,  \<v_2, v_3, v_5 \>, 
\]
\[
 \<v_2, v_5, v_7 \>,  \<v_3, v_5, v_6 \>, 
\<v_4, v_5, v_6 \>,  \<v_4, v_5, v_7 \>,  \<v_4, v_6, v_7 \>.
\]
The configuration of cones of $\Sigma$ are as the following diagram: 
\begin{center}
\begin{tikzpicture}

\coordinate (D) at (4*0.5,3.8*0.5);
\coordinate (C) at (2.6*0.5,1.5*0.5);
\coordinate (B) at (5.4*0.5,1.5*0.5);
\coordinate (G) at (0,0);
\coordinate (E) at (4*0.5,6.8*0.5);
\coordinate (F) at (8*0.5,0);
\coordinate (A) at (2,2.3*0.5);

\draw (A)--(F);

\draw (A)--(G);

\draw (A)--(E);

\draw (B)--(G);
\draw (C)--(E);
\draw (D)--(F);

\draw (B)--(C)--(D)--cycle;
\draw (E)--(F)--(G)--cycle;

\draw (G) node[left] {$v_1$};
\draw (F) node[right] {$v_2$};
\draw (E) node[above] {$v_3$};
\draw (A) node[right] {$v_4$};
\draw (D) node[right] {$v_5$};
\draw (C) node[left] {$v_6$};
\draw (B) node[below] {$v_7$};

\end{tikzpicture}
\end{center}
We should remark that there is the extra three-dimensional cone 
$\langle v_1,v_2,v_3\rangle$. 
The variety $W$ is the smooth complete toric threefold 
of type [7-5] in \cite{oda1}. 

One can easily see there exist exactly three flopping curves 
\[
V(\<v_1,v_7\>),\ V(\<v_2,v_5\>)\mbox{ and }V(\<v_3,v_6\>), 
\]
that is, there are primitive relations 
\[
v_2+v_6=v_1+v_7,\ v_3+v_7=v_2+v_5\mbox{ and }v_1+v_5=v_3+v_6,
\]
correspondingly. This configuration of cones associated to the three 
flopping curves causes the non-projectivity of $W$. 
More precisely, the above primitive relations 
and Lemma \ref{p-lem2.9} imply that there is no 
effective ample Cartier divisor on $W$. 
Hence $W$ is non-projective. As remarked in \cite[p.71]{oda1}, 
after one of these three flops, $W$ becomes projective. 
In particular, Conjecture \ref{p-conj1.4} is true for $W$. 
We remark that among smooth complete toric threefolds of Picard number four, 
only $W$ is non-projective by the classification in \cite{oda1}. 
\end{ex}

Next, we consider the case of Picard number five. 
By the classification in \cite[Theorem 9.6]{oda1}, it is well known that 
there exist exactly eleven types 
\[
\mbox{
[8-2], 
[8-5$'$], 
[8-5$''$], 
[8-8], 
[8-10], 
[8-11], 
[8-12], 
[8-13$'$], 
[8-13$''$], 
[8-14$'$] and 
[8-14$''$]}
\]
of smooth complete toric threefolds of Picard number five 
which cannot be blown-down to smooth threefolds. Among them, 
[8-2], 
[8-10] and 
[8-11] 
are projective. 
Let us start with the proof of 
the projectivity of these varieties for reader's convenience. 
We emphasize that 
the projectivity of 
the smooth complete toric threefolds of type [8-11] 
was not announced in \cite{oda1}. 
In the following, we use the notation in \cite{fujino-payne} 
which is slightly different from the one in \cite{oda1}. 

\bigskip

\noindent
{\bf [8-2]}\quad 
Let $a\in\mathbb{Z}$ and $Z_{2}(a):=X_\Sigma$ 
the smooth complete toric threefold associated to 
the fan $\Sigma$ in $\mathbb{R}^3$ whose one-dimensional cones are 
generated by 
\[
v_1 = (1,0,0),  v_2 = (0,-2,-1), 
v_3 = (0,-1,0),v_4 = (0,0,1), 
\]
\[
v_5 = (0,1,a),  v_6 = (0,0,-1), 
v_7 = (0,-1,-1),  v_8 = (-1,-3,-2),
\]
and the three-dimensional cones of $\Sigma$ are 
\[
\<v_1, v_2, v_3 \>,  \<v_1, v_2, v_8 \>,  \<v_1, v_3, v_4 \>, 
\<v_1, v_4, v_5 \>,  \<v_1, v_5, v_6 \>, 
 \<v_1, v_6, v_7 \>,  
 \]
 \[
 \<v_1, v_7, v_8 \>, 
\<v_2, v_3, v_8 \>,  \<v_3, v_4, v_8 \>,  \<v_4, v_5, v_8 \>,  \<v_5, v_6, v_8\>,
\<v_6,v_7,v_8\>.
\]
The configuration of cones of $\Sigma$ are as the following diagram: 
\begin{center}
\begin{tikzpicture}

\coordinate (H) at (2,1.5*0.5);
\coordinate (D) at (0,0);
\coordinate (A) at (4*0.5,6.8*0.5);
\coordinate (E) at (8*0.5,0);

\coordinate (C) at (1,1);
\coordinate (B) at (1.5,1.5);

\coordinate (G) at (4-1.5,1.5);
\coordinate (F) at (4-1,1);

\draw (H)--(A);
\draw (H)--(D);
\draw (H)--(E);

\draw (C)--(D);
\draw (C)--(A);
\draw (C)--(H);

\draw (B)--(C);
\draw (B)--(A);
\draw (B)--(H);

\draw (F)--(E);
\draw (F)--(H);
\draw (F)--(A);

\draw (G)--(A);
\draw (G)--(H);
\draw (G)--(E);

\draw (A)--(D)--(E)--cycle;

\draw (A) node[above] {$v_1$};
\draw (B) node[below] {$v_2$};
\draw (C) node[below] {$v_3$};
\draw (D) node[left] {$v_4$};
\draw (E) node[right] {$v_5$};
\draw (F) node[below] {$v_6$};

\draw (G) node[left] {$v_7$};
\draw (H) node[below] {$v_8$};

\end{tikzpicture}
\end{center}
There is the extra three-dimensional cone 
$\langle v_1,v_4,v_5\rangle$. 

We obtain the sequence 
\[
Z_2(a)\stackrel{\varphi_1}{\longrightarrow} 
Y_1\stackrel{\varphi_2}{\longrightarrow} Y_2\stackrel{\varphi_3}{\longrightarrow} Y_3
\]
of birational morphisms associated to the diagrams 
\begin{center}
\begin{tikzpicture}
\coordinate (H) at (2*2/3,1.5*0.5*2/3);
\coordinate (D) at (0,0);
\coordinate (A) at (4*0.5*2/3,6.8*0.5*2/3);
\coordinate (E) at (8*0.5*2/3,0);

\coordinate (C) at (1*2/3,1*2/3);
\coordinate (B) at (1.5*2/3,1.5*2/3);

\coordinate (G) at (2.5*2/3,1.5*2/3);
\coordinate (F) at (3*2/3,1*2/3);

\draw (H)--(A);
\draw (H)--(D);
\draw (H)--(E);

\draw (C)--(D);
\draw (C)--(A);
\draw (C)--(H);

\draw (B)--(C);
\draw (B)--(A);
\draw (B)--(H);

\draw (F)--(E);
\draw (F)--(H);
\draw (F)--(A);

\draw (G)--(A);
\draw (G)--(H);
\draw (G)--(E);

\draw (A)--(D)--(E)--cycle;

\fill[black] (B) circle (0.07);

\draw (4*2/3,4/3) node[right] {$\longrightarrow$};

\end{tikzpicture}
\begin{tikzpicture}
\coordinate (H) at (2*2/3,1.5*0.5*2/3);
\coordinate (D) at (0,0);
\coordinate (A) at (4*0.5*2/3,6.8*0.5*2/3);
\coordinate (E) at (8*0.5*2/3,0);

\coordinate (C) at (1*2/3,1*2/3);
\coordinate (B) at (1.5*2/3,1.5*2/3);

\coordinate (G) at (2.5*2/3,1.5*2/3);
\coordinate (F) at (3*2/3,1*2/3);

\draw (H)--(A);
\draw (H)--(D);
\draw (H)--(E);

\draw (C)--(D);
\draw (C)--(A);
\draw (C)--(H);

\draw (F)--(E);
\draw (F)--(H);
\draw (F)--(A);

\draw (G)--(A);
\draw (G)--(H);
\draw (G)--(E);

\draw (A)--(D)--(E)--cycle;

\fill[black] (C) circle (0.07);

\draw (4*2/3,4/3) node[right] {$\longrightarrow$};

\end{tikzpicture}
\begin{tikzpicture}
\coordinate (H) at (2*2/3,1.5*0.5*2/3);
\coordinate (D) at (0,0);
\coordinate (A) at (4*0.5*2/3,6.8*0.5*2/3);
\coordinate (E) at (8*0.5*2/3,0);

\coordinate (C) at (1*2/3,1*2/3);
\coordinate (B) at (1.5*2/3,1.5*2/3);

\coordinate (G) at (2.5*2/3,1.5*2/3);
\coordinate (F) at (3*2/3,1*2/3);

\draw (H)--(A);
\draw (H)--(D);
\draw (H)--(E);

\draw (F)--(E);
\draw (F)--(H);
\draw (F)--(A);

\draw (G)--(A);
\draw (G)--(H);
\draw (G)--(E);

\draw (A)--(D)--(E)--cycle;

\fill[black] (G) circle (0.07);

\draw (4*2/3,4/3) node[right] {$\longrightarrow$};

\end{tikzpicture}
\begin{tikzpicture}
\coordinate (H) at (2*2/3,1.5*0.5*2/3);
\coordinate (D) at (0,0);
\coordinate (A) at (4*0.5*2/3,6.8*0.5*2/3);
\coordinate (E) at (8*0.5*2/3,0);

\coordinate (C) at (1*2/3,1*2/3);
\coordinate (B) at (1.5*2/3,1.5*2/3);

\coordinate (G) at (2.5*2/3,1.5*2/3);
\coordinate (F) at (3*2/3,1*2/3);

\draw (H)--(A);
\draw (H)--(D);
\draw (H)--(E);

\draw (F)--(E);
\draw (F)--(H);
\draw (F)--(A);

\draw (A)--(D)--(E)--cycle;
\end{tikzpicture}
\end{center}
of fans in $\mathbb{R}^3$. The morphisms 
$\varphi_1$, $\varphi_2$ and $\varphi_3$ are associated to 
the star subdivisions along 
\[
v_2=\frac{1}{2}(v_1+v_3+v_8),\ v_3=\frac{1}{3}(v_1+2v_4+v_8)\mbox{ and }
v_7=\frac{1}{3}(v_1+v_6+v_8)
\] 
of fans, respectively.
Namely, 
$\varphi_1$, $\varphi_2$ and $\varphi_3$ are divisorial contractions which contract 
$V(\<v_2\>)\subset Z_2(a)$, $V(\<v_3\>)\subset Y_1$ and $V(\<v_7\>)\subset Y_2$, respectively. 
Since $Y_3$ is $\mathbb{Q}$-factorial and its Picard number is two, 
$Y_3$, $Y_2$, $Y_1$ and $Z_2(a)$ are projective (see Remark \ref{p-rem1.6}). 

\bigskip

\noindent
{\bf [8-10]}\quad 
Let $Z_{10}:=X_\Sigma$ be the smooth complete toric threefold associated to 
the fan $\Sigma$ in $\mathbb{R}^3$ whose one-dimensional cones are 
generated by 
\[
v_1 = (0,0,1),  v_2 = (-1,-1,-1), 
v_3 = (0,-1,-2),v_4 = (0,0,-1), 
\]
\[
v_5 = (0,1,0),  v_6 = (1,0,-1), 
v_7 = (1,0,0),  v_8 = (1,-1,-2),
\]
and the three-dimensional cones of $\Sigma$ are 
\[
\<v_1, v_2, v_5 \>,  \<v_1, v_2, v_7 \>,  \<v_1, v_5, v_7 \>,  \<v_2, v_3, v_4\>, 
\<v_2, v_3, v_8 \>,  \<v_2, v_4, v_5 \>, 
\]
\[
 \<v_2, v_7, v_8 \>,  \<v_3, v_4, v_8 \>, 
\<v_4, v_5, v_8 \>,  \<v_5, v_6, v_7 \>,  \<v_5, v_6, v_8 \>,  \<v_6, v_7, v_8\>.
\]
The configuration of cones of $\Sigma$ are as the following diagram: 
\begin{center}
\begin{tikzpicture}

\coordinate (A) at (2.3,0.5);
\coordinate (B) at (0,0);
\coordinate (H) at (4*0.5,6.8*0.5);
\coordinate (G) at (8*0.5,0);

\coordinate (C) at (1.25,1.2);
\coordinate (E) at (2.3,1.0);

\coordinate (D) at (1.7,1.2);

\coordinate (F) at (2.5,1.7);

\draw (E)--(B);
\draw (E)--(G);
\draw (E)--(H);

\draw (A)--(B);
\draw (A)--(E);
\draw (A)--(G);

\draw (F)--(E);
\draw (F)--(G);
\draw (F)--(H);

\draw (C)--(B);
\draw (C)--(D);
\draw (C)--(H);

\draw (D)--(H);
\draw (D)--(E);
\draw (D)--(B);

\draw (B)--(G)--(H)--cycle;

\draw (A) node[below] {$v_1$};
\draw (B) node[left] {$v_2$};
\draw (C) node[left] {$v_3$};
\draw (D) node[below] {$v_4$};
\draw (E) node[right] {$v_5$};
\draw (F) node[right] {$v_6$};

\draw (G) node[right] {$v_7$};
\draw (H) node[above] {$v_8$};

\end{tikzpicture}
\end{center}
There is the extra three-dimensional cone 
$\langle v_2,v_7,v_8\rangle$. 

We obtain the sequence 
\[
Z_{10}\stackrel{\varphi_1}{\longrightarrow} 
Y_1\stackrel{\varphi_2}{\longrightarrow} Y_2\stackrel{\varphi_3}{\longrightarrow} Y_3
\]
of birational morphisms associated to the diagrams 
\begin{center}
\begin{tikzpicture}
\coordinate (A) at (2.3*2/3,0.5*2/3);
\coordinate (B) at (0,0);
\coordinate (H) at (4*0.5*2/3,6.8*0.5*2/3);
\coordinate (G) at (8*0.5*2/3,0);

\coordinate (C) at (1.25*2/3,1.2*2/3);
\coordinate (E) at (2.3*2/3,1.0*2/3);

\coordinate (D) at (1.7*2/3,1.2*2/3);

\coordinate (F) at (2.5*2/3,1.7*2/3);

\draw (E)--(B);
\draw (E)--(G);
\draw (E)--(H);

\draw (A)--(B);
\draw (A)--(E);
\draw (A)--(G);

\draw (F)--(E);
\draw (F)--(G);
\draw (F)--(H);

\draw (C)--(B);
\draw (C)--(D);
\draw (C)--(H);

\draw (D)--(H);
\draw (D)--(E);
\draw (D)--(B);

\draw (B)--(G)--(H)--cycle;

\fill[black] (C) circle (0.07);

\draw (4*2/3,4/3) node[right] {$\longrightarrow$};

\end{tikzpicture}
\begin{tikzpicture}
\coordinate (A) at (2.3*2/3,0.5*2/3);
\coordinate (B) at (0,0);
\coordinate (H) at (4*0.5*2/3,6.8*0.5*2/3);
\coordinate (G) at (8*0.5*2/3,0);

\coordinate (C) at (1.25*2/3,1.2*2/3);
\coordinate (E) at (2.3*2/3,1.0*2/3);

\coordinate (D) at (1.7*2/3,1.2*2/3);

\coordinate (F) at (2.5*2/3,1.7*2/3);

\draw (E)--(B);
\draw (E)--(G);
\draw (E)--(H);

\draw (A)--(B);
\draw (A)--(E);
\draw (A)--(G);

\draw (F)--(E);
\draw (F)--(G);
\draw (F)--(H);

\draw (D)--(H);
\draw (D)--(E);
\draw (D)--(B);

\draw (B)--(G)--(H)--cycle;

\fill[black] (D) circle (0.07);

\draw (4*2/3,4/3) node[right] {$\longrightarrow$};

\end{tikzpicture}
\begin{tikzpicture}
\coordinate (A) at (2.3*2/3,0.5*2/3);
\coordinate (B) at (0,0);
\coordinate (H) at (4*0.5*2/3,6.8*0.5*2/3);
\coordinate (G) at (8*0.5*2/3,0);

\coordinate (C) at (1.25*2/3,1.2*2/3);
\coordinate (E) at (2.3*2/3,1.0*2/3);

\coordinate (D) at (1.7*2/3,1.2*2/3);

\coordinate (F) at (2.5*2/3,1.7*2/3);

\draw (E)--(B);
\draw (E)--(G);
\draw (E)--(H);

\draw (A)--(B);
\draw (A)--(E);
\draw (A)--(G);

\draw (F)--(E);
\draw (F)--(G);
\draw (F)--(H);

\draw (B)--(G)--(H)--cycle;

\fill[black] (F) circle (0.07);

\draw (4*2/3,4/3) node[right] {$\longrightarrow$};

\end{tikzpicture}
\begin{tikzpicture}
\coordinate (A) at (2.3*2/3,0.5*2/3);
\coordinate (B) at (0,0);
\coordinate (H) at (4*0.5*2/3,6.8*0.5*2/3);
\coordinate (G) at (8*0.5*2/3,0);

\coordinate (C) at (1.25*2/3,1.2*2/3);
\coordinate (E) at (2.3*2/3,1.0*2/3);

\coordinate (D) at (1.7*2/3,1.2*2/3);

\coordinate (F) at (2.5*2/3,1.7*2/3);

\draw (E)--(B);
\draw (E)--(G);
\draw (E)--(H);

\draw (A)--(B);
\draw (A)--(E);
\draw (A)--(G);

\draw (B)--(G)--(H)--cycle;

\end{tikzpicture}
\end{center}
of fans in $\mathbb{R}^3$. The morphisms 
$\varphi_1$, $\varphi_2$ and $\varphi_3$ are associated to 
the star subdivisions along 
\[
v_3=\frac{1}{2}\left(v_2+v_4+v_8\right),\ v_4=\frac{1}{3}\left(v_2+2v_5+v_8\right)\mbox{ and }
v_6=\frac{1}{2}\left(v_5+v_7+v_8\right)
\] 
of fans, respectively. 
Namely, 
$\varphi_1$, $\varphi_2$ and $\varphi_3$ are divisorial contractions which contract 
$V(\<v_3\>)\subset Z_{10}$, $V(\<v_4\>)\subset Y_1$ and $V(\<v_6\>)\subset Y_2$, respectively. 
Since $Y_3$ is $\mathbb{Q}$-factorial and its Picard number is two, 
$Y_3$, $Y_2$, $Y_1$ and $Z_{10}$ are projective (see Remark \ref{p-rem1.6}). 

\bigskip

\noindent
{\bf [8-11]}\quad 
Let $a,b\in\mathbb{Z}$ and $Z_{11}(a,b):=X_\Sigma$ 
the smooth complete toric threefold associated to 
the fan $\Sigma$ in $\mathbb{R}^3$ whose one-dimensional cones are 
generated by 
\[
v_1 = (1,0,0),  v_2 = (0,-1,0), 
v_3 = (0,0,1),v_4 = (1,1,a), 
\]
\[
v_5 = (0,0,-1),  v_6 = (0,-1,-1), 
v_7 = (-1,-2,-1),  v_8 = (0,1,b),
\]
and the three-dimensional cones of $\Sigma$ are 
\[
\<v_1, v_2, v_3 \>,  \<v_1, v_3, v_4 \>,  \<v_1, v_4, v_5 \>,  \<v_1, v_5, v_6\>, 
\<v_1, v_6, v_7 \>,  \<v_1, v_2, v_7 \>, 
\]
\[
 \<v_2, v_3, v_7 \>,  \<v_5, v_6, v_7 \>, 
\<v_3, v_4, v_8 \>,  \<v_4, v_5, v_8 \>,  \<v_5, v_7, v_8 \>,  \<v_3, v_7, v_8\>.
\]
The configuration of cones of $\Sigma$ are as the following diagram: 
\begin{center}
\begin{tikzpicture}

\coordinate (F) at (1.2,1.2);
\coordinate (E) at (0,0);
\coordinate (H) at (4,0);
\coordinate (G) at (2.8,1.2);
\coordinate (B) at (2.8,2.8);
\coordinate (A) at (1.2,2.8);
\coordinate (D) at (0,4);
\coordinate (C) at (4,4);

\draw (A)--(G);

\draw (D)--(A);

\draw (H)--(G);

\draw (C)--(A);
\draw (C)--(B);
\draw (C)--(G);

\draw (E)--(A);
\draw (E)--(F);
\draw (E)--(G);

\draw (A)--(B)--(G)--(F)--cycle;
\draw (D)--(C)--(H)--(E)--cycle;

\draw (A) node[left] {$v_1$};
\draw (B) node[right] {$v_2$};
\draw (C) node[right] {$v_3$};
\draw (D) node[left] {$v_4$};
\draw (E) node[left] {$v_5$};
\draw (F) node[left] {$v_6$};

\draw (G) node[right] {$v_7$};
\draw (H) node[right] {$v_8$};

\end{tikzpicture}
\end{center}
In this case, there are two extra three-dimensional cones 
$\langle v_4,v_5,v_8\rangle$ and $\langle v_3,v_4,v_8\rangle$. 

We obtain the sequence 
\[
Z_{11}(a,b)\stackrel{\varphi_1}{\longrightarrow} 
Y_1\stackrel{\varphi_2}{\longrightarrow} Y_2
\]
of birational morphisms associated to the diagrams 
\begin{center}
\begin{tikzpicture}
\coordinate (F) at (1.2*2/3,1.2*2/3);
\coordinate (E) at (0,0);
\coordinate (H) at (4*2/3,0);
\coordinate (G) at (2.8*2/3,1.2*2/3);
\coordinate (B) at (2.8*2/3,2.8*2/3);
\coordinate (A) at (1.2*2/3,2.8*2/3);
\coordinate (D) at (0,4*2/3);
\coordinate (C) at (4*2/3,4*2/3);

\draw (A)--(G);

\draw (D)--(A);

\draw (H)--(G);

\draw (C)--(A);
\draw (C)--(B);
\draw (C)--(G);

\draw (E)--(A);
\draw (E)--(F);
\draw (E)--(G);

\draw (A)--(B)--(G)--(F)--cycle;
\draw (D)--(C)--(H)--(E)--cycle;

\fill[black] (B) circle (0.07);

\draw (4*2/3,4/3) node[right] {$\longrightarrow$};

\end{tikzpicture}
\begin{tikzpicture}
\coordinate (F) at (1.2*2/3,1.2*2/3);
\coordinate (E) at (0,0);
\coordinate (H) at (4*2/3,0);
\coordinate (G) at (2.8*2/3,1.2*2/3);
\coordinate (B) at (2.8*2/3,2.8*2/3);
\coordinate (A) at (1.2*2/3,2.8*2/3);
\coordinate (D) at (0,4*2/3);
\coordinate (C) at (4*2/3,4*2/3);

\draw (A)--(G);

\draw (D)--(A);

\draw (H)--(G);

\draw (C)--(A);

\draw (C)--(G);

\draw (E)--(A);
\draw (E)--(F);
\draw (E)--(G);

\draw (A)--(G)--(F)--cycle;
\draw (D)--(C)--(H)--(E)--cycle;

\fill[black] (F) circle (0.07);

\draw (4*2/3,4/3) node[right] {$\longrightarrow$};

\end{tikzpicture}
\begin{tikzpicture}
\coordinate (F) at (1.2*2/3,1.2*2/3);
\coordinate (E) at (0,0);
\coordinate (H) at (4*2/3,0);
\coordinate (G) at (2.8*2/3,1.2*2/3);
\coordinate (B) at (2.8*2/3,2.8*2/3);
\coordinate (A) at (1.2*2/3,2.8*2/3);
\coordinate (D) at (0,4*2/3);
\coordinate (C) at (4*2/3,4*2/3);

\draw (A)--(G);

\draw (D)--(A);

\draw (H)--(G);

\draw (C)--(A);

\draw (C)--(G);

\draw (E)--(A);

\draw (E)--(G);

\draw (A)--(G);
\draw (D)--(C)--(H)--(E)--cycle;

\end{tikzpicture}
\end{center}
of fans in $\mathbb{R}^3$. The morphisms 
$\varphi_1$ and $\varphi_2$ are associated to 
the star subdivisions along 
\[
v_2=\frac{1}{2}(v_1+v_3+v_7)\mbox{ and }
v_6=\frac{1}{2}(v_1+v_5+v_7)
\] 
of fans, respectively. 
Namely, 
$\varphi_1$ and $\varphi_2$ are divisorial contractions which contract 
$V(\<v_2\>)\subset Z_{11}(a,b)$ and $V(\<v_6\>)\subset Y_1$, respectively. 
Since $v_3+v_5=0$, there exists a surjective toric morphism $\phi:Y_2\to S$ 
whose every 
fiber is isomorphic to $\mathbb{P}^1$ set-theoretically, where $S$ is 
a complete toric surface. 
Therefore, $\phi$ is a projective morphism since we can easily find 
a $\phi$-ample Cartier divisor on $Y$. 
Thus, $Y_2$, $Y_1$ and $Z_{11}(a,b)$ are projective. 

\bigskip

From now, 
we confirm our Conjecture \ref{p-conj1.4} for remaining eight 
types of 
smooth complete toric threefolds of Picard number five. 
The varieties of these types are non-projective except for a few values. 
In fact, for a few values of parameters $a,b,c,d\in\mathbb{Z}$ below, the varieties 
become projective. 

\bigskip

\noindent
{\bf [8-5$'$]}\quad 
Let $a\in\mathbb{Z}$ and $Z'_5(a):=X_\Sigma$ the smooth complete toric threefold associated to 
the fan $\Sigma$ in $\mathbb{R}^3$ whose one-dimensional cones are 
generated by 
\[
v_1=(1,0,0),v_2=(0,1,0),v_3=(0,0,1),v_4=(0,-1,-a),
\]
\[
v_5=(0,0,-1),v_6=(-1,1,-1),v_7=(-1,0,-1),v_8=(-1,-1,0),
\]
and the three-dimensional cones of $\Sigma$ are 
\[
   \< v_{1}, v_{2}, v_{3} \>,  \< v_{1}, v_{3}, v_{4} \>, 
    \< v_{1}, v_{4}, v_{5} \>,  \< v_{1}, v_{5}, v_{6} \>, 
    \< v_{1}, v_{2}, v_{6} \>,  \< v_{2}, v_{3}, v_{8} \>,
    \]
    \[
    \< v_{3}, v_{4}, v_{8} \>,  \< v_{4}, v_{5}, v_{8} \>, 
    \<v_5, v_6, v_7\>,  \<v_5, v_7, v_8 \>, 
    \<v_6, v_7, v_8\>,  \<v_2, v_6, v_8 \>.
\]
The configuration of cones of $\Sigma$ are as the following diagram: 
\begin{center}
\begin{tikzpicture}

\coordinate (F) at (4*0.5,3.8*0.5);
\coordinate (H) at (2.6*0.5,1.5*0.5);
\coordinate (E) at (5.4*0.5,1.5*0.5);
\coordinate (C) at (2,6.8*0.5+1.15);
\coordinate (D) at (0,0);
\coordinate (B) at (4*0.5,6.8*0.5);
\coordinate (A) at (8*0.5,0);
\coordinate (G) at (2,2.3*0.5);

\draw (G)--(E);
\draw (G)--(F);
\draw (G)--(H);

\draw (A)--(B);
\draw (B)--(H);

\draw (B)--(C);
\draw (C)--(H);

\draw (A)--(E);
\draw (D)--(H);
\draw (B)--(F);

\draw (A)--(F);
\draw (B)--(H);
\draw (D)--(E);

\draw (E)--(F)--(H)--cycle;
\draw (A)--(C)--(D)--cycle;

\draw (A) node[right] {$v_1$};
\draw (B) node[right] {$v_2$};
\draw (C) node[above] {$v_3$};
\draw (D) node[left] {$v_4$};
\draw (E) node[below] {$v_5$};
\draw (F) node[right] {$v_6$};

\draw (G) node[below] {$v_7$};
\draw (H) node[left] {$v_8$};

\end{tikzpicture}
\end{center}
There is the extra three-dimensional cone 
$\langle v_1,v_3,v_4\rangle$. 

We have already proved that $Z'_5(a)$ is non-projective for 
$a\ne 0, -1$ in \cite[Example 2]{fujino-payne} (see Remark \ref{p-rem4.2} below). 
If $a=-1$, then we have primitive relations 
\[v_1+v_8=v_4+v_5, \quad 
v_4+v_6=v_2+v_8, \quad
\text{and}\quad
v_2+v_5=v_1+v_6.   
\] 
Thus $Z'_5(-1)$ is non-projective 
by the above primitive relations and Lemma \ref{p-lem2.9}. 
If $a=0$, then we have 
a primitive relation 
\[ v_1+v_8=v_4. 
\] 
Then we have a contraction $Z'_5(0)\to V$ by removing $v_4$. 
By construction, $V$ is a smooth 
complete toric threefold with Picard number four. 
Since $v_3+v_5=0$, $V$ is not isomorphic 
to $W$ in Example \ref{p-ex4.1}. 
This means that $V$ and $Z'_5(0)$ are projective. 

We obtain the sequence 
\[
Z'_5(a)\stackrel{\psi}{\dasharrow} X'\stackrel{\varphi_1}{\longrightarrow} Y_1\stackrel{\varphi_2}{\longrightarrow} Y_2
\]
of birational maps associated to the diagrams 
\begin{center}
\begin{tikzpicture}
\coordinate (F) at (4*0.5*2/3,3.8*0.5*2/3);
\coordinate (H) at (2.6*0.5*2/3,1.5*0.5*2/3);
\coordinate (E) at (5.4*0.5*2/3,1.5*0.5*2/3);
\coordinate (C) at (2*2/3,6.8*0.5*2/3+1.15*2/3);
\coordinate (D) at (0,0);
\coordinate (B) at (4*0.5*2/3,6.8*0.5*2/3);
\coordinate (A) at (8*0.5*2/3,0);
\coordinate (G) at (2*2/3,2.3*0.5*2/3);

\draw (G)--(E);
\draw (G)--(F);
\draw (G)--(H);

\draw (A)--(B);
\draw (B)--(H);

\draw (B)--(C);
\draw (C)--(H);

\draw (A)--(E);
\draw (D)--(H);
\draw (B)--(F);

\draw [very thick] (A)--(F);
\draw (B)--(H);
\draw (D)--(E);

\draw (E)--(F)--(H)--cycle;
\draw (A)--(C)--(D)--cycle;

\draw (8*0.5*2/3,4/3) node[right] {$\dashrightarrow$};

\end{tikzpicture}
\begin{tikzpicture}
\coordinate (F) at (4*0.5*2/3,3.8*0.5*2/3);
\coordinate (H) at (2.6*0.5*2/3,1.5*0.5*2/3);
\coordinate (E) at (5.4*0.5*2/3,1.5*0.5*2/3);
\coordinate (C) at (2*2/3,6.8*0.5*2/3+1.15*2/3);
\coordinate (D) at (0,0);
\coordinate (B) at (4*0.5*2/3,6.8*0.5*2/3);
\coordinate (A) at (8*0.5*2/3,0);
\coordinate (G) at (2*2/3,2.3*0.5*2/3);

\draw (G)--(E);
\draw (G)--(F);
\draw (G)--(H);

\draw (A)--(B);
\draw (B)--(H);

\draw (B)--(C);
\draw (C)--(H);

\draw (A)--(E);
\draw (D)--(H);
\draw (B)--(F);

\draw (B)--(E);
\draw (B)--(H);
\draw (D)--(E);

\draw (E)--(F)--(H)--cycle;
\draw (A)--(C)--(D)--cycle;

\draw (8*0.5*2/3,4/3) node[right] {$\longrightarrow$};

\fill[black] (F) circle (0.07);

\end{tikzpicture}
\begin{tikzpicture}
\coordinate (F) at (4*0.5*2/3,3.8*0.5*2/3);
\coordinate (H) at (2.6*0.5*2/3,1.5*0.5*2/3);
\coordinate (E) at (5.4*0.5*2/3,1.5*0.5*2/3);
\coordinate (C) at (2*2/3,6.8*0.5*2/3+1.15*2/3);
\coordinate (D) at (0,0);
\coordinate (B) at (4*0.5*2/3,6.8*0.5*2/3);
\coordinate (A) at (8*0.5*2/3,0);
\coordinate (G) at (2*2/3,2.3*0.5*2/3);

\draw (G)--(E);

\draw (G)--(H);

\draw (A)--(B);
\draw (B)--(H);

\draw (B)--(C);
\draw (C)--(H);

\draw (B)--(G);

\draw (A)--(E);
\draw (D)--(H);

\draw (B)--(E);
\draw (B)--(H);
\draw (D)--(E);

\draw (E)--(H);
\draw (A)--(C)--(D)--cycle;

\draw (8*0.5*2/3,4/3) node[right] {$\longrightarrow$};

\fill[black] (G) circle (0.07);

\end{tikzpicture}
\begin{tikzpicture}
\coordinate (F) at (4*0.5*2/3,3.8*0.5*2/3);
\coordinate (H) at (2.6*0.5*2/3,1.5*0.5*2/3);
\coordinate (E) at (5.4*0.5*2/3,1.5*0.5*2/3);
\coordinate (C) at (2*2/3,6.8*0.5*2/3+1.15*2/3);
\coordinate (D) at (0,0);
\coordinate (B) at (4*0.5*2/3,6.8*0.5*2/3);
\coordinate (A) at (8*0.5*2/3,0);
\coordinate (G) at (2*2/3,2.3*0.5*2/3);

\draw (A)--(B);
\draw (B)--(H);

\draw (B)--(C);
\draw (C)--(H);

\draw (A)--(E);
\draw (D)--(H);

\draw (B)--(E);
\draw (B)--(H);
\draw (D)--(E);

\draw (E)--(H);
\draw (A)--(C)--(D)--cycle;

\end{tikzpicture}
\end{center}
of fans in $\mathbb{R}^3$. The rational map 
$\psi$ is the flop associated to the primitive relation 
\[
v_2+v_5=v_1+v_6,
\]
that is, the fan corresponding to $X'$ is obtained from $\Sigma$ 
by removing $\<v_1,v_6\>$ and by adding $\<v_2,v_5\>$. 
The morphisms 
$\varphi_1$ and $\varphi_2$ are associated to 
the star subdivisions along 
\[
v_6=v_2+v_7\mbox{ and }
v_7=v_2+v_5+v_8
\] 
of fans, respectively. 
Namely, 
$\psi$  is the flop along $V(\langle v_1,v_6\rangle)$, 
$\varphi_1$ is the blow-up of $Y_1$ along the curve $V(\langle v_2,v_7\rangle)$ and 
$\varphi_2$ is the blow-up of $Y_2$ at the point $V(\langle v_2,v_5,v_8\rangle)$. 
Since $Y_2$ is smooth and its Picard number is three, 
$Y_2$, $Y_1$ and $X'$ are projective (see Remark \ref{p-rem1.6}). 
Conjecture \ref{p-conj1.4} is true for this case. 

\begin{rem}\label{p-rem4.2}
The variety $Z_5'(a)$ is studied in \cite[Example 2]{fujino-payne}, 
where we prove that $\NE(Z'_5(a))=\mathbb R^5$ holds when $a\ne 0, -1$. 
In particular, $Z'_5(a)$ is non-projective when $a\ne 0, -1$. 
We put $n=(1, 0, 0)$, $n'=(0, 1, 0)$, and 
$n''=(0, 0, 1)$. Then 
the above description of $Z'_5(a)$ coincides with 
[8-5$'$] in \cite[p.78]{oda1}. 
\end{rem}

Before addressing the other cases, we make a useful remark. 

\begin{rem}\label{p-rem4.3}
Almost all cases considered in the proof of Theorem \ref{p-thm1.5} follow 
a common strategy:~after taking a flop, we demonstrate 
the projectivity of the resulting variety by constructing a sequence 
of projective contraction morphisms terminating at a variety that is necessarily projective. 

We have already seen this method applied to the toric variety labeled 
as [8-5$'$] in \cite{oda1}. We will show 
that the same strategy is applicable 
to [8-5$'$], [8-5$''$], [8-8], [8-12], [8-14$'$], and [8-14$''$]. 

However, we note that additional work is required 
in the cases of [8-13$'$] and [8-13$''$]. 
\end{rem}

\bigskip

\noindent
{\bf [8-5$''$]}\quad 
Let $Z''_5:=X_\Sigma$ be the smooth complete toric threefold associated to 
the fan $\Sigma$ in $\mathbb{R}^3$ whose one-dimensional cones are 
generated by 
\[
v_1 = (0,1,0), v_2 = (0,-1,-1), 
 v_3 = (1,0,0),  v_4 = (0,0,1), 
\]
\[
  v_5 = (-1,0,-1),  v_6 = (-1,-2,-2), 
v_7 = (-1,-1,-1),  v_8 = (-1,-1,0), 
\]
and the three-dimensional cones of $\Sigma$ are 
\[
   \< v_{1}, v_{2}, v_{3} \>,  \< v_{1}, v_{3}, v_{4} \>, 
    \< v_{1}, v_{4}, v_{5} \>,  \< v_{1}, v_{5}, v_{6} \>, 
    \< v_{1}, v_{2}, v_{6} \>,  \< v_{2}, v_{3}, v_{8} \>, 
 \]
 \[
    \< v_{3}, v_{4}, v_{8} \>,  \< v_{4}, v_{5}, v_{8} \>, 
    \<v_5, v_6, v_7\>,  \<v_5, v_7, v_8 \>, 
    \<v_6, v_7, v_8\>,  \<v_2, v_6, v_8 \>.
\]
The configuration of cones of $\Sigma$ are as the following diagram: 
\begin{center}
\begin{tikzpicture}

\coordinate (A) at (4*0.5,3.8*0.5);
\coordinate (D) at (2.6*0.5,1.5*0.5);
\coordinate (C) at (5.4*0.5,1.5*0.5);
\coordinate (G) at (2,6.8*0.5+1.15);
\coordinate (H) at (0,0);
\coordinate (E) at (4*0.5,6.8*0.5);
\coordinate (B) at (8*0.5,0);
\coordinate (F) at (4+2*0.5,-1.14*0.5);

\draw (A)--(G);
\draw (D)--(H);
\draw (C)--(F);

\draw (D)--(E);
\draw (A)--(F);
\draw (A)--(B);

\draw (B)--(H);
\draw (C)--(H);

\draw (A)--(C)--(D)--cycle;
\draw (E)--(F)--(H)--cycle;
\draw (F)--(G)--(H)--cycle;

\draw (A) node[right] {$v_1$};
\draw (B) node[below] {$v_2$};
\draw (C) node[right] {$v_3$};
\draw (D) node[left] {$v_4$};
\draw (E) node[right] {$v_5$};
\draw (F) node[right] {$v_6$};

\draw (G) node[above] {$v_7$};
\draw (H) node[left] {$v_8$};

\end{tikzpicture}
\end{center}
There is the extra three-dimensional cone 
$\langle v_6,v_7,v_8\rangle$. 

We have primitive relations 
\[v_1+v_8=v_4+v_5, \quad 
v_2+v_4=v_3+v_8, \quad 
\text{and} 
\quad 
v_3+v_5=v_1+v_2. 
\] 
Thus we obtain that $Z''_5$ is non-projective 
by the above primitive relations and Lemma \ref{p-lem2.9}. 

We obtain the sequence 
\[
Z''_5\stackrel{\psi}{\dasharrow} X'\stackrel{\varphi_1}{\longrightarrow} Y_1\stackrel{\varphi_2}{\longrightarrow} Y_2
\]
of birational maps associated to the diagrams 
\begin{center}
\begin{tikzpicture}
\coordinate (A) at (4*0.5*2/3,3.8*0.5*2/3);
\coordinate (D) at (2.6*0.5*2/3,1.5*0.5*2/3);
\coordinate (C) at (5.4*0.5*2/3,1.5*0.5*2/3);
\coordinate (G) at (2*2/3,6.8*0.5*2/3+1.15*2/3);
\coordinate (H) at (0,0);
\coordinate (E) at (4*0.5*2/3,6.8*0.5*2/3);
\coordinate (B) at (8*0.5*2/3,0);
\coordinate (F) at (4*2/3+2*0.5*2/3,-1.14*0.5*2/3);

\draw (A)--(G);
\draw (D)--(H);
\draw (C)--(F);

\draw [very thick] (D)--(E);
\draw (A)--(F);
\draw (A)--(B);

\draw (B)--(H);
\draw (C)--(H);

\draw (A)--(C)--(D)--cycle;
\draw (E)--(F)--(H)--cycle;
\draw (F)--(G)--(H)--cycle;

\draw (10/3,4/3) node[right] {$\dashrightarrow$};

\end{tikzpicture}
\begin{tikzpicture}
\coordinate (A) at (4*0.5*2/3,3.8*0.5*2/3);
\coordinate (D) at (2.6*0.5*2/3,1.5*0.5*2/3);
\coordinate (C) at (5.4*0.5*2/3,1.5*0.5*2/3);
\coordinate (G) at (2*2/3,6.8*0.5*2/3+1.15*2/3);
\coordinate (H) at (0,0);
\coordinate (E) at (4*0.5*2/3,6.8*0.5*2/3);
\coordinate (B) at (8*0.5*2/3,0);
\coordinate (F) at (4*2/3+2*0.5*2/3,-1.14*0.5*2/3);

\draw (A)--(G);
\draw (D)--(H);
\draw (C)--(F);

\draw (A)--(H);
\draw (A)--(F);
\draw (A)--(B);

\draw (B)--(H);
\draw (C)--(H);

\draw (A)--(C)--(D)--cycle;
\draw (E)--(F)--(H)--cycle;
\draw (F)--(G)--(H)--cycle;

\fill[black] (E) circle (0.07);

\draw (10/3,4/3) node[right] {$\longrightarrow$};

\end{tikzpicture}
\begin{tikzpicture}
\coordinate (A) at (4*0.5*2/3,3.8*0.5*2/3);
\coordinate (D) at (2.6*0.5*2/3,1.5*0.5*2/3);
\coordinate (C) at (5.4*0.5*2/3,1.5*0.5*2/3);
\coordinate (G) at (2*2/3,6.8*0.5*2/3+1.15*2/3);
\coordinate (H) at (0,0);
\coordinate (E) at (4*0.5*2/3,6.8*0.5*2/3);
\coordinate (B) at (8*0.5*2/3,0);
\coordinate (F) at (4*2/3+2*0.5*2/3,-1.14*0.5*2/3);

\draw (A)--(G);
\draw (D)--(H);
\draw (C)--(F);

\draw (A)--(H);
\draw (A)--(F);
\draw (A)--(B);

\draw (B)--(H);
\draw (C)--(H);

\draw (A)--(C)--(D)--cycle;

\draw (F)--(G)--(H)--cycle;

\fill[black] (F) circle (0.07);

\draw (10/3,4/3) node[right] {$\longrightarrow$};

\end{tikzpicture}
\begin{tikzpicture}
\coordinate (A) at (4*0.5*2/3,3.8*0.5*2/3);
\coordinate (D) at (2.6*0.5*2/3,1.5*0.5*2/3);
\coordinate (C) at (5.4*0.5*2/3,1.5*0.5*2/3);
\coordinate (G) at (2*2/3,6.8*0.5*2/3+1.15*2/3);
\coordinate (H) at (0,0);
\coordinate (E) at (4*0.5*2/3,6.8*0.5*2/3);
\coordinate (B) at (8*0.5*2/3,0);
\coordinate (F) at (4*2/3+2*0.5*2/3,-1.14*0.5*2/3);

\draw (A)--(G);
\draw (D)--(H);
\draw (C)--(B);

\draw (A)--(H);

\draw (A)--(B);

\draw (B)--(H);
\draw (C)--(H);

\draw (A)--(C)--(D)--cycle;

\draw (B)--(G)--(H)--cycle;

\draw [opacity=0] (0,-1.14/3)--(0,-1.14/3);

\end{tikzpicture}
\end{center}
of fans in $\mathbb{R}^3$. The rational map 
$\psi$ is the flop associated to the primitive relation 
\[
v_1+v_8=v_4+v_5,
\]
that is, the fan corresponding to $X'$ is obtained from $\Sigma$ 
by removing $\<v_4,v_5\>$ and by adding $\<v_1,v_8\>$. The morphisms 
$\varphi_1$ and $\varphi_2$ are associated to 
the star subdivisions along 
\[
v_5=v_1+v_7\mbox{ and }
v_6=v_2+v_7
\] 
of fans, respectively. 
Namely, $\psi$  is the flop along $V(\langle v_4,v_5\rangle)$, 
$\varphi_1$ is the blow-up of $Y_1$ along the curve $V(\langle v_1,v_7\rangle)$ and 
$\varphi_2$ is the blow-up of $Y_2$ along the curve $V(\langle v_2,v_7\rangle)$. 
Since $Y_2$ is smooth and its Picard number is three, 
$Y_2$, $Y_1$ and $X'$ are projective (see Remark \ref{p-rem1.6}). 
Conjecture \ref{p-conj1.4} is true for this case. 

\begin{rem}\label{p-rem4.4}
Let $Z''_5\dashrightarrow Z'$ be the 
flop along $V(\langle v_1,v_6\rangle)$ associated to the diagrams: 
\begin{center}
\begin{tikzpicture}
\coordinate (A) at (4*0.5*2/3,3.8*0.5*2/3);
\coordinate (D) at (2.6*0.5*2/3,1.5*0.5*2/3);
\coordinate (C) at (5.4*0.5*2/3,1.5*0.5*2/3);
\coordinate (G) at (2*2/3,6.8*0.5*2/3+1.15*2/3);
\coordinate (H) at (0,0);
\coordinate (E) at (4*0.5*2/3,6.8*0.5*2/3);
\coordinate (B) at (8*0.5*2/3,0);
\coordinate (F) at (4*2/3+2*0.5*2/3,-1.14*0.5*2/3);

\draw (A)--(G);
\draw (D)--(H);
\draw (C)--(F);

\draw (D)--(E);
\draw [very thick] (A)--(F);
\draw (A)--(B);

\draw (B)--(H);
\draw (C)--(H);

\draw (A)--(C)--(D)--cycle;
\draw (E)--(F)--(H)--cycle;
\draw (F)--(G)--(H)--cycle;

\draw (10/3,4/3) node[right] {$\dashrightarrow$};

\end{tikzpicture}
\begin{tikzpicture}
\coordinate (A) at (4*0.5*2/3,3.8*0.5*2/3);
\coordinate (D) at (2.6*0.5*2/3,1.5*0.5*2/3);
\coordinate (C) at (5.4*0.5*2/3,1.5*0.5*2/3);
\coordinate (G) at (2*2/3,6.8*0.5*2/3+1.15*2/3);
\coordinate (H) at (0,0);
\coordinate (E) at (4*0.5*2/3,6.8*0.5*2/3);
\coordinate (B) at (8*0.5*2/3,0);
\coordinate (F) at (4*2/3+2*0.5*2/3,-1.14*0.5*2/3);

\draw (A)--(G);
\draw (D)--(H);
\draw (C)--(F);

\draw (D)--(E);
\draw (B)--(E);
\draw (A)--(B);

\draw (B)--(H);
\draw (C)--(H);

\draw (A)--(C)--(D)--cycle;
\draw (E)--(F)--(H)--cycle;
\draw (F)--(G)--(H)--cycle;

\end{tikzpicture}
\end{center}
Namely, this flop is associated to the primitive relation 
\[
v_2+v_5=v_1+v_6.
\]
Let $Z'\to W'$ be the divisorial contraction 
which contracts $V(\langle v_6\rangle)$, that 
is, 
$Z'$ is obtained from $W'$ by the star subdivision along 
\[
v_6=v_2+v_7. 
\] 
Then $W'$ is isomorphic to $W$ in Example \ref{p-ex4.1}. 
This can be confirmed 
by the automorphism
\[
(x,y,z)\mapsto (z,y,x)
\]
of $\mathbb{R}^3$, and 
by the changing 
\[
1\mapsto2,2\mapsto 5,4\mapsto 1,5\mapsto 7,7\mapsto 4,8\mapsto 6
\]
of indices of $v_i$'s. 
In particular, $Z'$ is non-projective (see Remark \ref{p-rem4.8}). 
\end{rem}


\noindent
{\bf [8-8]}\quad 
Let $Z_8:=X_\Sigma$ be the smooth complete toric threefold associated to 
the fan $\Sigma$ in $\mathbb{R}^3$ whose one-dimensional cones are 
generated by 
\[
v_1 = (0,0,1), v_2 = (1,0,0), 
v_3 = (0,-1,-1), v_4 = (-1,-2,-1), 
\]
\[
v_5 = (0,1,0),  v_6 = (0,0,-1), 
v_7 = (-1,-2,-2),  v_8 = (-1,-1,-2), 
\]
and the three-dimensional cones of $\Sigma$ are 
\[
\<v_1, v_2, v_3\>,  \<v_1, v_3, v_4\>,  \<v_1, v_4, v_5\>,  \<v_1, v_2, v_5\>, 
\<v_2, v_5, v_6\>,  \<v_3, v_4, v_7\>, 
\]
\[
 \<v_2, v_3, v_8\>,  \<v_3, v_7, v_8\>, 
\<v_4, v_7, v_8\>,  \<v_4, v_5, v_8\>,  \<v_5, v_6, v_8\>,  \<v_2, v_6, v_8\>.
\]
The configuration of cones of $\Sigma$ are as the following diagram: 
\begin{center}
\begin{tikzpicture}

\coordinate (E) at (4*0.5,3.8*0.5);
\coordinate (B) at (2.6*0.5,1.5*0.5);
\coordinate (A) at (5.4*0.5,1.5*0.5);
\coordinate (F) at (3.6*0.5,3.8*0.5);
\coordinate (C) at (0,0);
\coordinate (H) at (4*0.5,6.8*0.5);
\coordinate (D) at (8*0.5,0);
\coordinate (G) at (4+2*0.5,-1.14*0.5);

\draw (G)--(A);
\draw (B)--(C);
\draw (E)--(H);

\draw (A)--(C);
\draw (B)--(H);

\draw (D)--(E);

\draw (B)--(F);
\draw (E)--(F);
\draw (H)--(F);

\draw (A)--(B)--(E)--cycle;
\draw (C)--(D)--(H)--cycle;
\draw (C)--(G)--(H)--cycle;

\draw (A) node[right] {$v_1$};
\draw (B) node[below] {$v_2$};
\draw (C) node[left] {$v_3$};
\draw (D) node[below] {$v_4$};
\draw (E) node[right] {$v_5$};
\draw (3.5*0.5,3.8*0.5) node[left] {$v_6$};

\draw (G) node[right] {$v_7$};
\draw (H) node[above] {$v_8$};

\end{tikzpicture}
\end{center}
There is the extra three-dimensional cone 
$\langle v_3,v_7,v_8\rangle$. 

We have primitive relations 
\[
v_1+v_8=v_4+v_5, \quad 
v_2+v_4=v_1+2v_3, \quad 
v_3+v_5=v_6, \quad  
\text{and} 
\quad 
v_3+v_6=v_2+v_8. 
\] 
Hence $Z_8$ is not projective by the above primitive relations 
and Lemma \ref{p-lem2.9}. 
Here, we should remark that the relation 
\[
2v_3+v_5=v_2+v_8
\]
obtained simply from the third and fourth primitive relations above 
makes it easier to understand the non-projectiveness for $Z_8$, though 
it is not a primitive relation. 

We obtain the sequence 
\[
Z_8\stackrel{\psi}{\dasharrow} X'\stackrel{\varphi_1}{\longrightarrow} Y_1\stackrel{\varphi_2}{\longrightarrow} Y_2
\]
of birational maps associated to the diagrams 
\begin{center}
\begin{tikzpicture}
\coordinate (E) at (4*0.5*2/3,3.8*0.5*2/3);
\coordinate (B) at (2.6*0.5*2/3,1.5*0.5*2/3);
\coordinate (A) at (5.4*0.5*2/3,1.5*0.5*2/3);
\coordinate (F) at (3.6*0.5*2/3,3.8*0.5*2/3);
\coordinate (C) at (0,0);
\coordinate (H) at (4*0.5*2/3,6.8*0.5*2/3);
\coordinate (D) at (8*0.5*2/3,0);
\coordinate (G) at (4*2/3+2*0.5*2/3,-1.14*0.5*2/3);

\draw (G)--(A);
\draw (B)--(C);
\draw (E)--(H);

\draw (A)--(C);
\draw (B)--(H);

\draw [very thick] (D)--(E);

\draw (B)--(F);
\draw (E)--(F);
\draw (H)--(F);

\draw (A)--(B)--(E)--cycle;
\draw (C)--(D)--(H)--cycle;
\draw (C)--(G)--(H)--cycle;

\draw (10/3,4/3) node[right] {$\dashrightarrow$};

\end{tikzpicture}
\begin{tikzpicture}
\coordinate (E) at (4*0.5*2/3,3.8*0.5*2/3);
\coordinate (B) at (2.6*0.5*2/3,1.5*0.5*2/3);
\coordinate (A) at (5.4*0.5*2/3,1.5*0.5*2/3);
\coordinate (F) at (3.6*0.5*2/3,3.8*0.5*2/3);
\coordinate (C) at (0,0);
\coordinate (H) at (4*0.5*2/3,6.8*0.5*2/3);
\coordinate (D) at (8*0.5*2/3,0);
\coordinate (G) at (4*2/3+2*0.5*2/3,-1.14*0.5*2/3);

\draw (G)--(A);
\draw (B)--(C);
\draw (E)--(H);

\draw (A)--(C);
\draw (B)--(H);

\draw (A)--(H);

\draw (B)--(F);
\draw (E)--(F);
\draw (H)--(F);

\draw (A)--(B)--(E)--cycle;
\draw (C)--(D)--(H)--cycle;
\draw (C)--(G)--(H)--cycle;

\fill[black] (D) circle (0.07);

\draw (10/3,4/3) node[right] {$\longrightarrow$};

\end{tikzpicture}
\begin{tikzpicture}
\coordinate (E) at (4*0.5*2/3,3.8*0.5*2/3);
\coordinate (B) at (2.6*0.5*2/3,1.5*0.5*2/3);
\coordinate (A) at (5.4*0.5*2/3,1.5*0.5*2/3);
\coordinate (F) at (3.6*0.5*2/3,3.8*0.5*2/3);
\coordinate (C) at (0,0);
\coordinate (H) at (4*0.5*2/3,6.8*0.5*2/3);
\coordinate (D) at (8*0.5*2/3,0);
\coordinate (G) at (4*2/3+2*0.5*2/3,-1.14*0.5*2/3);

\draw (G)--(A);
\draw (B)--(C);
\draw (E)--(H);

\draw (A)--(C);
\draw (B)--(H);

\draw (A)--(H);

\draw (B)--(F);
\draw (E)--(F);
\draw (H)--(F);

\draw (A)--(B)--(E)--cycle;
\draw (C)--(H);
\draw (C)--(G)--(H)--cycle;

\fill[black] (G) circle (0.07);

\draw (10/3,4/3) node[right] {$\longrightarrow$};

\end{tikzpicture}
\begin{tikzpicture}
\coordinate (E) at (4*0.5*2/3,3.8*0.5*2/3);
\coordinate (B) at (2.6*0.5*2/3,1.5*0.5*2/3);
\coordinate (A) at (5.4*0.5*2/3,1.5*0.5*2/3);
\coordinate (F) at (3.6*0.5*2/3,3.8*0.5*2/3);
\coordinate (C) at (0,0);
\coordinate (H) at (4*0.5*2/3,6.8*0.5*2/3);
\coordinate (D) at (8*0.5*2/3,0);
\coordinate (G) at (4*2/3+2*0.5*2/3,-1.14*0.5*2/3);

\draw (B)--(C);
\draw (E)--(H);

\draw (A)--(C);
\draw (B)--(H);

\draw (A)--(H);

\draw (B)--(F);
\draw (E)--(F);
\draw (H)--(F);

\draw (A)--(B)--(E)--cycle;
\draw (C)--(H);
\draw (C)--(H);

\draw [opacity=0] (0,-1.14/3)--(0,-1.14/3);

\end{tikzpicture}
\end{center}
of fans in $\mathbb{R}^3$. The rational map 
$\psi$ is the flop associated to the primitive relation 
\[
v_1+v_8=v_4+v_5,
\]
that is, the fan corresponding to $X'$ is obtained from $\Sigma$ 
by removing $\<v_4,v_5\>$ and by adding $\<v_1,v_8\>$. 
The morphisms 
$\varphi_1$ and $\varphi_2$ are associated to 
the star subdivisions along 
\[
v_4=v_1+v_7\mbox{ and }
v_7=v_1+v_3+v_8
\] 
of fans, respectively. 
Namely, $\psi$  is the flop along $V(\langle v_4,v_5\rangle)$, 
$\varphi_1$ is the blow-up of $Y_1$ along the curve $V(\langle v_1,v_7\rangle)$ and 
$\varphi_2$ is the blow-up of $Y_2$ at the point $V(\langle v_1,v_3,v_8\rangle)$. 
Since $Y_2$ is smooth and its Picard number is three, 
$Y_2$, $Y_1$ and $X'$ are projective (see Remark \ref{p-rem1.6}). 
Conjecture \ref{p-conj1.4} is true for this case. 

\bigskip

\noindent
{\bf [8-12]}\quad 
Let $Z_{12}:=X_\Sigma$ be the smooth complete toric threefold associated to 
the fan $\Sigma$ in $\mathbb{R}^3$ whose one-dimensional cones are 
generated by 
\[
 v_{1} = (1,0,0),  v_{2} = (0,1,0),  v_{3} = (0,0,1), v_{4} =
    (0,-1,-1), 
\]
\[
 v_{5} = (-1,0,-1),  v_{6} = (-2,-1,0),v_7 = (-1,-1,-1), v_8 = (-2,-1,-1),
\]
and the three-dimensional cones of $\Sigma$ are 
\[
\<v_1, v_2, v_3\>,  \<v_1, v_2, v_4\>,  \<v_1, v_3, v_6\>,  \<v_1, v_4, v_6 \>,
\<v_2, v_3, v_5\>,  \<v_2, v_4, v_5\>, 
\]
\[\<v_3, v_5, v_6\>,  \<v_4, v_5, v_7\>, 
\<v_4, v_6, v_7\>,  \<v_5, v_6, v_8\>,  \<v_5, v_7, v_8\>,  \<v_6, v_7, v_8\>.
\]
The configuration of cones of $\Sigma$ are as the following diagram: 
\begin{center}
\begin{tikzpicture}

\coordinate (B) at (4*0.5,3.8*0.5);
\coordinate (C) at (2.6*0.5,1.5*0.5);
\coordinate (A) at (5.4*0.5,1.5*0.5);
\coordinate (H) at (2,6.8*0.5+1.15);
\coordinate (F) at (0,0);
\coordinate (E) at (4*0.5,6.8*0.5);
\coordinate (D) at (8*0.5,0);
\coordinate (G) at (4+2*0.5,-1.14*0.5);

\draw (B)--(H);
\draw (A)--(G);
\draw (C)--(F);

\draw (C)--(E);
\draw (B)--(D);
\draw (A)--(F);

\draw (E)--(G);

\draw (A)--(B)--(C)--cycle;
\draw (D)--(E)--(F)--cycle;
\draw (F)--(G)--(H)--cycle;

\draw (A) node[below] {$v_1$};
\draw (B) node[right] {$v_2$};
\draw (C) node[left] {$v_3$};
\draw (D) node[right] {$v_4$};
\draw (E) node[right] {$v_5$};
\draw (F) node[left] {$v_6$};

\draw (G) node[right] {$v_7$};
\draw (H) node[above] {$v_8$};

\end{tikzpicture}
\end{center}
There is the extra three-dimensional cone 
$\langle v_6,v_7,v_8\rangle$. 

We obtain the sequence 
\[
Z_{12}\stackrel{\psi}{\dasharrow} X'\stackrel{\varphi_1}{\longrightarrow} Y_1\stackrel{\varphi_2}{\longrightarrow} Y_2
\]
of birational maps associated to the diagrams 
\begin{center}
\begin{tikzpicture}
\coordinate (B) at (4*0.5*2/3,3.8*0.5*2/3);
\coordinate (C) at (2.6*0.5*2/3,1.5*0.5*2/3);
\coordinate (A) at (5.4*0.5*2/3,1.5*0.5*2/3);
\coordinate (H) at (2*2/3,6.8*0.5*2/3+1.15*2/3);
\coordinate (F) at (0,0);
\coordinate (E) at (4*0.5*2/3,6.8*0.5*2/3);
\coordinate (D) at (8*0.5*2/3,0);
\coordinate (G) at (4*2/3+2*0.5*2/3,-1.14*0.5*2/3);

\draw (B)--(H);
\draw (A)--(G);
\draw (C)--(F);

\draw (C)--(E);
\draw [very thick] (B)--(D);
\draw (A)--(F);

\draw (E)--(G);

\draw (A)--(B)--(C)--cycle;
\draw (D)--(E)--(F)--cycle;
\draw (F)--(G)--(H)--cycle;

\draw (10/3,4/3) node[right] {$\dashrightarrow$};

\end{tikzpicture}
\begin{tikzpicture}
\coordinate (B) at (4*0.5*2/3,3.8*0.5*2/3);
\coordinate (C) at (2.6*0.5*2/3,1.5*0.5*2/3);
\coordinate (A) at (5.4*0.5*2/3,1.5*0.5*2/3);
\coordinate (H) at (2*2/3,6.8*0.5*2/3+1.15*2/3);
\coordinate (F) at (0,0);
\coordinate (E) at (4*0.5*2/3,6.8*0.5*2/3);
\coordinate (D) at (8*0.5*2/3,0);
\coordinate (G) at (4*2/3+2*0.5*2/3,-1.14*0.5*2/3);

\draw (B)--(H);
\draw (A)--(G);
\draw (C)--(F);

\draw (C)--(E);
\draw (A)--(E);
\draw (A)--(F);

\draw (E)--(G);

\draw (A)--(B)--(C)--cycle;
\draw (D)--(E)--(F)--cycle;
\draw (F)--(G)--(H)--cycle;

\fill[black] (D) circle (0.07);

\draw (10/3,4/3) node[right] {$\longrightarrow$};

\end{tikzpicture}
\begin{tikzpicture}
\coordinate (B) at (4*0.5*2/3,3.8*0.5*2/3);
\coordinate (C) at (2.6*0.5*2/3,1.5*0.5*2/3);
\coordinate (A) at (5.4*0.5*2/3,1.5*0.5*2/3);
\coordinate (H) at (2*2/3,6.8*0.5*2/3+1.15*2/3);
\coordinate (F) at (0,0);
\coordinate (E) at (4*0.5*2/3,6.8*0.5*2/3);
\coordinate (D) at (8*0.5*2/3,0);
\coordinate (G) at (4*2/3+2*0.5*2/3,-1.14*0.5*2/3);

\draw (B)--(H);
\draw (A)--(G);
\draw (C)--(F);

\draw (C)--(E);
\draw (A)--(E);
\draw (A)--(F);

\draw (E)--(G);

\draw (A)--(B)--(C)--cycle;
\draw (E)--(F);
\draw (F)--(G)--(H)--cycle;

\fill[black] (G) circle (0.07);

\draw (10/3,4/3) node[right] {$\longrightarrow$};

\end{tikzpicture}
\begin{tikzpicture}
\coordinate (B) at (4*0.5*2/3,3.8*0.5*2/3);
\coordinate (C) at (2.6*0.5*2/3,1.5*0.5*2/3);
\coordinate (A) at (5.4*0.5*2/3,1.5*0.5*2/3);
\coordinate (H) at (2*2/3,6.8*0.5*2/3+1.15*2/3);
\coordinate (F) at (0,0);
\coordinate (E) at (4*0.5*2/3,6.8*0.5*2/3);
\coordinate (D) at (8*0.5*2/3,0);
\coordinate (G) at (4*2/3+2*0.5*2/3,-1.14*0.5*2/3);

\draw (B)--(H);

\draw (C)--(F);

\draw (C)--(E);
\draw (A)--(E);
\draw (A)--(F);

\draw (A)--(B)--(C)--cycle;
\draw (E)--(F);
\draw (F)--(A)--(H)--cycle;

\draw [opacity=0] (0,-1.14/3)--(0,-1.14/3);

\end{tikzpicture}
\end{center}
of fans in $\mathbb{R}^3$. The rational map 
$\psi$ is the flop associated to the primitive relation 
\[
v_1+v_5=v_2+v_4,
\]
that is, the fan corresponding to $X'$ is obtained from $\Sigma$ 
by removing $\<v_2,v_4\>$ and by adding $\<v_1,v_5\>$. 
The morphisms 
$\varphi_1$ and $\varphi_2$ are associated to 
the star subdivisions along 
\[
v_4=v_1+v_7\mbox{ and }
v_7=v_1+v_8
\] 
of fans, respectively. 
Namely, $\psi$  is the flop along $V(\langle v_2,v_4\rangle)$, 
$\varphi_1$ is the blow-up of $Y_1$ along the curve $V(\langle v_1,v_7\rangle)$ and 
$\varphi_2$ is the blow-up of $Y_2$ along the curve $V(\langle v_1,v_8\rangle)$. 
Since $Y_2$ is smooth and its Picard number is three, 
$Y_2$, $Y_1$ and $X'$ are projective (see Remark \ref{p-rem1.6}). 
Conjecture \ref{p-conj1.4} is true for this case. 

\begin{rem}\label{p-rem4.5}
The variety $Z_{12}$ is studied in \cite[Example 1]{fujino-payne}, 
which is the first example of smooth complete 
toric threefolds with Picard number five and 
no non-trivial nef line bundles. 
In particular, it is non-projective. 
If we put $n=(-1, -1, -1)$, 
$n'=(1, 0, 0)$, and $n''=(0, 0, 1)$, 
then $Z_{12}$ coincides with [8-12] in \cite[p.79]{oda1}. Also, 
there is another description for $Z_{12}$ in \cite[Example 2.2.8]{fujino-foundations}. 
\end{rem}


\noindent
{\bf [8-13$'$]}\quad 
Let $a,b\in\mathbb{Z}$ and $Z'_{13}(a,b):=X_\Sigma$ 
the smooth complete toric threefold associated to 
the fan $\Sigma$ in $\mathbb{R}^3$ whose one-dimensional cones are 
generated by 
\[
v_1 = (-1,b,0),   v_2 = (0,-1,0),  v_3 = (1,-1,0), 
v_4 = (-1,0,-1), 
\]
\[
v_5 = (0,0,-1),  v_6 = (0,1,0), 
v_7 = (0,0,1),  v_8 = (1,0,a),
\]
and the three-dimensional cones of $\Sigma$ are 
\[
\<v_1, v_2, v_4\>,  \<v_2, v_3, v_4\>,  \<v_3, v_4, v_5\>,  \<v_4, v_5, v_6 \>,
\<v_1, v_4, v_6\>,  \<v_1, v_6, v_7\>, 
\]
\[
 \<v_1, v_2, v_7\>,  \<v_2, v_3, v_7\>, 
\<v_3, v_5, v_8\>,  \<v_5, v_6, v_8\>,  \<v_3, v_7, v_8\>,  \<v_6, v_7, v_8\>.
\]
The configuration of cones of $\Sigma$ are as the following diagram: 
\begin{center}
\begin{tikzpicture}

\coordinate (E) at (1.2,1.2);
\coordinate (C) at (0,0);
\coordinate (B) at (4,0);
\coordinate (D) at (2.8,1.2);
\coordinate (F) at (2.8,2.8);
\coordinate (H) at (1.2,2.8);
\coordinate (G) at (0,4);
\coordinate (A) at (4,4);

\draw (E)--(F);

\draw (A)--(H);
\draw (B)--(F);
\draw (C)--(D);
\draw (E)--(G);

\draw (H)--(G);
\draw (F)--(A);
\draw (B)--(D);
\draw (C)--(E);

\draw (C)--(B)--(A)--(G)--cycle;
\draw (H)--(E)--(D)--(F)--cycle;

\draw (A) node[right] {$v_8$};
\draw (B) node[right] {$v_7$};
\draw (C) node[left] {$v_1$};
\draw (D) node[right] {$v_2$};
\draw (E) node[left] {$v_4$};
\draw (F) node[right] {$v_3$};

\draw (G) node[left] {$v_6$};
\draw (H) node[left] {$v_5$};

\end{tikzpicture}
\end{center}
In this case, there are two extra three-dimensional cones 
$\langle v_1,v_6,v_7\rangle$ and $\langle v_6,v_7,v_8\rangle$. 

We have primitive relations 
\[v_2+v_8=v_3+av_7 \quad \text{and}\quad  
v_3+v_6=av_5+v_8
\]
when $a>0$, and 
\[v_2+v_8=v_3+(-a)v_5 \quad \text{and}\quad  
v_3+v_6=(-a)v_7+v_8
\]
when $a<0$. 
Moreover, we have primitive relations 
\[v_1+v_5=v_4+bv_6 \quad \text{and}\quad  
v_4+v_7=v_1+bv_2
\]
when $b>0$, and 
\[v_1+v_5=(-b)v_2+v_4 \quad \text{and}\quad  
v_4+v_7=v_1+(-b)v_6
\]
when $b<0$. 
By the above primitive relations and Lemma \ref{p-lem2.9}, 
we can prove that 
there is no effective ample Cartier divisor on $Z'_{13}(a, b)$ when 
$ab\ne 0$. 
Therefore, $Z'_{13}(a, b)$ is not projective when $ab\ne 0$. 
When $a=0$, we have $v_3+v_6=v_8$. 
Therefore, we have a contraction 
$Z'_{13}(0, b)\to V_1$ by removing $v_8$. 
Similarly, when $b=0$, 
we have $v_4+v_7=v_1$. Then we have a contraction 
$Z'_{13}(a, 0)\to V_2$ by removing $v_1$. 
Note that $V_1$ and $V_2$ are smooth 
complete toric threefolds with Picard number four. 
They are not isomorphic to 
$W$ in Example \ref{p-ex4.1} since 
$v_2+v_6=v_5+v_7=0$. 
Therefore, $V_1$ and $V_2$ are both projective. 
This implies that $Z'_{13}
(a, b)$ is projective when $ab=0$. 

Let $Z'_{13}(a,b)\dashrightarrow X'$ be the flop 
along $V(\langle v_3,v_4\rangle)$, that is, there is a primitive 
relation 
\[
v_2+v_5=v_3+v_4.
\]
We can see that 
\[
X'=Z''_{13}(-1,a,b-1,-1)
\]
in [8-13$''$] below 
by the automorphism
\[
(x,y,z)\mapsto (x,x+y,z)
\]
of $\mathbb{R}^3$, and 
by the changing 
\[
1\mapsto8,2\mapsto 3,3\mapsto 2,4\mapsto 5,5\mapsto 4,8\mapsto 1
\]
of indices of $v_i$'s. Since Conjecture \ref{p-conj1.4} holds for $X'$ by the argument below, 
it holds for $Z'_{13}(a,b)$, too. 

\begin{rem}\label{p-rem4.6}
The variety $Z'_{13}(a,b)$ is studied in \cite[Example 3]{fujino-payne}. 
Our description here coincides with 
[8-13$'$] in \cite[p.79]{oda1} if 
we put $n=(1, -1, 0)$, $n'=(0, 1, 0)$, and $n''=(0, 0, 1)$. 
\end{rem}

\begin{rem}\label{p-rem4.7} The reader can find some 
related topics on [8-13$'$] in \cite[4.1.2]{bonavero}, 
where the toric variety 
$Z'_{13}(a\pm 1, b\pm 1)$ can be obtained from $Z'_{13}(a, b)$ by 
an elementary transformation (see \cite[Proposition 4]{bonavero}). 
\end{rem}


\noindent
{\bf [8-13$''$]}\quad 
Let $a,b,c,d\in\mathbb{Z}$ and $Z''_{13}(a,b,c,d):=X_\Sigma$ the smooth complete toric threefold associated to 
the fan $\Sigma$ in $\mathbb{R}^3$ whose one-dimensional cones are 
generated by 
\[
v_1 = (1,1,b),  v_2 = (1,0,0), 
v_3 = (0,-1,0), v_4 = (0,0,-1), 
\]
\[
v_5 = (-1,a,d),  v_6 = (0,1,0), 
v_7 = (0,0,1),  v_8 = (-1,c,d+1),
\]
and the three-dimensional cones of $\Sigma$ are 
\[
\<v_1, v_2, v_4\>,  \<v_2, v_3, v_4\>,  \<v_3, v_4, v_5\>,  \<v_4, v_5, v_6 \>,
\<v_1, v_4, v_6\>,  \<v_1, v_6, v_7\>, 
\]
\[\<v_1, v_2, v_7\>,  \<v_2, v_3, v_7\>, 
\<v_3, v_5, v_8\>,  \<v_5, v_6, v_8\>,  \<v_3, v_7, v_8\>,  \<v_6, v_7, v_8\>.
\]
The configuration of cones of $\Sigma$ are as the following diagram: 
\begin{center}
\begin{tikzpicture}

\coordinate (A) at (1.2,1.2);
\coordinate (B) at (0,0);
\coordinate (C) at (4,0);
\coordinate (D) at (2.8,1.2);
\coordinate (E) at (2.8,2.8);
\coordinate (F) at (1.2,2.8);
\coordinate (G) at (0,4);
\coordinate (H) at (4,4);

\draw (D)--(F);

\draw (F)--(G);
\draw (E)--(H);
\draw (D)--(C);
\draw (A)--(B);

\draw (A)--(G);
\draw (F)--(H);
\draw (E)--(C);
\draw (D)--(B);

\draw (B)--(C)--(H)--(G)--cycle;
\draw (A)--(D)--(E)--(F)--cycle;

\draw (A) node[left] {$v_1$};
\draw (B) node[left] {$v_2$};
\draw (C) node[right] {$v_3$};
\draw (D) node[right] {$v_4$};
\draw (E) node[right] {$v_5$};
\draw (F) node[left] {$v_6$};

\draw (G) node[left] {$v_7$};
\draw (H) node[right] {$v_8$};

\end{tikzpicture}
\end{center}
In this case, there are two extra three-dimensional cones 
$\langle v_2,v_3,v_7\rangle$ and $\langle v_3,v_7,v_8\rangle$. 

We have primitive relations 
\[v_1+v_3=v_2+bv_7 \quad \text{and}\quad  
v_2+v_6=v_1+bv_4
\]
when $b>0$, and 
\[v_1+v_3=v_2+(-b)v_4 \quad \text{and}\quad  
v_2+v_6=v_1+(-b)v_7
\]
when $b<0$. 
Moreover, we have primitive relations 
\[v_4+v_8=v_5+(a-c)v_3 \quad \text{and}\quad  
v_5+v_7=v_8+(a-c)v_6
\]
when $a>c$, and 
\[v_4+v_8=v_5+(c-a)v_6 \quad \text{and}\quad  
v_5+v_7=v_8+(c-a)v_3
\]
when $a<c$.
By the above primitive relations and Lemma \ref{p-lem2.9}, we can prove that 
there exists no effective ample Cartier 
divisor on $Z''_{13}(a, b, c, d)$ if $a\ne c$ and $b\ne 0$. 
This means that $Z''_{13}(a, b, c, d)$ is not projective 
when $a\ne c$ and $b\ne 0$. 

Since $v_3+v_6=v_4+v_7=0$ holds, 
there exists a morphism $Z''_{13}(a,b,c,d)\to \mathbb{P}^1$ 
with the general fiber $\mathbb{P}^1\times\mathbb{P}^1$ 
associated to 
the first projection $\mathbb{R}^3\ni (x,y,z)\mapsto x\in\mathbb{R}$. 
The followings are the pictures for two sub-fans $\Sigma_+\subset\Sigma$ in 
$\{(x,y,z)\,|\,x\ge 0\}\subset\mathbb{R}^3$ and 
$\Sigma_-\subset\Sigma$ in 
$\{(x,y,z)\,|\,x\le 0\}\subset\mathbb{R}^3$: 
\begin{center}
\begin{tikzpicture}

\coordinate (C) at (0,4*2/3);
\coordinate (D) at (0,0);

\coordinate (A) at (2.8*2/3,1.2*2/3);

\coordinate (F) at (4*2/3,0);
\coordinate (G) at (4*2/3,4*2/3);

\coordinate (B) at (1.2*2/3,2.8*2/3);

\draw (D)--(A);
\draw (D)--(B);

\draw (G)--(A);
\draw (G)--(B);

\draw (C)--(B);

\draw (A)--(B);
\draw (A)--(F);

\draw (C)--(D)--(F)--(G)--cycle;

\draw (C) node[left] {$v_3$};
\draw (D) node[left] {$v_4$};
\draw (G) node[right] {$v_7$};
\draw (F) node[right] {$v_6$};

\draw (A) node[right] {$v_1$};
\draw (B) node[left] {$v_2$};

\draw (2*2/3,-0.5) node {$\Sigma_+$};

\end{tikzpicture}
\quad \quad
\begin{tikzpicture}

\coordinate (C) at (0,4*2/3);
\coordinate (D) at (0,0);

\coordinate (E) at (1.2*2/3,1.2*2/3);

\coordinate (F) at (4*2/3,0);
\coordinate (G) at (4*2/3,4*2/3);

\coordinate (H) at (2.8*2/3,2.8*2/3);

\draw (C)--(E);
\draw (C)--(H);

\draw (F)--(E);
\draw (F)--(H);

\draw (D)--(E);

\draw (E)--(H);
\draw (H)--(G);

\draw (C)--(D)--(F)--(G)--cycle;

\draw (C) node[left] {$v_3$};
\draw (D) node[left] {$v_4$};
\draw (E) node[left] {$v_5$};
\draw (F) node[right] {$v_6$};

\draw (G) node[right] {$v_7$};
\draw (H) node[right] {$v_8$};

\draw (2*2/3,-0.5) node {$\Sigma_-$};

\end{tikzpicture}
\end{center}
For $\Sigma_+$, we have the primitive relation:
\[
v_2+v_6=
\left\{
\begin{array}{ccc}
v_1 &\cdots & b= 0\\
v_1+bv_4 & \cdots & b>0\\
v_1+(-b)v_7 & \cdots & b<0
\end{array}
\right.
\]
We remark that $Z''_{13}(a,0,c,d)$ 
can be blown-down to a smooth 
threefold $V$. It is not difficult to 
see that $V$ is not isomorphic to 
$W$ in Example \ref{p-ex4.1} since $v_3+v_6=v_4+v_7=0$. 
Hence $V$ is projective. 
Therefore, $Z''_{13}(a,0,c,d)$ itself is projective. 
So, let $b>0$. Then, after the 
anti-flip (flop if $b=1$) $Z''_{13}(a,b,c,d)\dashrightarrow X'$ 
along $V(\langle v_1,v_4\rangle)$, 
there exists the relation $v_2+v_6+bv_7=v_1$. 
Thus, we obtain the divisorial 
contraction $X'\to Y$ by removing the one-dimensional cone 
generated by $v_1$. 
Note that $Y$ is smooth. 
The case where $b<0$ is completely similar. 

On the other hand, 
for $\Sigma_-$, we have the primitive relation:
\[
v_5+v_7=
\left\{
\begin{array}{ccc}
v_8 &\cdots & a= c\\
v_8+(a-c)v_6 & \cdots & a>c\\
v_8+(c-a)v_3 & \cdots & a<c
\end{array}
\right.
\]
So, we can do the same operation as $\Sigma_+$ for 
the other side $\Sigma_-$ independently. 
It can be summarized as follows. 
If $a-c=0$, then we can remove $v_8$, which is a 
divisorial contraction to a smooth threefold. Then we can check that 
$Z''_{13}(a, b, a, d)$ is projective as in the case where $b=0$. 
If $a-c=\pm 1$, then 
we can remove $v_8$ after a flop. If $a-c\ne 0, \pm 1$, 
then we can remove $v_8$ after an anti-flip. 

Thus, if $b\ne 0$ and $a-c\ne 0$, then $Z''_{13}$ is not 
projective and we have the sequence
\[
Z''_{13}\stackrel{\psi'}{\dashrightarrow} X'
\stackrel{\psi''}{\dashrightarrow} X''\stackrel{\varphi_1}{\longrightarrow}
 Y_1\stackrel{\varphi_2}{\longrightarrow} Y_2
\]  
of complete toric threefolds, where $\psi',\psi''$ are anti-flips (or flops) and 
$\varphi_1,\varphi_2$ are divisorial contractions which contract 
a divisor to a smooth point. 
Since $Y_2$ is a smooth complete toric threefold of 
Picard number three, $Y_2$, $Y_1$ and $X''$ are projective. 
So, Conjecture \ref{p-conj1.4} holds for $Z''_{13}$. 

\bigskip

\noindent
{\bf [8-14$'$]}\quad 
Let $a\in\mathbb{Z}$ and $Z'_{14}(a):=X_\Sigma$ the smooth complete toric threefold associated to 
the fan $\Sigma$ in $\mathbb{R}^3$ whose one-dimensional cones are 
generated by 
\[
v_1 = (0,0,-1),  v_2 = (1,0,0), 
v_3 = (0,1,0), v_4 = (-1,-1,a), 
\]
\[
v_5 = (-1,-1,a+1),  v_6 = (1,0,1), 
v_7 = (0,0,1),  v_8 = (0,1,1),
\]
and the three-dimensional cones of $\Sigma$ are 
\[
\<v_1, v_2, v_3\>,  \<v_1, v_3, v_4\>,  \<v_1, v_2, v_4\>,  \<v_3, v_4, v_5\>, 
\<v_4, v_5, v_6\>,  \<v_2, v_4, v_6\>, 
\]
\[
 \<v_5, v_6, v_7\>,  \<v_2, v_3, v_8\>, 
\<v_3, v_5, v_8\>,  \<v_5, v_7, v_8\>,  \<v_6, v_7, v_8\>,  \<v_2, v_6, v_8\>.
\]
The configuration of cones of $\Sigma$ are as the following diagram: 
\begin{center}
\begin{tikzpicture}

\coordinate (C) at (4*0.5,3.8*0.5);
\coordinate (B) at (2.6*0.5,1.5*0.5);
\coordinate (D) at (5.4*0.5,1.5*0.5);
\coordinate (G) at (2,6.8*0.5+1.15);
\coordinate (F) at (0,0);
\coordinate (H) at (4*0.5,6.8*0.5);
\coordinate (E) at (8*0.5,0);
\coordinate (A) at (2,2.3*0.5);

\draw (A)--(G);
\draw (A)--(E);
\draw (A)--(F);

\draw (B)--(H);
\draw (C)--(E);
\draw (D)--(F);

\draw (B)--(C)--(D)--cycle;
\draw (E)--(F)--(H)--cycle;
\draw (E)--(F)--(G)--cycle;

\draw (A) node[right] {$v_1$};
\draw (B) node[left] {$v_2$};
\draw (C) node[right] {$v_3$};
\draw (D) node[below] {$v_4$};
\draw (E) node[right] {$v_5$};
\draw (F) node[left] {$v_6$};

\draw (G) node[above] {$v_7$};
\draw (H) node[right] {$v_8$};

\end{tikzpicture}
\end{center}
There is the extra three-dimensional cone 
$\langle v_5,v_6,v_7\rangle$. 

We have primitive relations 
\[v_2+v_5=v_4+v_6, \quad 
v_3+v_6=v_2+v_8, \quad \text{and} 
\quad
v_4+v_8=v_3+v_5. 
\] 
This implies that $Z'_{14}(a)$ is non-projective 
by the above primitive relations and Lemma \ref{p-lem2.9}. 

We obtain the sequence 
\[
Z'_{14}(a)\stackrel{\psi}{\dasharrow} X'\stackrel{\varphi_1}{\longrightarrow} Y_1\stackrel{\varphi_2}{\longrightarrow} Y_2
\]
of birational maps associated to the diagrams 
\begin{center}
\begin{tikzpicture}
\coordinate (C) at (4*0.5*2/3,3.8*0.5*2/3);
\coordinate (B) at (2.6*0.5*2/3,1.5*0.5*2/3);
\coordinate (D) at (5.4*0.5*2/3,1.5*0.5*2/3);
\coordinate (G) at (2*2/3,6.8*0.5*2/3+1.15*2/3);
\coordinate (F) at (0,0);
\coordinate (H) at (4*0.5*2/3,6.8*0.5*2/3);
\coordinate (E) at (8*0.5*2/3,0);
\coordinate (A) at (2*2/3,2.3*0.5*2/3);

\draw (A)--(G);
\draw (A)--(E);
\draw (A)--(F);

\draw [very thick] (B)--(H);
\draw (C)--(E);
\draw (D)--(F);

\draw (B)--(C)--(D)--cycle;
\draw (E)--(F)--(H)--cycle;
\draw (E)--(F)--(G)--cycle;

\draw (8*0.5*2/3,4/3) node[right] {$\dashrightarrow$};

\end{tikzpicture}
\begin{tikzpicture}
\coordinate (C) at (4*0.5*2/3,3.8*0.5*2/3);
\coordinate (B) at (2.6*0.5*2/3,1.5*0.5*2/3);
\coordinate (D) at (5.4*0.5*2/3,1.5*0.5*2/3);
\coordinate (G) at (2*2/3,6.8*0.5*2/3+1.15*2/3);
\coordinate (F) at (0,0);
\coordinate (H) at (4*0.5*2/3,6.8*0.5*2/3);
\coordinate (E) at (8*0.5*2/3,0);
\coordinate (A) at (2*2/3,2.3*0.5*2/3);

\draw (A)--(G);
\draw (A)--(E);
\draw (A)--(F);

\draw (C)--(F);
\draw (C)--(E);
\draw (D)--(F);

\draw (B)--(C)--(D)--cycle;
\draw (E)--(F)--(H)--cycle;
\draw (E)--(F)--(G)--cycle;

\fill[black] (H) circle (0.07);

\draw (8*0.5*2/3,4/3) node[right] {$\longrightarrow$};

\end{tikzpicture}
\begin{tikzpicture}
\coordinate (C) at (4*0.5*2/3,3.8*0.5*2/3);
\coordinate (B) at (2.6*0.5*2/3,1.5*0.5*2/3);
\coordinate (D) at (5.4*0.5*2/3,1.5*0.5*2/3);
\coordinate (G) at (2*2/3,6.8*0.5*2/3+1.15*2/3);
\coordinate (F) at (0,0);
\coordinate (H) at (4*0.5*2/3,6.8*0.5*2/3);
\coordinate (E) at (8*0.5*2/3,0);
\coordinate (A) at (2*2/3,2.3*0.5*2/3);

\draw (A)--(G);
\draw (A)--(E);
\draw (A)--(F);

\draw (C)--(F);
\draw (C)--(E);
\draw (D)--(F);

\draw (B)--(C)--(D)--cycle;
\draw (E)--(F);
\draw (E)--(F)--(G)--cycle;

\fill[black] (B) circle (0.07);

\draw (8*0.5*2/3,4/3) node[right] {$\longrightarrow$};

\end{tikzpicture}
\begin{tikzpicture}
\coordinate (C) at (4*0.5*2/3,3.8*0.5*2/3);
\coordinate (B) at (2.6*0.5*2/3,1.5*0.5*2/3);
\coordinate (D) at (5.4*0.5*2/3,1.5*0.5*2/3);
\coordinate (G) at (2*2/3,6.8*0.5*2/3+1.15*2/3);
\coordinate (F) at (0,0);
\coordinate (H) at (4*0.5*2/3,6.8*0.5*2/3);
\coordinate (E) at (8*0.5*2/3,0);
\coordinate (A) at (2*2/3,2.3*0.5*2/3);

\draw (A)--(G);
\draw (A)--(E);
\draw (A)--(F);

\draw (C)--(F);
\draw (C)--(E);
\draw (D)--(F);

\draw (C)--(D);
\draw (E)--(F);
\draw (E)--(F)--(G)--cycle;

\end{tikzpicture}
\end{center}
of fans in $\mathbb{R}^3$. The rational map 
$\psi$ is the flop associated to the primitive relation 
\[
v_3+v_6=v_2+v_8,
\]
that is, the fan corresponding to $X'$ is obtained from $\Sigma$ 
by removing $\<v_2,v_8\>$ and by adding $\<v_3,v_6\>$. 
The morphisms 
$\varphi_1$ and $\varphi_2$ are associated to 
the star subdivisions along 
\[
v_8=v_3+v_7\mbox{ and }
v_2=v_1+v_6
\] 
of fans, respectively. 
Namely, $\psi$  is the flop along $V(\langle v_2,v_8\rangle)$, 
$\varphi_1$ is the blow-up of $Y_1$ along the curve $V(\langle v_3,v_7\rangle)$ and 
$\varphi_2$ is the blow-up of $Y_2$ along the curve $V(\langle v_1,v_6\rangle)$. 
Since $Y_2$ is smooth and its Picard number is three, 
$Y_2$, $Y_1$ and $X'$ are projective (see Remark \ref{p-rem1.6}). 
Conjecture \ref{p-conj1.4} is true for this case. 

\bigskip

\noindent
{\bf [8-14$''$]}\quad 
Let $a,b\in\mathbb{Z}$ and $Z''_{14}(a,b):=X_\Sigma$ the smooth complete toric threefold associated to 
the fan $\Sigma$ in $\mathbb{R}^3$ whose one-dimensional cones are 
generated by 
\[
v_1 = (-1,a,b),  v_2 = (0,1,0), 
v_3 = (0,-1,-1), v_4 = (0,0,1), 
\]
\[
v_5 = (1,0,1),  v_6 = (1,1,0), 
v_7 = (1,0,0),  v_8 = (1,-1,-1),
\]
and the three-dimensional cones of $\Sigma$ are 
\[
\<v_1, v_2, v_3\>,  \<v_1, v_3, v_4\>,  \<v_1, v_2, v_4\>,  \<v_3, v_4, v_5\>, 
\<v_4, v_5, v_6\>,  \<v_2, v_4, v_6\>, 
\]
\[
 \<v_5, v_6, v_7\>,  \<v_2, v_3, v_8\>, 
\<v_3, v_5, v_8\>,  \<v_5, v_7, v_8\>,  \<v_6, v_7, v_8\>,  \<v_2, v_6, v_8\>.
\]
The configuration of cones of $\Sigma$ are as the following diagram: 
\begin{center}
\begin{tikzpicture}

\coordinate (D) at (4*0.5,3.8*0.5);
\coordinate (C) at (2.6*0.5,1.5*0.5);
\coordinate (B) at (5.4*0.5,1.5*0.5);
\coordinate (G) at (2,6.8*0.5+1.15);
\coordinate (H) at (0,0);
\coordinate (E) at (4*0.5,6.8*0.5);
\coordinate (F) at (8*0.5,0);
\coordinate (A) at (2,2.3*0.5);

\draw (A)--(F);
\draw (A)--(G);
\draw (A)--(H);

\draw (B)--(H);
\draw (C)--(E);
\draw (D)--(F);

\draw (B)--(C)--(D)--cycle;
\draw (E)--(F)--(H)--cycle;
\draw (F)--(G)--(H)--cycle;

\draw (A) node[right] {$v_1$};
\draw (B) node[below] {$v_2$};
\draw (C) node[left] {$v_3$};
\draw (D) node[right] {$v_4$};
\draw (E) node[right] {$v_5$};
\draw (F) node[right] {$v_6$};

\draw (G) node[above] {$v_7$};
\draw (H) node[left] {$v_8$};

\end{tikzpicture}
\end{center}
There is the extra three-dimensional cone 
$\langle v_6,v_7,v_8\rangle$. 

We have primitive relations 
\[v_2+v_5=v_4+v_6, \quad 
v_3+v_6=v_2+v_8, \quad \text{and} 
\quad
v_4+v_8=v_3+v_5. 
\] 
Hence $Z''_{14}(a, b)$ is not projective 
by the above primitive relations and Lemma \ref{p-lem2.9}. 

We obtain the sequence 
\[
Z''_{14}(a,b)\stackrel{\psi}{\dasharrow} X'\stackrel{\varphi_1}{\longrightarrow} Y_1\stackrel{\varphi_2}{\longrightarrow} Y_2
\]
of birational maps associated to the diagrams 
\begin{center}
\begin{tikzpicture}
\coordinate (D) at (4*0.5*2/3,3.8*0.5*2/3);
\coordinate (C) at (2.6*0.5*2/3,1.5*0.5*2/3);
\coordinate (B) at (5.4*0.5*2/3,1.5*0.5*2/3);
\coordinate (G) at (2*2/3,6.8*0.5*2/3+1.15*2/3);
\coordinate (H) at (0,0);
\coordinate (E) at (4*0.5*2/3,6.8*0.5*2/3);
\coordinate (F) at (8*0.5*2/3,0);
\coordinate (A) at (2*2/3,2.3*0.5*2/3);

\draw (A)--(F);
\draw (A)--(G);
\draw (A)--(H);

\draw (B)--(H);
\draw [very thick] (C)--(E);
\draw (D)--(F);

\draw (B)--(C)--(D)--cycle;
\draw (E)--(F)--(H)--cycle;
\draw (F)--(G)--(H)--cycle;

\draw (8*0.5*2/3,4/3) node[right] {$\dashrightarrow$};

\end{tikzpicture}
\begin{tikzpicture}
\coordinate (D) at (4*0.5*2/3,3.8*0.5*2/3);
\coordinate (C) at (2.6*0.5*2/3,1.5*0.5*2/3);
\coordinate (B) at (5.4*0.5*2/3,1.5*0.5*2/3);
\coordinate (G) at (2*2/3,6.8*0.5*2/3+1.15*2/3);
\coordinate (H) at (0,0);
\coordinate (E) at (4*0.5*2/3,6.8*0.5*2/3);
\coordinate (F) at (8*0.5*2/3,0);
\coordinate (A) at (2*2/3,2.3*0.5*2/3);

\draw (A)--(F);
\draw (A)--(G);
\draw (A)--(H);

\draw (B)--(H);
\draw (D)--(H);
\draw (D)--(F);

\draw (B)--(C)--(D)--cycle;
\draw (E)--(F)--(H)--cycle;
\draw (F)--(G)--(H)--cycle;

\fill[black] (E) circle (0.07);

\draw (8*0.5*2/3,4/3) node[right] {$\longrightarrow$};

\end{tikzpicture}
\begin{tikzpicture}
\coordinate (D) at (4*0.5*2/3,3.8*0.5*2/3);
\coordinate (C) at (2.6*0.5*2/3,1.5*0.5*2/3);
\coordinate (B) at (5.4*0.5*2/3,1.5*0.5*2/3);
\coordinate (G) at (2*2/3,6.8*0.5*2/3+1.15*2/3);
\coordinate (H) at (0,0);
\coordinate (E) at (4*0.5*2/3,6.8*0.5*2/3);
\coordinate (F) at (8*0.5*2/3,0);
\coordinate (A) at (2*2/3,2.3*0.5*2/3);

\draw (A)--(F);
\draw (A)--(G);
\draw (A)--(H);

\draw (B)--(H);
\draw (D)--(H);
\draw (D)--(F);

\draw (B)--(C)--(D)--cycle;
\draw (F)--(H);
\draw (F)--(G)--(H)--cycle;

\fill[black] (F) circle (0.07);

\draw (8*0.5*2/3,4/3) node[right] {$\longrightarrow$};

\end{tikzpicture}
\begin{tikzpicture}
\coordinate (D) at (4*0.5*2/3,3.8*0.5*2/3);
\coordinate (C) at (2.6*0.5*2/3,1.5*0.5*2/3);
\coordinate (B) at (5.4*0.5*2/3,1.5*0.5*2/3);
\coordinate (G) at (2*2/3,6.8*0.5*2/3+1.15*2/3);
\coordinate (H) at (0,0);
\coordinate (E) at (4*0.5*2/3,6.8*0.5*2/3);
\coordinate (F) at (8*0.5*2/3,0);
\coordinate (A) at (2*2/3,2.3*0.5*2/3);

\draw (A)--(B);
\draw (A)--(G);
\draw (A)--(H);

\draw (B)--(H);
\draw (D)--(H);

\draw (G)--(H);

\draw (B)--(C)--(D)--cycle;

\draw (B)--(G)--(H)--cycle;

\end{tikzpicture}
\end{center}
of fans in $\mathbb{R}^3$. The rational map 
$\psi$ is the flop associated to the primitive relation 
\[
v_4+v_8=v_3+v_5,
\]
that is, the fan corresponding to $X'$ is obtained from $\Sigma$ 
by removing $\<v_3,v_5\>$ and by adding $\<v_4,v_8\>$. 
The morphisms 
$\varphi_1$ and $\varphi_2$ are associated to 
the star subdivisions along 
\[
v_5=v_4+v_7\mbox{ and }
v_6=v_2+v_7
\] 
of fans, respectively. 
Namely, $\psi$  is the flop along $V(\langle v_3,v_5\rangle)$, 
$\varphi_1$ is the blow-up of $Y_1$ along the curve $V(\langle v_4,v_7\rangle)$ and 
$\varphi_2$ is the blow-up of $Y_2$ along the curve $V(\langle v_2,v_7\rangle)$. 
Since $Y_2$ is smooth and its Picard number is three, 
$Y_2$, $Y_1$ and $X'$ are projective (see Remark \ref{p-rem1.6}). 
Conjecture \ref{p-conj1.4} is true for this case. 

\begin{rem}\label{p-rem4.8}
We have to check the remaining case, that is, 
the blow-ups of the smooth non-projective complete toric 
threefold $W$ of Picard number four 
in Example \ref{p-ex4.1}. 
However, one can easily find a flop that makes the variety projective in this case. 
In fact, at least one of the three flopping curves on $W$ is preserved by any blow-up. 
\end{rem}

Thus, we have the following:

\begin{thm}\label{p-thm4.9}
Conjecture \ref{p-conj1.4} is true for any smooth complete toric threefold of 
Picard number at most five. 
\end{thm}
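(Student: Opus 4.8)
The plan is to argue by a case analysis on the Picard number $\rho(X)$, using Remark \ref{p-rem1.6} to dispose of small $\rho$ and Oda's classification in \cite{oda1} for $\rho=4$ and $\rho=5$. If $\rho(X)\le 3$, then $X$ is already projective, so Conjecture \ref{p-conj1.4} holds trivially with the empty sequence. If $\rho(X)=4$, I would recall that by \cite{oda1} the only non-projective example is the threefold $W$ of Example \ref{p-ex4.1}; its non-projectivity is witnessed by Lemma \ref{p-lem2.9} applied to the primitive relations of its three flopping curves, and performing any one of the three corresponding flops removes the obstruction and yields a projective threefold, so the conjecture holds with a length-one sequence.

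For $\rho(X)=5$ I would split into two cases. If $X$ admits a toric divisorial contraction $X\to X_0$ onto a smooth complete toric threefold $X_0$ of Picard number four, then $X$ is the blow-up of $X_0$ along a smooth torus-invariant center, equivalently a star subdivision of the fan of $X_0$ along a ray, so the contraction is projective by Lemma \ref{p-lem2.10}; hence if $X_0$ is projective so is $X$, and if not then $X_0\cong W$ and Remark \ref{p-rem4.8} shows that at least one of the three flopping curves of $W$ is preserved on $X$, so flopping that curve (still a flop by Remark \ref{p-rem2.4}) makes $X$ projective. If $X$ has no such contraction, then by \cite[Theorem 9.6]{oda1} it is one of the eleven types [8-2], [8-5$'$], [8-5$''$], [8-8], [8-10], [8-11], [8-12], [8-13$'$], [8-13$''$], [8-14$'$], [8-14$''$]; for [8-2], [8-10], [8-11] I would exhibit, as above, explicit sequences of projective divisorial contractions terminating at a $\mathbb{Q}$-factorial threefold of Picard number at most three, which is projective by Remark \ref{p-rem1.6}, and for the remaining eight types I would carry out the analysis of this section, producing in each case a finite sequence of flops and anti-flips followed by projective contractions down to a threefold of Picard number at most three.

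The hard part is the bookkeeping for the eight non-projective families of Picard number five, and within those the two parameter-dependent families [8-13$'$] and [8-13$''$]. For these a single flop does not suffice: one genuinely needs an anti-flip (a flop only at the boundary parameter values), and the fan must be analyzed through the two sub-fans $\Sigma_\pm$ on the two sides of the fibration over $\mathbb{P}^1$ induced by the first projection, contracting the exceptional divisors $V(\langle v_1\rangle)$ and $V(\langle v_8\rangle)$ independently after the appropriate anti-flip on each side; there is also the observation that a single flop carries [8-13$'$] onto a member of the family [8-13$''$], which reduces the former to the latter. Verifying via Lemma \ref{p-lem2.9} exactly when the starting variety is non-projective, so that a non-trivial operation is actually forced, and then checking that each combinatorial operation is a legitimate flop, anti-flip, or projective contraction, is the bulk of the work; the uniform strategy recorded in Remark \ref{p-rem4.3} — take a flop, then contract to a variety of small Picard number — handles everything else, and assembling the cases above completes the proof of Theorem \ref{p-thm1.5}, i.e. Theorem \ref{p-thm4.9}.
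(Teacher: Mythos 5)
Your proposal is correct and follows essentially the same route as the paper: reduce to Oda's classification, handle $\rho\le 3$ by projectivity, $\rho=4$ by a single flop on $W$, and $\rho=5$ by splitting into blow-ups of Picard-number-four threefolds (where Remark \ref{p-rem4.8} applies) versus the eleven non-blow-downable types, treating [8-2], [8-10], [8-11] by chains of projective contractions and the remaining eight by a flop or anti-flip followed by contractions, with the reduction of [8-13$'$] to [8-13$''$] and the two-sided $\Sigma_\pm$ analysis exactly as in Section \ref{p-sec4}. The only caveat is that your outline defers the explicit case-by-case verification, which is the actual content of the paper's proof.
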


\section{Proof of Theorem \ref{p-thm1.7}}\label{p-sec5}

In this final section, we prove Theorem \ref{p-thm1.7}. 
More precisely, we explicitly construct a smooth 
non-projective complete toric threefold satisfying the 
property in Theorem \ref{p-thm1.7}. 
In the following, we will use the notation in [8-13$''$]. 
Put $v_i=(x_i,y_i,z_i)$ for $1\le i\le 8$. 

\begin{lem}\label{p-lem5.1}
Let $\Delta$ be a non-singular complete fan whose one-dimensional 
cones are $\<v_1\>, \ldots, \<v_8\>$ in {\em{[8-13$''$]}}. 
Suppose that the following conditions hold:
\begin{enumerate}
\item[\em{(1)}]  
$a,c,d\not\in\{-2,-1,0,1\}$. 
\item[\em{(2)}] 
$c>a+1$. 
\item[\em{(3)}] 
For any $2\le i<j\le 8$, we have 
\[
b>
\left|
\pm 1
-
\begin{vmatrix}
y_i & z_i \\
y_j & z_j
\end{vmatrix}
+
\begin{vmatrix}
x_i & z_i \\
x_j & z_j
\end{vmatrix}
\right|.
\]
\end{enumerate}
Then $\Delta=\Sigma$ holds, where $\Sigma$ is the fan given in {\em{[8-13$''$]}}. 
\end{lem}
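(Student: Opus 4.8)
The plan is to reconstruct $\Delta$ purely combinatorially from its rays $\langle v_1\rangle,\dots,\langle v_8\rangle$. Recall that a non-singular complete fan is simplicial, is recovered from its set of maximal cones, has every two-dimensional cone as a face of exactly two maximal cones, has the link of every ray a cycle, and has every ray as a face of some maximal cone; moreover every maximal (three-dimensional) cone is spanned by a $\mathbb Z$-basis, so has determinant $\pm1$. Hence it suffices to pin down the link of each $\langle v_i\rangle$ in $\Delta$. We shall use throughout that $v_3+v_6=v_4+v_7=0$, so no cone of $\Delta$ contains $\{v_3,v_6\}$ or $\{v_4,v_7\}$; note also that condition (3) forces $b$ to be a large positive integer.

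First I would determine the link of $v_1=(1,1,b)$. Expanding along the first row gives $\det[v_1,v_i,v_j]=\begin{vmatrix}y_i&z_i\\y_j&z_j\end{vmatrix}-\begin{vmatrix}x_i&z_i\\x_j&z_j\end{vmatrix}+b\begin{vmatrix}x_i&y_i\\x_j&y_j\end{vmatrix}$. If the last minor $m$ is nonzero and $\det[v_1,v_i,v_j]=\pm1$, then $b|m|=\bigl|\pm1-\begin{vmatrix}y_i&z_i\\y_j&z_j\end{vmatrix}+\begin{vmatrix}x_i&z_i\\x_j&z_j\end{vmatrix}\bigr|<b$ by condition (3), which is impossible; hence every smooth cone $\langle v_1,v_i,v_j\rangle$ of $\Delta$ has $v_i,v_j$ with proportional images in the $(x,y)$-plane. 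Listing such pairs and discarding those with non-unit determinant (using $|a+1|,|c+1|\ge2$ from (1) and $a\ne c$ from (2)), only $\{v_2,v_4\},\{v_3,v_4\},\{v_4,v_6\},\{v_2,v_7\},\{v_3,v_7\},\{v_6,v_7\}$ survive. So the link of $v_1$ is a cycle in the complete bipartite graph on $\{v_4,v_7\}$ and $\{v_2,v_3,v_6\}$, hence a four-cycle omitting exactly one of $v_2,v_3,v_6$. A parallel computation --- smoothness of $\langle v_2,v_i,v_j\rangle$ with $v_2=(1,0,0)$ amounts to a $(y,z)$-minor being $\pm1$, and (1), (2) again eliminate all but a short list of pairs --- shows the link of $v_2$ is a four-cycle omitting one of $v_1,v_3,v_6$; similar determinant computations restrict the links of $v_3,v_4,v_6,v_7$.

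The crux is to eliminate the spurious configurations. If $v_1\not\sim v_2$, then the links of $v_1$ and $v_2$ are both forced to be the four-cycle $v_3v_4v_6v_7$, so the stars of $v_1,v_2,v_3,v_4,v_6,v_7$ already consist of eight maximal cones and also close up the links of all six of these rays; then $v_5$ and $v_8$ lie in no cone of $\Delta$, which is impossible. Hence $v_1\sim v_2$, so the link of $v_1$ omits $v_3$ or $v_6$, and likewise for $v_2$, leaving four combinations. In the two ``pure'' ones (both links omit $v_3$, or both omit $v_6$), the forced cones make the link of $v_4$ the triangle $v_1v_2v_6$, respectively $v_1v_2v_3$; but the images of these three rays in $\mathbb R^3/\mathbb R v_4$ lie in an open half-plane, so the corresponding two-dimensional cones cannot tile $\mathbb R^2$ --- contradiction. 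Of the two ``mixed'' combinations, one is precisely the configuration of $\Sigma$ (link of $v_1$ is $v_2v_4v_6v_7$, link of $v_2$ is $v_1v_4v_3v_7$); the other ($v_1$'s link omits $v_6$ and $v_2$'s link omits $v_3$) must be ruled out by a longer run of the same bookkeeping --- condition (2) forbids the edge $\{v_5,v_8\}$ at $v_4$, and tracing which two-dimensional cones remain open leaves no way to place maximal cones at $v_5$ and $v_8$ without again producing a triangular link confined to a half-plane or an orphaned ray. \emph{I expect this last mixed case to be the main obstacle}, since it needs genuinely global bookkeeping of the partially built fan rather than a single local contradiction.

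Finally, with the stars of $v_1$ and $v_2$ fixed, six of the twelve maximal cones of $\Sigma$ are in hand, and the remaining six involve only $v_3,v_4,v_5,v_6,v_7,v_8$. Running the analysis once more at $v_5$, $v_8$ and $v_3$ --- smoothness/determinant constraints together with (1), (2), and the relations $v_3+v_6=v_4+v_7=0$ --- forces them to be $\langle v_3,v_4,v_5\rangle,\langle v_4,v_5,v_6\rangle,\langle v_3,v_5,v_8\rangle,\langle v_5,v_6,v_8\rangle,\langle v_3,v_7,v_8\rangle,\langle v_6,v_7,v_8\rangle$. Therefore $\Delta=\Sigma$.
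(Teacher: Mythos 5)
Your opening reduction is sound and matches the paper's: condition (3) forces the $(x,y)$-minor to vanish for any smooth cone through $v_1$, conditions (1)--(2) cut the admissible edges of the link of $v_1$ down to the bipartite graph on $\{v_4,v_7\}\times\{v_2,v_3,v_6\}$, and the half-plane criterion you use to kill the two ``pure'' configurations is correct. But the proof is not complete, and you have flagged the hole yourself: the mixed case in which the link of $v_1$ omits $v_6$ and the link of $v_2$ omits $v_3$ is only \emph{asserted} to be eliminable by ``a longer run of the same bookkeeping.'' In that configuration the link of $v_4$ is forced to close up as a pentagon $v_3,v_1,v_2,v_6,w$ with $w\in\{v_5,v_8\}$, and the images of these five rays in $\mathbb{R}^3/\mathbb{R}v_4$ \emph{do} positively span the plane; so neither of your two contradiction mechanisms (a link confined to an open half-plane, or an orphaned ray) applies. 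The actual obstruction is that the cyclic order of the pentagon disagrees with the angular order of the images (equivalently, the $2$-cone $\langle v_1,v_3\rangle$ modulo $v_4$ contains the image of $v_2$), which is a different argument that you never set up. As it stands, the case analysis has an unclosed branch.

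The idea you are missing --- and which the paper uses to dispose of all of these branches at once --- is to exploit the linear relations among the generators. Since $v_2=v_1+v_3+bv_4$ with $b>0$, the unimodular cone $\langle v_1,v_3,v_4\rangle$ contains the ray $\langle v_2\rangle$ in its relative interior and therefore cannot belong to any fan having $\langle v_2\rangle$ as a one-dimensional cone. This single observation kills every configuration in which $v_3$ and $v_4$ are adjacent in the link of $v_1$, i.e.\ your case A and both cases where $v_1$'s link omits $v_2$ or omits $v_6$, leaving only the link $\{v_2,v_6\}\times\{v_4,v_7\}$ of $\Sigma$. The same device is needed in your final paragraph, which is also left as an assertion: the determinant criterion alone cannot choose between $\langle v_3,v_4,v_5\rangle$ and $\langle v_3,v_4,v_8\rangle$ (both have determinant $\pm1$); the paper decides this by the relation $(c-a)v_3+v_4+v_8=v_5$, which together with $c>a$ from condition (2) shows that $\langle v_3,v_4,v_8\rangle$ contains $\langle v_5\rangle$ in its interior and hence is excluded. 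Without these relation arguments, neither the $x\geq 0$ half nor the $x\leq 0$ half of the fan is actually pinned down.
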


\begin{rem}\label{p-rem5.2}
The condition $(3)$ is equivalent to the following explicit condition: 
\[
(3)'\ b>\max
\left\{
2,|a|,|a+2|,|c|,|c+2|,|d-1|,|d+2|,|ad+a-cd|,|ad+a-cd+2|
\right\}. 
\]
This can be done by simply calculating the condition $(3)$ for any 
$2\le i<j\le 8$. 
\end{rem}

\begin{proof}[Proof of Lemma \ref{p-lem5.1}]
If $\<v_1,v_i,v_j\>$ is a {\em non-singular} three-dimensional cone for some 
$2\le i<j\le 8$, then 
we have 
\[
\begin{vmatrix}
v_1 \\ v_i \\ v_j
\end{vmatrix}
=
\begin{vmatrix}
y_i & z_i \\
y_j & z_j
\end{vmatrix}
-
\begin{vmatrix}
x_i & z_i \\
x_j & z_j
\end{vmatrix}
+b
\begin{vmatrix}
x_i & y_i \\
x_j & y_j
\end{vmatrix}
=\pm 1
\]
\[
\Longleftrightarrow\quad 
b
\begin{vmatrix}
x_i & y_i \\
x_j & y_j
\end{vmatrix}
=
\pm 1
-
\begin{vmatrix}
y_i & z_i \\
y_j & z_j
\end{vmatrix}
+
\begin{vmatrix}
x_i & z_i \\
x_j & z_j
\end{vmatrix}. 
\]
If $x_iy_j-y_ix_j\neq 0$, then the inequality 
\[
|b|
\le
\left|
\pm 1
-
\begin{vmatrix}
y_i & z_i \\
y_j & z_j
\end{vmatrix}
+
\begin{vmatrix}
x_i & z_i \\
x_j & z_j
\end{vmatrix}
\right|
\]
contradicts the condition $(3)$. 
Therefore, we have 
\[
\begin{vmatrix}
x_i & y_i \\
x_j & y_j
\end{vmatrix}=
0. 
\] 
Thus, 
by easy calculations, 
either $4\in\{i,j\}$, $7\in\{i,j\}$ or $(i,j)=(3,6)$ holds 
by the assumptions 
$a, c\neq 0$ and $a\neq c$. 
Obviously, $\<v_1,v_3,v_6\>$ is not a three-dimensional cone. 
Since 
\[
\begin{vmatrix}
v_1 \\ v_4 \\ v_5
\end{vmatrix}
=a+1\neq\pm1,\ 
\begin{vmatrix}
v_1 \\ v_7 \\ v_5
\end{vmatrix}
=-(a+1)\neq\pm1,\ 
\begin{vmatrix}
v_1 \\ v_4 \\ v_8
\end{vmatrix}
=c+1\neq\pm1\mbox{ and }
\begin{vmatrix}
v_1 \\ v_7 \\ v_8
\end{vmatrix}
=-(c+1)\neq\pm1 
\]
hold by the assumptions 
$a\neq 0,-2$ and $c\neq 0,-2$, 
the remaining possibilities for a three-dimensional cone 
which contains $\<v_1\>$ as a face are
\[
\langle v_1, v_2, v_7\rangle,\ 
\langle v_1, v_3, v_7\rangle,\ 
\langle v_1, v_6, v_7\rangle,\ 
\langle v_1, v_2, v_4\rangle,\ 
\langle v_1, v_3, v_4\rangle\mbox{ and }
\langle v_1, v_4, v_6\rangle. 
\]
We remark that these six cones are contained in the half space defined 
by $x\ge 0$. 
Among them, we note that 
\[
\langle v_1, v_2, v_4\rangle,\ 
\langle v_1, v_3, v_4\rangle,\ 
\langle v_1, v_2, v_7\rangle\mbox{ and }
\langle v_1, v_3, v_7\rangle
\]
are contained in the half space 
defined by $x\geq y$, while 
$\langle v_1\rangle$, $\langle v_4\rangle$ and $\langle v_7\rangle$ 
are contained in the hyperplane defined by $x=y$. 
Therefore, 
$\langle v_1, v_4, v_6\rangle$ and 
$\langle v_1, v_6, v_7\rangle$ must be in $\Delta$. 
Since the relation $v_1+v_3+bv_4=v_2$ tells us that 
$\<v_1,v_3,v_4\>$ contains $\<v_2\>$ inside 
by the assumption $b>0$, 
the cone $\langle v_1, v_3, v_4\rangle$ is not contained in $\Delta$. 
Hence, we have $\langle v_1, v_2, v_4\rangle \in \Delta$. 
Suppose that $\<v_2,v_4,v_i\>\in\Delta$ is the another three-dimensional cone 
which contains $\<v_2,v_4\>$. Then, 
\[
\begin{vmatrix}
v_2 \\ v_4 \\ v_i
\end{vmatrix}
=y_i=\pm 1
\]
says that $i=3$ by the assumptions $a\neq\pm 1$ and $c\neq\pm 1$. 
Thus, $\langle v_2, v_3, v_4\rangle \in \Delta$. 
Moreover, suppose that $\<v_2,v_3,v_j\>\in\Delta$ is the another three-dimensional cone 
which contains $\<v_2,v_3\>$. Then, the equality 
\[
\begin{vmatrix}
v_2 \\ v_3 \\ v_j
\end{vmatrix}
=-z_j=\pm 1
\]
implies that $j=7$ by the assumptions $b\neq\pm 1$ and $d\neq-2,-1,0,1$. 
Then we obtain that 
$\langle v_2, v_3, v_7\rangle$. Also, one can easily see 
$\langle v_1, v_2, v_7\rangle$ must be in $\Delta$. 
Hence $\Delta$ coincides with $\Sigma$ in the half space defined by $x\geq 0$. 

From now, we see the half space defined by $x\leq 0$. 
By the assumption $c>a$, the relation 
\[
(c-a)v_3+v_4+v_8=v_5
\]
says that 
$\langle v_3, v_4, v_8\rangle$ contains $\langle v_5\rangle$ inside, and 
$\langle v_3, v_4, v_8\rangle \not \in \Delta$. 
Hence we have $\langle v_3, v_4, v_5\rangle \in \Delta$. 
Since $v_4+v_7=0$, the another three-dimensional cone in $\Delta$ 
which contains $\<v_4,v_5\>$ is either 
$\langle v_4, v_5, v_6\rangle$ or $\langle v_4, v_5, v_8\rangle$. 
However, 
\[
\begin{vmatrix}
v_4 \\ v_5 \\ v_8
\end{vmatrix}
=c-a\neq\pm 1
\]
by the assumption $c>a+ 1$. 
This implies $\langle v_4, v_5, v_6\rangle \in \Delta$. 
Finally, since also 
\[
\begin{vmatrix}
v_5 \\ v_7 \\ v_8
\end{vmatrix}
=c-a\neq\pm 1
\]
holds, we have $\langle v_5, v_7, v_8\rangle \not\in \Delta$. 
The remaining possibilities for a three-dimensional cone 
which contains $\<v_8\>$ as a face are 
\[
\langle v_3, v_5, v_8\rangle,\ 
\langle v_5, v_6, v_8\rangle,\ 
\langle v_3, v_7, v_8\rangle\mbox{ and }
\langle v_6, v_7, v_8\rangle, 
\]
because $v_3+v_6=0$. Actually, $\Delta$ must contain all of them. 
This means that $\Delta=\Sigma$ holds. 
We finish the proof. 
\end{proof}

\begin{rem}\label{p-rem5.3} 
The condition in Lemma \ref{p-lem5.1} is only a {\em sufficient} condition. In fact, 
for example, we see 
$\Delta=\Sigma$ for $(a,b,c,d)=(2,3,5,7)$, 
though the condition in Lemma \ref{p-lem5.1} is not satisfied. 
Moreover, for many other cases, one can confirm that 
$\Sigma$ is the unique non-singular complete fan whose 
one-dimensional cones are $\<v_1\>, \ldots, \<v_8\>$ in 
[8-13$''$] as the proof of Lemma \ref{p-lem5.1}. 
\end{rem} 

For example, $(a,b,c,d)=(2,7,4,2)$ satisfies the conditions in Lemma \ref{p-lem5.1} 
(see Remark \ref{p-rem5.2}, too). Thus, we obtain the following. 

\begin{cor}\label{p-cor5.4}
Let $Z''_{13}(2, 7, 4, 2)\dashrightarrow X'$ be a toric birational 
map which is an isomorphism in codimension one. 
Assume that $X'$ is projective. 
Then $X'$ is always singular. 
\end{cor}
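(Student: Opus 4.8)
The plan is to argue by contradiction: assuming that $X'$ is simultaneously projective and non-singular, I will show that the fan defining $X'$ must coincide with the fan $\Sigma$ of $Z''_{13}(2,7,4,2)$, contradicting the non-projectivity of the latter.

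First I would record the standard fact that a toric birational map $\phi\colon X\dashrightarrow X'$ between toric varieties which is an isomorphism in codimension one identifies the sets of rays of the two fans. Indeed, by hypothesis there is an isomorphism $U\simeq U'$ between open subsets $U\subset X$ and $U'\subset X'$ whose complements have codimension at least two. A torus-invariant prime divisor $D\subset X$ is not contained in the complement of $U$, hence meets $U$ in a dense open subset and is carried by $\phi$ onto a torus-invariant prime divisor $D'\subset X'$; moreover $D$ and $D'$ determine the same discrete valuation on the common function field $k(X)=k(X')$. Since the divisorial valuation attached to a torus-invariant prime divisor determines its primitive ray generator, the corresponding rays of $\Sigma$ and of the fan $\Sigma'$ of $X'$ coincide; running the same argument with $\phi^{-1}$ shows that $\Sigma$ and $\Sigma'$ have exactly the same one-dimensional cones. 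In our situation, these are $\langle v_1\rangle,\dots,\langle v_8\rangle$, the rays listed in \textbf{[8-13$''$]} for $(a,b,c,d)=(2,7,4,2)$.

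Next, if $X'$ is non-singular, then $\Sigma'$ is a non-singular complete fan (complete because $X'$ is projective, hence proper) whose one-dimensional cones are $\langle v_1\rangle,\dots,\langle v_8\rangle$. Here I would apply Lemma \ref{p-lem5.1}. Its hypotheses hold for $(a,b,c,d)=(2,7,4,2)$: indeed $a,c,d\notin\{-2,-1,0,1\}$, we have $c=4>3=a+1$, and, using the explicit form in Remark \ref{p-rem5.2}, $b=7>6=\max\{2,|a|,|a+2|,|c|,|c+2|,|d-1|,|d+2|,|ad+a-cd|,|ad+a-cd+2|\}$. Lemma \ref{p-lem5.1} then forces $\Sigma'=\Sigma$, and therefore $X'\simeq Z''_{13}(2,7,4,2)$.

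Finally I would invoke the non-projectivity of $Z''_{13}(2,7,4,2)$ established in the treatment of \textbf{[8-13$''$]}: since $a=2\neq 4=c$ and $b=7\neq 0$, the primitive relations recorded there together with Lemma \ref{p-lem2.9} show that $Z''_{13}(2,7,4,2)$ carries no effective ample Cartier divisor, so it is not projective. This contradicts the assumed projectivity of $X'$, and hence $X'$ must be singular. The argument has no real obstacle: all of the substantive work is carried out in Lemma \ref{p-lem5.1}, and the only two points requiring care are the numerical verification of its hypotheses for the tuple $(2,7,4,2)$ and the routine observation that an isomorphism in codimension one between toric varieties preserves the set of rays --- together these pin $X'$ down to the non-projective model $Z''_{13}(2,7,4,2)$ itself.
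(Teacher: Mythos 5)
Your proposal is correct and follows exactly the paper's argument: verify the hypotheses of Lemma \ref{p-lem5.1} for $(a,b,c,d)=(2,7,4,2)$, use the fact that an isomorphism in codimension one preserves the rays to force the fan of a smooth $X'$ to coincide with $\Sigma$, and contradict the non-projectivity of $Z''_{13}(2,7,4,2)$ established in Section \ref{p-sec4}. Your numerical check of condition $(3)'$ (maximum equal to $6<7=b$) agrees with the paper.
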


\begin{proof}
We have already proved that $Z''_{13}(2, 7, 4, 2)$ is non-projective 
in Section \ref{p-sec4}. 
Hence this follows from Lemma \ref{p-lem5.1}. 
\end{proof}

Note that $Z''_{13}(2, 7, 4, 2)$ is a desired smooth non-projective 
complete toric threefold with Picard number five in Theorem \ref{p-thm1.7}. 
Corollary \ref{p-cor5.4} says that 
we can not make $X'$ smooth even if $X$ is so 
in Theorem \ref{p-thm1.3}. 

\begin{rem}[see Remark \ref{p-rem2.4}]\label{p-rem5.5}
Since $Z''_{13}(2, 7, 4, 2)$ is a smooth toric threefold, 
it has no flipping contractions (see \cite{fstu}). Moreover, 
by Lemma \ref{p-lem5.1}, it has 
no flopping contractions since every three-dimensional 
toric flop preserves smoothness. 
\end{rem}

\end{document}